\documentclass[12pt,a4paper]{amsart}

\newtheorem{theorem}{Theorem}[section]
\newtheorem{corollary}[theorem]{Corollary}
\newtheorem{lemma}[theorem]{Lemma}
\newtheorem{proposition}[theorem]{Proposition}
\theoremstyle{definition}
\newtheorem{definition}[theorem]{Definition}
\newtheorem{example}[theorem]{Example}
\newtheorem{remark}[theorem]{Remark}

\setlength{\textheight}{22.3cm}
\addtolength{\hoffset}{-1,5cm}
\addtolength{\textwidth}{3cm}
\addtolength{\voffset}{-1,5cm}
\addtolength{\textheight}{1cm}

\usepackage{amsfonts}
\usepackage[figuresright]{rotating}
\usepackage{amssymb}
\usepackage{amsmath}
\usepackage{amsmath}
\usepackage{graphics}

\title{Probability distributions with rational free $R$-transform}
\date{\today}

\author{Wojciech M{\l}otkowski}
\address{Instytut Matematyczny,
Uniwersytet Wroc{\l}awski,
Plac~Grunwaldzki~2/4,
50-384 Wroc{\l}aw, Poland}
\email{mlotkow@math.uni.wroc.pl}
\subjclass[2010]{Primary 44A60; Secondary 46L54, 05A15}
\keywords{Positive definite sequence, generating function, additive free convolution, monotone convolution, Eulerian numbers}

\begin{document}

\begin{abstract}
We study the class $\mathcal{M}_{\mathrm{ratio}}$ of those probability distributions for which the free $R$-transforms are rational functions. This class is closed under the additive free convolution, additive free powers and under the monotone convolution. We prove a sufficient condition that a rational function is the free $R$-transform of a probability distribution. Several examples are provided, including that of free deconvolution.
\end{abstract}

\maketitle

\tableofcontents
\newpage
\section*{Introduction}

The aim of this paper is to study the class $\mathcal{M}_{\mathrm{ratio}}$ of such probability distributions $\mu$ for which the free $R$-transform $R_{\mu}(w)$ is a rational function.
In another words, for a given rational function $F(w)$ we ask whether there exists a probability
distribution $\mu$ on $\mathbb{R}$ for which the moment generating function
\[M_{\mu}(z):=\int_{\mathbb{R}}\frac{\mu(dx)}{1-xz}\] satisfies equation $F(z M_{\mu}(z))=z$ in a neighbourhood of $z=0$.
If this is the case then $\mu$ has compact support and $R_{\mu}(w)=w/F(w)-1$.

Let $\mathcal{F}$ denote the class of rational functions of the form $F(w)=wP(w)/Q(w)$ for some relatively prime polynomials $P(w),Q(w)\in\mathbb{R}[w]$, satisfying $P(0)=Q(0)=1$.
We are going to study three subsets of $\mathcal{F}$: the set $\mathcal{F}_{\mathrm{dist}}$ of these $F$ that there exists $\mu\in\mathcal{M}_{\mathrm{ratio}}$ such that $F(z M_{\mu}(z))=z$ holds in a neighbourhood of $z=0$ (then we write $F=F_{\mu}$ and $\mu=\mu(F)$), the set $\mathcal{F}_{\mathrm{rr}}$ of these $F$ that a certain set $\mathcal{N}(F)$ (see Definition~\ref{def:N}) is contained in $\mathbb{R}$, and the set $\mathcal{F}_{\mathrm{rr}}^{0}$
of these $F$ that the numerator of $F'(w)$ (called the \textit{characteristic polynomial of $F$}, and denoted $\chi_{F}(w)$) has only real roots.
We will prove that
\[
\mathcal{F}_{\mathrm{rr}}^{0}\varsubsetneq
\mathcal{F}_{\mathrm{rr}}\varsubsetneq
\mathcal{F}_{\mathrm{dist}}\varsubsetneq
\mathcal{F},
\]
see Theorem~\ref{thm:Ndist}, Proposition~\ref{pro:rr0}, Example~\ref{ex:rrminus} and Example~\ref{ex:darkmatter}.
A special case, when $Q(w)=1$, was studied in \cite{LM2020}.
The families of distributions corresponding to $\mathcal{F}_{\mathrm{rr}}^{0}$ and $\mathcal{F}_{\mathrm{rr}}$ will be denoted $\mathcal{M}_{\mathrm{rr}}^{0}$ and $\mathcal{M}_{\mathrm{rr}}$ respectively.

For $F\in\mathcal{F}$ we define its $R$-transform $R(w):=w/F(w)-1$.
If $F=F_{\mu}$ for some $\mu\in\mathcal{M}_{\mathrm{ratio}}$ then $R$ coincides with the free $R$-transform of $\mu$.
For $F,F_1,F_2\in\mathcal{F}$, with the $R$-transforms $R,R_1,R_2$, and for
$t\in\mathbb{R}$, we define $F^{\boxplus t},F_1\boxplus F_2\in\mathcal{F}$ as these elements of $\mathcal{F}$ whose $R$-transforms are $t\cdot R(w)$ and $R_1(w)+R_2(w)$.
If $F=F_{\mu}$, $F_1=F_{\mu_1}$, $F_2=F_{\mu_2}$ for some $\mu,\mu_1,\mu_2\in\mathcal{M}_{\mathrm{ratio}}$ then $F^{\boxplus t}=F_{\mu^{\boxplus t}}$ (provided $\mu^{\boxplus t}$ exists) and $F_1\boxplus F_2=F_{\mu_1\boxplus\mu_2}$. Therefore $\mathcal{M}_{\mathrm{ratio}}$ is closed under the free additive convolution and under the free additive powers $\mu^{\boxplus t}$ with $t\ge1$.
Moreover, it is closed under monotone convolution, namely $F_{\mu_1\rhd\mu_2}=F_{\mu_2}\circ F_{\mu_1}$.

For fixed $F\in\mathcal{F}$ it is interesting to study the free powers $F^{\boxplus t}$, $t>0$. It may happen, that $F^{\boxplus t_0}\in\mathcal{F}_{\mathrm{rr}}^{0}$ for some $t_0>0$ and there exists $\delta>0$ such that $F^{\boxplus t}\notin\mathcal{F}_{\mathrm{rr}}^{0}$ either for all $t\in(t_0-\delta,t_0)$, or for all $t\in(t_0,t_0+\delta)$.
Then we say that $F$ admits \textit{phase transition at $t_0$.} If $\chi_t(w)$ denotes the characteristic polynomial of $F^{\boxplus t}$
and $F$ admits phase transition at $t_0$ then either $\deg \chi_{t_0}(w)\le\deg\chi_{t}(w)-2$ for $t\ne t_0$ or
$\chi_{t_0}(w)$ has a multiple root. In the latter case we will say that $F\in\mathcal{F}$ is \textit{singular}.

The methods developed here allow us in particular:
\begin{itemize}
    \item to provide two families of free deconvolution: one involving Wigner law and Mar\-chen\-ko-Pastur law, and one involving two Marcheno-Pastur laws (Section~\ref{sec:deconv}, see also Section~\ref{sec:monotone}),
    \item to prove, that for every positive integer $k$ there exists a constant $C>0$ such that $r_n=C\cdot n^k$ is the free cumulant sequence for some probability distribution (Section~\ref{sec:euler}),
    \item to prove positive definiteness for some integer sequences which are encountered in enumerative combinatorics (graphs, pattern avoiding permutations, lattice paths, rooted plane trees, noncrossing diagrams, operads, etc.).
\end{itemize}

The paper is organized as follows. First, we describe the family $\mathcal{F}$ together with some operations on it: the translations,
the dilations, the free product, the free powers, and the composition.
In Section~\ref{sec:posdef} we prove a sufficient condition for positive definiteness of the sequence $s_n(F)$ corresponding to $F\in\mathcal{F}$ (Theorem~\ref{thm:Ndist}) and define subfamilies $\mathcal{F}_{\mathrm{rr}}^{0},\mathcal{F}_{\mathrm{rr}},\mathcal{F}_{\mathrm{dist}}$ of $\mathcal{F}$.
Then we describe relations with the free and monotone convolution of probability distributions on $\mathbb{R}$.
In Section~\ref{sec:inf} we describe these freely infinitely divisible distributions which belong to $\mathcal{M}_{\mathrm{ratio}}$ (Theorem~\ref{thm:levy}). We prove that for $F\in\mathcal{F}$ we have $F=F_{\mu}$ for some freely infinitely divisible distribution $\mu$ if and only if $F^{\boxplus t}\in\mathcal{F}_{\mathrm{rr}}^{0}$ for sufficiently small $t>0$ (Theorem~\ref{thm:infdivstars}).

In Section~\ref{sec:deconv} we study two families of free deconvolution, i.e. distributions $\mu$ such that $R_{\mu}(w)=R_{\mu_1}(w)-R_{\mu_2}(w)$ for some distributions $\mu_1,\mu_2$. The first family involves one Wigner and one Marchenko-Pastur distribution as $\mu_1,\mu_2$, the second family involves two Marchenko-Pastur distributions. In both cases the corresponding $F\in\mathcal{F}_{\mathrm{rr}}^{0}$ is singular.
Further examples are obtained in Section~\ref{sec:monotone} as results of monotone convolution.

Next section is devoted to polynomials $R(w)$ which are free $R$-transforms of some distributions.
Such polynomials were studied  by Chistyakov and G\"{o}tze \cite{chistyakov2011}.
We will see that if $\deg R\le3$, $R(0)=0$, then $R$ is the free $R$ transform of a distribution if and only if the corresponding $F\in\mathcal{F}$ belongs to $F\in\mathcal{F}_{\mathrm{rr}}^{0}$, however if $\deg R=4$ then the corresponding $F$ can belong to $\mathcal{F}_{\mathrm{dist}}\setminus\mathcal{F}_{\mathrm{rr}}$, see Figure~\ref{fig:galaxy}.

In Section~\ref{sec:euler} we prove that for every integer $k\ge1$ there exists a constant $C$ such that $r_n=C\cdot n^k$ is the free cumulant sequence of a certain probability distribution. The proof relies on a result of Conger~\cite{conger2010,conger2007thesis},
which says that some version of the Eulerian polynomials have only real simple roots. In particular, for $k=1$ one can take $C=27/8$, see Example~\ref{ex:rnn1}, for the case $k=2$ see Example~\ref{ex:euler2}.

In Section~\ref{sec:polynomials} we investigate these $F\in\mathcal{F}$ which are polynomials, and free powers $F^{\boxplus t}$ of such elements~$F$.
Finally we provide a list of examples of integer sequences of the form $s_n(F)$, with $F\in\mathcal{F}_{\mathrm{rr}}^{0}$,
which can be found in the On-line Encyclopedia of Integer Sequences~\cite{oeis}~(OEIS).

\section{The class $\mathcal{F}$}\label{sec:classf}

Let $\mathcal{F}$ denote the class of all rational functions
of the form
\[
F(w)=\frac{wP(w)}{Q(w)},
\]
where $P(w),Q(w)$ are relatively prime polynomials, with real coefficients, such that $P(0)=Q(0)=1$.
Then $F(0)=0$, $F'(0)=1$.
We define the \textit{$R$-transform} of $F$ by
\begin{equation}\label{for:rtransformf}
R(w):=\frac{w}{F(w)}-1=\frac{Q(w)-P(w)}{P(w)},
\end{equation}
so that
\begin{equation}\label{for:frtransform}
F(w)=\frac{w}{1+R(w)}.
\end{equation}

For $u,c\in\mathbb{R}$, $c\ne0$, we define \textit{translation of $F$ by $u$}:
\begin{equation}\label{for:translation}
F_1(w):=\frac{F(w)}{1+uF(w)}=\frac{w P(w)}{Q(w)+uwP(w)},
\end{equation}
and \textit{dilation of $F$ by $c$}:
\begin{equation}\label{for:dilation}
F_2(w):=\frac{F(cw)}{c}=\frac{wP(cw)}{Q(cw)}.
\end{equation}
It is easy to check that the $R$-transforms corresponding to $F_1,F_2$ are
\begin{equation}\label{eq:rtransdil}
R_1(w)=R(w)+uw\quad \hbox{and}\quad R_2(w)=R(cw).
\end{equation}

Now we introduce some further operations on the class $\mathcal{F}$.

\begin{definition}
Assume that $F,F_1,F_2\in\mathcal{F}$:
\[
F(w)=\frac{wP(w)}{Q(w)},\quad
F_{1}(w)=\frac{wP_1(w)}{Q_1(w)},\quad
F_{2}(w)=\frac{wP_2(w)}{Q_2(w)},
\]
and $t\in\mathbb{R}$. Then we define
\begin{align}
\left(F_{1}\boxplus F_{2}\right)(w)&:=\frac{wF_1(w)F_2(w)}{wF_1(w)+wF_2(w)-F_1(w)F_2(w)}\\
&\phantom{:}=\frac{wP_1(w) P_2(w)}{P_1(w) Q_2(w)+P_2(w) Q_1(w)-P_1(w)P_2(w)},\\
\left(F^{\boxplus t}\right)(w)&:=\frac{wF(w)}{tw+(1-t)F(w)}=
\frac{w P(w)}{P(w)+t\big(Q(w)-P(w)\big)},\label{eq:freepower}\\
\left(F_2\circ F_1\right)(w)&:=F_2(F_1(w)),
\end{align}
called, respectively, the \textit{free convolution} of $F_1$ and $F_2$,
the \textit{free power} of $F$, and the \textit{composition} of $F_2$ and $F_1$.
\end{definition}

It is easy to check the following property:

\begin{proposition}
If $R,R_1,R_2$ are the $R$-transforms of $F,F_1,F_2$, $t\in\mathbb{R}$ then the
$R$-transforms of $F^{\boxplus t}$ and of $F_1\boxplus F_2$
are $t\cdot R(w)$ and $R_1(w)+R_2(w)$ respectively.
\end{proposition}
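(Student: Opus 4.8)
The plan is to verify the two claims directly from the definitions, since both $F^{\boxplus t}$ and $F_1\boxplus F_2$ were defined in terms of explicit rational formulas rather than in terms of $R$-transforms, so the content of the proposition is precisely that these formulas produce the advertised $R$-transforms. Recall that the $R$-transform of any $G\in\mathcal{F}$ is $R_G(w)=w/G(w)-1$, equivalently $G(w)=w/(1+R_G(w))$.

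For the free power, I would start from the defining formula $\left(F^{\boxplus t}\right)(w)=\dfrac{wF(w)}{tw+(1-t)F(w)}$. Then I compute
\[
\frac{w}{\left(F^{\boxplus t}\right)(w)}-1=\frac{tw+(1-t)F(w)}{F(w)}-1=\frac{tw+(1-t)F(w)-F(w)}{F(w)}=\frac{t\bigl(w-F(w)\bigr)}{F(w)}=t\left(\frac{w}{F(w)}-1\right)=t\cdot R(w),
\]
which is exactly the claim. I should also note in passing that $F^{\boxplus t}\in\mathcal{F}$: from the second, polynomial form $\left(F^{\boxplus t}\right)(w)=\dfrac{wP(w)}{P(w)+t(Q(w)-P(w))}$ one reads off that it is of the form $w\widetilde P(w)/\widetilde Q(w)$ with $\widetilde P=P$, $\widetilde Q=P+t(Q-P)$, and $\widetilde P(0)=\widetilde Q(0)=1$; after cancelling any common factor the hypotheses of the definition of $\mathcal{F}$ hold. (Strictly, coprimality after cancellation is automatic, and this bookkeeping is what makes the definition of $F^{\boxplus t}$ legitimate in the first place.)

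For the free convolution I proceed the same way, working from $\left(F_1\boxplus F_2\right)(w)=\dfrac{wF_1(w)F_2(w)}{wF_1(w)+wF_2(w)-F_1(w)F_2(w)}$. Dividing numerator and denominator through by $F_1(w)F_2(w)$ gives
\[
\left(F_1\boxplus F_2\right)(w)=\frac{w}{\dfrac{w}{F_2(w)}+\dfrac{w}{F_1(w)}-1}=\frac{w}{\bigl(1+R_2(w)\bigr)+\bigl(1+R_1(w)\bigr)-1}=\frac{w}{1+R_1(w)+R_2(w)},
\]
so by \eqref{for:frtransform} its $R$-transform is $R_1(w)+R_2(w)$, as claimed; and the polynomial form exhibits it as an element of $\mathcal{F}$ after cancellation. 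There is essentially no obstacle here: the only mild point of care is the division step, which is valid as an identity of rational functions (equivalently, near $w=0$, where $F_i(w)\approx w\neq0$) and hence legitimate. One could alternatively avoid dividing by clearing denominators and checking the polynomial identity directly, but the computation above is cleaner. I would close by remarking that the composition statement $F_{\mu_1\rhd\mu_2}=F_{\mu_2}\circ F_{\mu_1}$ mentioned earlier is of a different nature and is not part of this proposition.
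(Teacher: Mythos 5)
Your proof is correct, and it follows the same direct verification from the defining formulas that the paper has in mind when it states the proposition as "easy to check" (the paper itself omits the computation). The two displayed manipulations are exactly the intended argument, and your side remarks about membership in $\mathcal{F}$ and the validity of dividing by $F_1(w)F_2(w)$ near $w=0$ are sound.
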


Note that $\left(\mathcal{F},\boxplus,w\right)$ is an abelian group and $(\mathcal{F},\circ,w)$ is a unital semigroup,
the only invertible elements are of the form $F(w)=w/(1+uw)$, $u\in\mathbb{R}$.

\section{Positive definite sequences}\label{sec:posdef}

If $F\in\mathcal{F}$ then $F$ admits the composition inverse $D(z)$, so that $F(D(z))=z$ in a neighborhood of $z=0$, and $D(z)$ can be represented as the power series
\[
D(z)=z\sum_{n=0}^{\infty}s_n z^{n},
\]
with $s_0=1$. We will call the coefficients $s_n$ the \textit{moments of $F$} and denote $s_n:=s_n(F)$.
One can check that if
\[
P(w)=1+a_1 w+\ldots+a_p w^p,\qquad
Q(w)=1+b_1 w+\ldots+b_q w^q
\]
then
\begin{equation}\label{eq:firstmoments}
s_0=1,\quad
s_1=b_1-a_1,\quad
s_2=2a_1^2+b_1^2-3a_1 b_1-a_2+b_2.
\end{equation}

It is of interest to know whether the sequence $s_n$ is positive definite,
equivalently, if there exists a probability distribution $\mu$ on $\mathbb{R}$
such that $s_n$ is the moment sequence of $\mu$,~i.e.
\[
s_n=\int_{\mathbb{R}} t^n\,\mu(dt),
\]
$n=0,1,\ldots$, so that
\begin{equation}\label{eq:dintegral}
D(z):=\int_{\mathbb{R}}\frac{z\mu(dt)}{1-tz}.
\end{equation}
In such a case we set $F_{\mu}(w):=F(w)$, $R_{\mu}(w):=R(w)$, $D_{\mu}(z):=D(z)$, $s_n(\mu):=s_n(F)$ and $\mu(F):=\mu$.
If $\mu_1$ (resp. $\mu_2$) is the translation of $\mu$ by $u\in\mathbb{R}$ (resp. the dilation of $\mu$ by $c\ne0$) then the corresponding $F_1$ (resp. $F_2$) in $\mathcal{F}$ is given by (\ref{for:translation}) (resp. by (\ref{for:dilation})).
The variance $s_2-s_1^2$ of $\mu$ is nonnegative and can be $0$ only if $\mu=\delta_u$ for some $u\in\mathbb{R}$, which, by (\ref{eq:firstmoments}), leads to the following necessary condition of positive definiteness:
\begin{equation}\label{for:necessary}
a_1^2-a_1 b_1-a_2+b_2\ge0.
\end{equation}
Since $D(z)$ is analytic in a neighborhood of $z=0$, $\mu(F)$ has compact support and hence is uniquely determined by $F$.
For example, if $F(w)=w/(1+uw)$, with $u\in\mathbb{R}$, then $s_n=u^n$ and the corresponding measure $\mu$ is $\delta_{u}$.
The class of all such $F\in\mathcal{F}$ that $s_n(F)$ is positive definite we will denote by $\mathcal{F}_{\mathrm{dist}}$.

\begin{proposition}
Assume that $F\in\mathcal{F}_{\mathrm{dist}}$ and that $\mu(F)$ is not of the form $\delta_u$, $u\in\mathbb{R}$. If $t<0$ then $F^{\boxplus t}\notin\mathcal{F}_{\mathrm{dist}}$.
\end{proposition}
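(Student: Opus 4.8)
The plan is to derive a contradiction from the assumption that $F^{\boxplus t}\in\mathcal{F}_{\mathrm{dist}}$ for some $t<0$, using the behaviour of moments under scaling by negative reals. Suppose $F=F_\mu$ with $\mu=\mu(F)$ not a point mass, and suppose for some $t<0$ there is a probability distribution $\nu$ with $F_\nu = F^{\boxplus t}$. First I would record what the variance of $\nu$ must be. The $R$-transform of $F^{\boxplus t}$ is $tR(w)$, where $R(w)=R_\mu(w)$. Writing out the first moments via \eqref{eq:firstmoments} (or equivalently, noting that the free cumulant sequence scales linearly: if $r_n$ are the free cumulants of $\mu$ then $F^{\boxplus t}$ has free cumulants $tr_n$), the variance of $\nu$ equals $t$ times the variance of $\mu$. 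Since $\mu$ is not a point mass, its variance is strictly positive by the remark preceding the proposition, so the variance of $\nu$ would be $t\cdot(\text{positive})<0$. But the variance of any probability distribution is nonnegative, a contradiction.

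To make this airtight I would spell out the free-cumulant computation at order $\le 2$ so as not to rely on the full machinery of free probability: if $P,Q$ are the numerator/denominator data of $F$ and $\widetilde P,\widetilde Q$ those of $F^{\boxplus t}$, then from \eqref{eq:freepower} one reads off $\widetilde P(w)=P(w)$ and $\widetilde Q(w)=P(w)+t(Q(w)-P(w))$, so in the notation $P=1+a_1w+a_2w^2+\cdots$, $Q=1+b_1w+b_2w^2+\cdots$ we get $\widetilde a_1=a_1$, $\widetilde a_2=a_2$, $\widetilde b_1 = a_1+t(b_1-a_1)$, $\widetilde b_2=a_2+t(b_2-a_2)$. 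Plugging into \eqref{eq:firstmoments} gives $s_1(F^{\boxplus t})=\widetilde b_1-\widetilde a_1 = t(b_1-a_1)=t\,s_1(F)$ and, after a short simplification, $s_2(F^{\boxplus t})-s_1(F^{\boxplus t})^2 = t\big(s_2(F)-s_1(F)^2\big)$; i.e. the variance is exactly multiplied by $t$.

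The only subtlety — and the point I would emphasize — is that one must know $\operatorname{Var}(\mu)>0$, which is precisely the content of the sentence in the excerpt stating that the variance vanishes only for $\mu=\delta_u$; since we have assumed $\mu(F)$ is not of that form, the variance is strictly positive, and multiplying a strictly positive number by $t<0$ forces $\operatorname{Var}(\nu)<0$. As this is impossible for a genuine probability distribution, $F^{\boxplus t}\notin\mathcal{F}_{\mathrm{dist}}$. I expect no real obstacle here; the work is the one-line variance computation, and the proof is essentially complete once the scaling identity $\operatorname{Var}(F^{\boxplus t}) = t\operatorname{Var}(F)$ is established.
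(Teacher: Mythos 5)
Your proposal is correct and follows essentially the same route as the paper: the paper's proof likewise notes that the variance of $\mu(F)$, namely $a_1^2-a_1b_1-a_2+b_2$, is strictly positive since $\mu(F)$ is not a point mass, and then uses (\ref{eq:freepower}) to check that the variance of $\mu(F^{\boxplus t})$, if it existed, would be $t(a_1^2-a_1b_1+b_2-a_2)<0$, a contradiction. Your explicit computation of $\widetilde a_i,\widetilde b_i$ and the identity $\operatorname{Var}(F^{\boxplus t})=t\operatorname{Var}(F)$ is exactly the ``easy to check'' step the paper leaves to the reader.
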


\begin{proof}
By assumption the variance $a_1^2-a_1 b_1-a_2+b_2$ of $\mu(F)$ is positive and using (\ref{eq:freepower})
it is easy to check that if $\mu(F^{\boxplus t})$ exits then its variance is
$t(a_1^2-a_1 b_1+b_2-a_2)$, hence $t>0$.
\end{proof}

\begin{definition}\label{def:N}
By $\mathcal{N}(F)$ we will denote the set of all $z_0\in\mathbb{C}$ such that the polynomial
\[
wP(w)-z_0 Q(w)
\]
admits a multiple root $w_0$. Equivalently, $F(w_0)=z_0$ and $F'(w_0)=0$.
\end{definition}

We can now formulate our main result, which is a generalisation of Theorem~2.6 in \cite{LM2020}.

\begin{theorem}\label{thm:Ndist}
If $\mathcal{N}(F)\subseteq\mathbb{R}$ then $F\in\mathcal{F}_{\mathrm{dist}}$.
\end{theorem}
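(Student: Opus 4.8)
The plan is to show that when $\mathcal{N}(F)\subseteq\mathbb{R}$, the compositional inverse $D(z)$ of $F$ extends to a function that arises as the Cauchy-type transform \eqref{eq:dintegral} of a genuine probability measure. The natural tool is complex analysis on the image of the upper half-plane. First I would note that $D(z)=z+O(z^2)$ is analytic near $0$ and consider its analytic continuation. The branch points of $D$ (equivalently, the critical values of $F$, i.e.\ points where $F'$ vanishes) are exactly the elements of $\mathcal{N}(F)$ together with possibly $\infty$; the hypothesis places all the finite ones on $\mathbb{R}$. Hence $D$ continues analytically to all of $\mathbb{C}\setminus\mathbb{R}$ (more precisely to $\mathbb{C}^{+}\cup\mathbb{C}^{-}$), and by the reflection principle $\overline{D(\bar z)}=D(z)$ since $F$ has real coefficients. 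The goal then becomes to prove that $D$ maps $\mathbb{C}^{+}$ into $\mathbb{C}^{-}\cup\mathbb{R}$ (the Nevanlinna/Pick class, up to sign), because such functions with the right behavior at $\infty$ and the normalization $D(z)/z\to1$ as $z\to0$ are precisely the moment transforms of compactly supported probability measures, via the Stieltjes inversion formula.

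The key steps, in order: (1) identify the singular set of $D$ with $\mathcal{N}(F)\cup\{\text{point at infinity}\}\subseteq\mathbb{R}\cup\{\infty\}$ and conclude single-valued analyticity of $D$ on $\mathbb{C}^{+}$; (2) establish the boundary/connectivity argument showing $D(\mathbb{C}^{+})$ lies entirely in one of the two half-planes — since $D(z)\sim z$ near the origin, for $z$ with small positive imaginary part $D(z)$ has small positive imaginary part, so $D(\mathbb{C}^{+})\subseteq\mathbb{C}^{+}$ would be the naive guess, but one must be careful: in fact the relevant object is the Cauchy transform $G_{\mu}(z)=M_{\mu}(1/z)/z$, and translating between $D$, $M$, and $G$ via $F(zM(z))=z$ one checks the correct sign; (3) verify that $D$ omits the value $0$ on $\mathbb{C}^{+}$ and that the $1/z$-type behavior of the associated Cauchy transform at infinity is exactly right (residue $1$), which pins down total mass $1$; (4) apply the Stieltjes–Perron inversion formula to recover a positive measure $\mu$ with $\int\mu=1$ and moments $s_n(F)$, giving $F\in\mathcal{F}_{\mathrm{dist}}$.

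The main obstacle is step (2): proving that the continuation of $D$ from a neighborhood of $0$ does not leave the appropriate half-plane as $z$ ranges over all of $\mathbb{C}^{+}$. Because $\mathbb{C}^{+}$ is simply connected and $D$ is analytic and nonconstant there, $\operatorname{Im}D$ cannot change sign without vanishing; so it suffices to rule out $\operatorname{Im}D(z_0)=0$ for $z_0\in\mathbb{C}^{+}$. If $D(z_0)=w_0\in\mathbb{R}$ then $F(w_0)=z_0\notin\mathbb{R}$, which is fine only if $F$ does not map that part of $\mathbb{R}$ to $\mathbb{R}$ — and here is where $\mathcal{N}(F)\subseteq\mathbb{R}$ must be leveraged together with a careful analysis of the real locus $F^{-1}(\mathbb{R})$ and how the various real arcs on which $F$ is real-valued (which include $\mathbb{R}$ itself but possibly more) are separated by the critical points. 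One expects to decompose $\mathbb{R}\cup F^{-1}(\mathbb{R})$ into arcs bounded by points of $\mathcal{N}(F)$, show $F$ is monotone (hence injective) on each, and deduce that $\mathbb{C}^{+}$ is mapped by $D$ homeomorphically onto one complementary region, which by the normalization at $0$ must be $\mathbb{C}^{+}$ (or $\mathbb{C}^{-}$, consistently). This is essentially the argument of Theorem~2.6 in \cite{LM2020} for the case $Q\equiv1$, and the work here is to check it survives the presence of a nontrivial denominator $Q(w)$ — in particular that the poles of $F$ (the real zeros of $Q$) do not create extra boundary behavior that breaks the half-plane mapping property. Steps (1), (3), (4) are then routine: (1) is the standard fact that branch points of a compositional inverse are critical values; (3) follows from $F(w)=w/(1+R(w))$ with $R(0)=0$ and a residue computation; (4) is the classical Hamburger/Stieltjes moment-problem machinery once positivity of the density $\tfrac{1}{\pi}\operatorname{Im}G_{\mu}$ and finiteness of support (from compactness, as $D$ is analytic at $0$) are in hand.
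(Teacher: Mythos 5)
Your skeleton matches the paper's proof (continue the inverse $D$ to $\mathbb{C}\setminus\mathbb{R}$ using that the branch points of $D$ are the critical values of $F$, all real by hypothesis; show $D$ is a Pick function normalized by $D(0)=0$, $D'(0)=1$; then invoke the Cauchy-transform/Nevanlinna representation to get the probability measure and (\ref{eq:dintegral})). However, the step you yourself flag as the main obstacle --- ruling out $\operatorname{Im}D(z_0)=0$ for $z_0\in\mathbb{C}_{+}$ --- is left unresolved, and your plan for it rests on a misreading. You write that $D(z_0)=w_0\in\mathbb{R}$ with $F(w_0)=z_0\notin\mathbb{R}$ ``is fine only if $F$ does not map that part of $\mathbb{R}$ to $\mathbb{R}$''; but $F=wP/Q$ has real coefficients, so $F$ maps every real point that is not a pole of $F$ into $\mathbb{R}$, and $w_0$ cannot be a pole since $F(w_0)=z_0$ is finite. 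Hence $z_0=F(D(z_0))\in\mathbb{R}$, an immediate contradiction: the identity $F(D(z))=z$, which persists under analytic continuation, together with the realness of the coefficients already shows that $\operatorname{Im}D$ never vanishes on $\mathbb{C}\setminus\mathbb{R}$; connectedness of $\mathbb{C}_{+}$ and the local behaviour $D(z)\sim z$ at $0$ then force $D(\mathbb{C}_{+})\subseteq\mathbb{C}_{+}$. This one-line observation is precisely how the paper closes the argument; no decomposition of $F^{-1}(\mathbb{R})$ into monotone arcs, no separate analysis of the real zeros of $Q$, and no ``extra boundary behaviour'' from poles needs to be considered. As written, your proposal defers the crux to an argument you only ``expect'' to work, so it is not a complete proof.

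A smaller point: there is no sign ambiguity to resolve in your step (2). The function $D$ itself is a Pick function, mapping $\mathbb{C}_{+}$ into $\mathbb{C}_{+}$ (indeed $D_{\mu}(z)=G_{\mu}(1/z)$, and both $G_{\mu}$ and $z\mapsto 1/z$ exchange the half-planes), exactly as your local computation at $0$ suggests; the hedge toward ``$D$ maps $\mathbb{C}^{+}$ into $\mathbb{C}^{-}\cup\mathbb{R}$'' points the wrong way. Steps (1), (3), (4) of your outline are fine and agree with the paper.
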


\begin{proof}
Since $w=0$ is a single root of the equation $F(w)=0$, and $F'(0)=1$, $F$ has unique composition inverse $D(z)$ on a neighbourhood $U$ of $z=0$, with $D(0)=0$. Moreover, $D(z)$ can be extended to $U\cup(\mathbb{C}\setminus\mathbb{R})$, because of the assumption that $\mathcal{N}(F)\subseteq\mathbb{R}$. In addition, $D'(0)=1$, which implies that if $z\in\mathbb{C}_{+}:=\{z\in\mathbb{C}:\mathrm{Im}z>0\}$ (resp. $z\in\mathbb{C}_{-}:=\{z\in\mathbb{C}:\mathrm{Im}z<0\}$)
and $|z|$ is sufficiently small then $D(z)\in\mathbb{C}_{+}$ (resp. $D(z)\in\mathbb{C}_{-}$). Since $F(D(z))=z$, if $D(z)$ is real then so is $z$, in another words, $\mathrm{Im} D(z)$ never vanishes on $\mathbb{C}\setminus\mathbb{R}$. Consequently, $D(z)$ maps $\mathbb{C}_{+}$ into $\mathbb{C}_{+}$ and $\mathbb{C}_{-}$ into $\mathbb{C}_{+}$, i.e. $D(z)$ is a Pick function, $D(1/z)$ is the Cauchy transform of a probability distribution $\mu$ on $\mathbb{R}$,
and therefore (\ref{eq:dintegral}) holds.
\end{proof}

It is easy to see that Definition~\ref{def:N} can be formulated in the following way:

\begin{proposition}
A complex number $z_0$ belongs to $\mathcal{N}(F)$ if and only if there is $w_0\in\mathbb{C}$
such that
\begin{equation}\label{eq:charsystem}
\begin{split}
w_0 P(w_0)&=z_0 Q(w_0),\\
P(w_0)+w_0 P'(w_0)&=z_0 Q'(w_0).
\end{split}
\end{equation}
\end{proposition}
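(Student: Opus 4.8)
The plan is to unfold the two equivalent formulations of $\mathcal{N}(F)$ given in Definition~\ref{def:N} and to translate ``$wP(w)-z_0Q(w)$ has a multiple root $w_0$'' into the pair of equations \eqref{eq:charsystem}. A polynomial $g(w)$ has $w_0$ as a multiple root precisely when $g(w_0)=0$ and $g'(w_0)=0$, so I would apply this to $g(w):=wP(w)-z_0Q(w)$. The first condition $g(w_0)=0$ is exactly $w_0P(w_0)=z_0Q(w_0)$, the first line of \eqref{eq:charsystem}. Differentiating, $g'(w)=P(w)+wP'(w)-z_0Q'(w)$, so $g'(w_0)=0$ reads $P(w_0)+w_0P'(w_0)=z_0Q'(w_0)$, the second line. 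This gives one direction immediately.

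For the converse, suppose \eqref{eq:charsystem} holds for some $w_0$. Then $g(w_0)=0$ and $g'(w_0)=0$ with $g(w)=wP(w)-z_0Q(w)$, so $w_0$ is a root of $g$ of multiplicity at least two, which is what Definition~\ref{def:N} requires. The only genuinely delicate point is to reconcile this with the ``equivalently'' clause of Definition~\ref{def:N}, namely ``$F(w_0)=z_0$ and $F'(w_0)=0$'': one must be slightly careful because $F(w)=wP(w)/Q(w)$ is a rational function, so the equivalence with the polynomial statement needs $Q(w_0)\ne0$. I would note that if $Q(w_0)=0$ then, since $P$ and $Q$ are relatively prime, $P(w_0)\ne0$, and the first equation of \eqref{eq:charsystem} forces $w_0P(w_0)=0$, hence $w_0=0$; but $Q(0)=1\ne0$, a contradiction. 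So automatically $Q(w_0)\ne0$, and then $F(w_0)=z_0$ is equivalent to the first equation, while $F'(w_0)=\bigl(P(w_0)+w_0P'(w_0)\bigr)Q(w_0)^{-1}-w_0P(w_0)Q'(w_0)Q(w_0)^{-2}=0$ is equivalent, after clearing $Q(w_0)^2$ and substituting $w_0P(w_0)=z_0Q(w_0)$, to the second equation.

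So the steps are: (1) record that a multiple root of a polynomial is a common root of the polynomial and its derivative; (2) compute $g'(w)$ for $g(w)=wP(w)-z_0Q(w)$ and read off the system; (3) handle the rational-function reformulation by checking $Q(w_0)\ne0$ using coprimality and $Q(0)=1$; (4) conclude the equivalence in both directions. I do not expect any real obstacle here — the proposition is essentially a restatement, and the only thing worth spelling out is why the denominator $Q(w_0)$ cannot vanish, which keeps the polynomial condition and the $F,F'$ condition genuinely equivalent.
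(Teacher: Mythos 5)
Your proof is correct and is exactly the intended argument: the paper states this proposition without proof as an immediate reformulation of Definition~\ref{def:N}, since a multiple root of $wP(w)-z_0Q(w)$ is precisely a common zero of that polynomial and its derivative, which is the system \eqref{eq:charsystem}. Your extra verification that $Q(w_0)\ne0$ (so the polynomial condition matches the $F(w_0)=z_0$, $F'(w_0)=0$ formulation) is sound and mirrors the observations the paper records in the subsequent proposition about solutions of the characteristic system.
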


We will call (\ref{eq:charsystem}) the \textit{characteristic system of equations of $F$.}

\begin{example}[Catalan numbers $A000108$]
The generating function for the Catalan numbers satisfies: $M(z)=1+z M(z)^2$, which for $D(z):=zM(z)$ becomes $z=D(z)-D(z)^2$.
Take $F(w)=w-w^2$, so that $P(w)=1-w$, $Q(w)=1$. Then the characteristic system  (\ref{eq:charsystem}) is: $w(1-w)=z$ and $1-2w=0$, and the only solution is $(w,z)=(1/2,1/4)$. This is yet another proof that the Catalan sequence $s_n(F)$, see entry $A000108$ in OEIS \cite{oeis}, is positive definite.
\end{example}

Recall that we assume that $P(w)$, $Q(w)$ are relatively prime.

\begin{proposition}
If $(w_0,z_0)$ is a solution of (\ref{eq:charsystem}) then $w_0\ne0$ and $Q(w_0)\ne0$. If, in addition, $z_0\ne0$ then also $P(w_0)\ne0$. On the other hand, if $z_0=0$ then $P(w_0)=P'(w_0)=0$.

If $(w_0,z_0)$ is a solution of (\ref{eq:charsystem}) then $(\overline{w}_0,\overline{z}_0)$ is also a solution and if $w_0$ is real then so is $z_0$.

Assume that $F_1$, $F_2$ is the translation of $F$ by $u\ne0$ and the dilation of $F$ by $c\ne0$, respectively, and that $(w_0,z_0)$ is a solution of the characteristic system (\ref{eq:charsystem}) of $F$. Then $(w_0/c,z_0/c)$ is a solution of the characteristic system of $F_2$.
If $u z_0\ne-1$ then $\big(w_0,z_0/(1+uz_0)\big)$ is a solution of the characteristic system for $F_1$. If $uz_0=-1$ then there is no $z_1\in\mathbb{C}$ such that $(w_0,z_1)$ is a solution of the system of $F_1$.
\end{proposition}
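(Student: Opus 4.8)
The plan is to verify each assertion by direct substitution into the characteristic system (\ref{eq:charsystem}), using only three structural facts: the normalisation $P(0)=Q(0)=1$, the relative primeness of $P$ and $Q$, and the reality of all coefficients. For the nonvanishing statements: if $w_0=0$ then the first equation of (\ref{eq:charsystem}) gives $z_0=z_0Q(0)=0$ and the second then reads $1=P(0)=0$, which is absurd, so $w_0\ne0$; if $Q(w_0)=0$ then the first equation gives $w_0P(w_0)=0$, hence $P(w_0)=0$, contradicting relative primeness, so $Q(w_0)\ne0$; finally, from $z_0Q(w_0)=w_0P(w_0)$ with $w_0,Q(w_0)\ne0$ one reads off that $z_0\ne0$ forces $P(w_0)\ne0$, whereas $z_0=0$ forces $P(w_0)=0$, after which the second equation forces $w_0P'(w_0)=0$, i.e. $P'(w_0)=0$. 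For the conjugation statement I would conjugate both equations of (\ref{eq:charsystem}): since $P,Q,P',Q'$ all have real coefficients, $\overline{P(w_0)}=P(\overline w_0)$ and so on, so $(\overline w_0,\overline z_0)$ is again a solution; and if $w_0\in\mathbb{R}$ then $z_0=w_0P(w_0)/Q(w_0)$, which is well defined since $Q(w_0)\ne0$, is real.

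For the dilation $F_2(w)=F(cw)/c$ one has $P_2(w)=P(cw)$ and $Q_2(w)=Q(cw)$, so substituting $(w,z)=(w_0/c,z_0/c)$ into the characteristic system of $F_2$ and cancelling the factor $c$ from the second equation yields exactly (\ref{eq:charsystem}) for $F$ at $(w_0,z_0)$. For the translation $F_1(w)=F(w)/(1+uF(w))$ one has $P_1(w)=P(w)$ and $Q_1(w)=Q(w)+uwP(w)$, which are still relatively prime since a common root of $P_1,Q_1$ would be a common root of $P,Q$; hence $Q_1'(w)=Q'(w)+u\bigl(P(w)+wP'(w)\bigr)$. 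Substituting $(w,z)=(w_0,z_1)$ with $z_1=z_0/(1+uz_0)$ into the two equations of the $F_1$-system and replacing $w_0P(w_0)$ by $z_0Q(w_0)$ and $P(w_0)+w_0P'(w_0)$ by $z_0Q'(w_0)$, both equations reduce to trivial identities once the factor $1+uz_0$ (nonzero by hypothesis) is cancelled.

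Finally, if $uz_0=-1$ then $Q(w_0)+uw_0P(w_0)=Q(w_0)+uz_0Q(w_0)=(1+uz_0)Q(w_0)=0$, so the first equation of the $F_1$-system at $w=w_0$ would read $w_0P(w_0)=z_1\cdot 0=0$; but $z_0=-1/u\ne0$, so $P(w_0)\ne0$ by the first part, whence $w_0P(w_0)\ne0$, a contradiction, and no such $z_1$ exists. The only step requiring genuine care is the translation computation --- keeping track of $Q_1'$ and verifying that the substitutions turn both $F_1$-equations into $1+uz_0$ times the corresponding equations for $F$ --- but this is short and entirely elementary.
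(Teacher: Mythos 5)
Your proof is correct and proceeds by the same direct-verification route as the paper: substitute into the characteristic system and use only $P(0)=Q(0)=1$ together with the relative primeness of $P$ and $Q$. In fact the paper spells out only the first paragraph (the nonvanishing claims) and leaves the conjugation, dilation and translation checks as routine, so your write-up simply completes those same elementary substitutions, including the correct identification $P_1=P$, $Q_1=Q+uwP$ and the cancellation of the nonzero factor $1+uz_0$.
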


\begin{proof}
Assume that $(w_0,z_0)$ is a solution of (\ref{eq:charsystem}). If $w_0=0$ then, from the first equation, $z_0=0$ as $P(0)=Q(0)=1$, but this leads to a contradiction in the second equation.
If $z_0\ne0$ then $P(w_0)=0$ (resp. $Q(w)=0$) would imply $Q(w_0)=0$ (resp. $P(w)=0$) from the first equation.
\end{proof}

In most cases studied in this paper we will apply the following criterion.

\begin{proposition}\label{pro:rr0}
If $(w_0,z_0)$ is a solution of (\ref{eq:charsystem}) then $w_0$ is a root of the polynomial
\begin{equation}\label{for:chacharpolynomial}
\chi_F(w):=
\big(P(w)+wP'(w)\big)Q(w)-w P(w) Q'(w).
\end{equation}
Consequently, if $\chi_{F}(w)$ has only real roots then the sequence $s_n(F)$ is positive definite.
\end{proposition}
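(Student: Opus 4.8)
The plan is to eliminate $z_0$ from the characteristic system (\ref{eq:charsystem}), to recognise the resulting relation in $w_0$ as $\chi_F(w_0)=0$, and then to feed this into Theorem~\ref{thm:Ndist}.

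For the first claim, let $(w_0,z_0)$ be a solution of (\ref{eq:charsystem}). Eliminating $z_0$ --- multiply the first equation by $Q'(w_0)$, multiply the second by $Q(w_0)$, and subtract --- yields
\[
\big(P(w_0)+w_0 P'(w_0)\big)Q(w_0)-w_0 P(w_0)Q'(w_0)=0,
\]
which is exactly $\chi_F(w_0)=0$ by (\ref{for:chacharpolynomial}). (Equivalently, one may note that $F'(w)=\chi_F(w)/Q(w)^2$ and that $Q(w_0)\ne0$ for every solution of the system, so the condition $F'(w_0)=0$ from Definition~\ref{def:N} is the same as $\chi_F(w_0)=0$; the degenerate case $z_0=0$, where $P(w_0)=P'(w_0)=0$, is covered automatically since then both terms of $\chi_F$ vanish.)

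For the second claim, assume $\chi_F$ has only real roots. If $z_0\in\mathcal{N}(F)$, then by the reformulation of Definition~\ref{def:N} recorded above there is $w_0\in\mathbb{C}$ with $(w_0,z_0)$ a solution of (\ref{eq:charsystem}); by the first part $w_0$ is a root of $\chi_F$, hence $w_0\in\mathbb{R}$, and then $z_0=w_0P(w_0)/Q(w_0)\in\mathbb{R}$ because $P,Q$ have real coefficients and $Q(w_0)\ne0$ for solutions of the system. Thus $\mathcal{N}(F)\subseteq\mathbb{R}$, and Theorem~\ref{thm:Ndist} gives $F\in\mathcal{F}_{\mathrm{dist}}$, i.e. $s_n(F)$ is positive definite.

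I do not expect a genuine obstacle: the argument is just the linear elimination of $z_0$ together with the elementary facts about solutions of (\ref{eq:charsystem}) already established above, followed by a citation of Theorem~\ref{thm:Ndist}. The only point deserving a second look is that (\ref{for:chacharpolynomial}) indeed defines a polynomial which is, up to normalisation, the numerator of $F'(w)$, so that the hypothesis ``$\chi_F$ has only real roots'' is precisely the condition defining $\mathcal{F}_{\mathrm{rr}}^{0}$.
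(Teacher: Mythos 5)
Your proposal is correct and follows essentially the same route as the paper: eliminate $z_0$ from (\ref{eq:charsystem}) to get $\chi_F(w_0)=0$, then deduce $\mathcal{N}(F)\subseteq\mathbb{R}$ and invoke Theorem~\ref{thm:Ndist}. The only cosmetic difference is that your cross-multiplication handles $z_0=0$ and $z_0\ne0$ uniformly, whereas the paper treats the degenerate case $z_0=0$ (where $P(w_0)=P'(w_0)=0$) separately.
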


\begin{proof}
If $z_0=0$ then (\ref{eq:charsystem}) implies that $P(w_0)=P'(w_0)=0$, as $w_0\ne0$, and then $\chi_{F}(w_0)=0$. For $z_0\ne0$ the statement is obvious.
\end{proof}

We will call $\chi_{F}(w)$ the \textit{characteristic polynomial of $F$}.

\begin{example}[$A048779$]\label{ex:A048779}
Take $F(w):=w(1-w)(1-2w+2w^2)$. Then $\chi_{F}(w)=(1-2w)^3$
and hence the sequence $s_n(F)=A048779$ is positive definite.
\end{example}

\begin{example}
Take $P(w)=1+a_1 w$, $Q(w)=1+b_1 w+b_2 w^2$. Then
\[\chi_{F}(w)=1+2a_1 w+a_1 b_1 w^2-b_2 w^2.\]
Combining Proposition~\ref{pro:rr0} and the necessary condition (\ref{for:necessary}), we see that:
$\chi_{F}(w)$ has only real roots if and only $F\in\mathcal{F}_{\mathrm{dist}}$, equivalently: $a_1^2-a_1 b_1+b_2\ge0$.
\end{example}

\begin{remark}\label{remark:qmultiple}
Note, that if $w_0$ is a multiple root of $Q(w)$, i.e. $Q(w_0)=Q'(w_0)=0$, then $w_0$ is a root of $\chi_{F}(w)$, but there is no $z_0$ such that $(w_0,z_0)$ is a solution of (\ref{eq:charsystem}).
\end{remark}

If $\mu(F)$ exists then we set $\chi_{\mu(F)}(w):=\chi_{F}(w)$.
Denote by $\mathcal{F}_{\mathrm{dist}}$ the class of all $F\in\mathcal{F}$
such that $s_n(F)$ is positive definite and by $\mathcal{M}_{\mathrm{ratio}}$
the class of the corresponding distributions $\mu(F)$.
Moreover, denote by $\mathcal{F}_{\mathrm{rr}}$ (respectively, $\mathcal{F}_{\mathrm{rr}}^{0}$) the set of all $F\in\mathcal{F}$ such that $\mathcal{N}(F)\subseteq\mathbb{R}$ (respectively, that all the roots of $\chi_F(w)$
are real), and the corresponding families of distributions
by $\mathcal{M}_{\mathrm{rr}}$ and $\mathcal{M}_{\mathrm{rr}}^{0}$, respectively.
We will see that
\[
\mathcal{F}_{\mathrm{rr}}^{0}\varsubsetneq
\mathcal{F}_{\mathrm{rr}}\varsubsetneq
\mathcal{F}_{\mathrm{dist}}\varsubsetneq
\mathcal{F},
\]
consequently  $\mathcal{M}_{\mathrm{rr}}^0\varsubsetneq
\mathcal{M}_{\mathrm{rr}}\varsubsetneq
\mathcal{M}_{\mathrm{ratio}}$.

\begin{example}[$A121988$]\label{ex:rrminus}
Take $F(w)=w(1-w)(1-w+w^2)$. Then the solutions $(w,z)$ of (\ref{eq:charsystem}) are
\[
\left(\frac{1}{2},\frac{3}{16}\right),\quad
\left(\frac{1}{2}\pm\frac{\mathrm{i}}{2},\frac{1}{4}\right)
\]
and the characteristic polynomial is
\[
\chi_{F}(w)=(1-2w)(1-2w+2w^2).
\]
Therefore the sequence $s_n(F)$ is positive definite and $F\in\mathcal{F}_{\mathrm{rr}}\setminus\mathcal{F}_{\mathrm{rr}}^{0}$.
\end{example}

Now we observe that the class $\mathcal{F}_{\mathrm{rr}}$ is closed under composition, i.e. $\left(\mathcal{F}_\mathrm{rr},\circ,w\right)$ is a unital semigroup.

\begin{proposition}\label{pro:monotonerr}
If $F_1,F_2\in\mathcal{F}_{\mathrm{rr}}$ then $F_2\circ F_1\in\mathcal{F}_{\mathrm{rr}}$.
\end{proposition}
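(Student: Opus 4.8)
The plan is to characterize membership in $\mathcal{F}_{\mathrm{rr}}$ in terms of the set $\mathcal{N}(F)$, i.e.\ the set of \emph{critical values} of $F$ as a rational map, and then to use the composition structure of critical values. Recall that $z_0\in\mathcal{N}(F)$ if and only if $z_0=F(w_0)$ for some $w_0$ with $F'(w_0)=0$; so $\mathcal{N}(F)$ is exactly the set of finite critical values of $F$ (together with the treatment of the $z_0=0$ case handled by an earlier proposition, where $P(w_0)=P'(w_0)=0$). The key observation is the chain rule: $(F_2\circ F_1)'(w)=F_2'(F_1(w))\cdot F_1'(w)$. Hence a point $w_0$ is a critical point of $F_2\circ F_1$ precisely when either $F_1'(w_0)=0$, or $F_1'(w_0)\ne0$ but $F_2'(F_1(w_0))=0$. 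In the first case the corresponding critical value is $(F_2\circ F_1)(w_0)=F_2(F_1(w_0))=F_2(z_1)$ where $z_1=F_1(w_0)\in\mathcal{N}(F_1)$; in the second case it is $F_2(F_1(w_0))$ with $F_1(w_0)$ a critical point of $F_2$, so the value lies in $\mathcal{N}(F_2)$.

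Concretely, the first step is to establish the set-theoretic inclusion
\[
\mathcal{N}(F_2\circ F_1)\subseteq \mathcal{N}(F_2)\cup F_2\bigl(\mathcal{N}(F_1)\bigr),
\]
where $F_2(\mathcal{N}(F_1))$ denotes the image of $\mathcal{N}(F_1)$ under $F_2$, with the convention that if $z_1\in\mathcal{N}(F_1)$ is a pole of $F_2$ we simply discard it (this mirrors Remark~\ref{remark:qmultiple}: a point mapping to a pole of $F_2$, i.e.\ a multiple root of the denominator of $F_2\circ F_1$, contributes a root of the characteristic polynomial but no element of $\mathcal{N}$). The degenerate case $z_0=0$ needs a separate check: if $z_0=0\in\mathcal{N}(F_2\circ F_1)$ then the numerator of $F_2\circ F_1$ has a double root $w_0$; one shows this forces either $0\in\mathcal{N}(F_1)$ (so $0=F_2(0)\in F_2(\mathcal{N}(F_1))$ since $F_2(0)=0$) or $0\in\mathcal{N}(F_2)$, using $F_2(0)=0$ and $F_2'(0)=1$.

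Granting that inclusion, the conclusion is immediate: if $F_1,F_2\in\mathcal{F}_{\mathrm{rr}}$ then $\mathcal{N}(F_1)\subseteq\mathbb{R}$ and $\mathcal{N}(F_2)\subseteq\mathbb{R}$; since $F_2$ has real coefficients it maps $\mathbb{R}$ (minus its poles) into $\mathbb{R}$, so $F_2(\mathcal{N}(F_1))\subseteq\mathbb{R}$, and therefore $\mathcal{N}(F_2\circ F_1)\subseteq\mathbb{R}$, i.e.\ $F_2\circ F_1\in\mathcal{F}_{\mathrm{rr}}$. One also needs the trivial remark that $F_2\circ F_1$ indeed lies in $\mathcal{F}$ — this is already recorded in Section~\ref{sec:classf}, as composition is one of the declared operations on $\mathcal{F}$ — and that the unit $w$ acts as identity, giving the unital semigroup statement.

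The main obstacle is bookkeeping around the exceptional points rather than any deep idea: one must carefully handle (i) the case $z_0=0$, where $\mathcal{N}$ is defined by the vanishing of $P$ and $P'$ rather than by a genuine critical-value condition, and (ii) cancellations and poles — when writing $F_2\circ F_1=wP/Q$ in lowest terms, common factors between numerator and denominator can be created, and a multiple root of the resulting $Q$ must be excluded from $\mathcal{N}$ per Remark~\ref{remark:qmultiple}, exactly as a point $w_0$ with $F_1(w_0)$ equal to a pole of $F_2$ should be. The cleanest way to sidestep most of this is to argue analytically with critical points and critical values of the meromorphic function $F_2\circ F_1$ on $\widehat{\mathbb{C}}$, reserving the algebraic Definition~\ref{def:N} only for the final translation back to $\mathcal{N}$; the chain rule then does essentially all the work.
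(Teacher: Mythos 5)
Your proposal is correct and follows essentially the same route as the paper: both use the chain rule to show $\mathcal{N}(F_2\circ F_1)\subseteq\mathcal{N}(F_2)\cup F_2\bigl(\mathcal{N}(F_1)\bigr)$ (discarding points of $\mathcal{N}(F_1)$ that hit poles of $F_2$, as the paper does with its set $\mathcal{N}_1'$), and then conclude realness because $F_2$ has real coefficients. Your additional bookkeeping about the $z_0=0$ case and cancellations is fine but not a different argument.
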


\begin{proof}
If $z_0\in\mathcal{N}(F_2\circ F_1)$ then $F_2(F_1(w_0))=z_0$ for some $w_0\in\mathbb{C}$
such that $F'_2(F_1(w_0))\cdot F'_1(w_0)=0$.
If $F'_2(F_1(w_0))=0$ then $z_0\in\mathcal{N}(F_2)$.
If $F'_1(w_0)=0$ then $F_1(w_0)\in\mathcal{N}(F_1)$, which implies that $F_1(w_0)$ and $F_2(F_1(w_0))$ are real.
Hence $\mathcal{N}(F_2\circ F_1)=\mathcal{N}(F_2)\cup F_2[\mathcal{N}_1']$
where $\mathcal{N}_1'$ denotes the set of $z_0\in\mathcal{N}(F_1)$
such that $Q_2(z_0)\ne0$.
\end{proof}

It is easy to check, using (\ref{for:translation}), that translation doesn't change the characteristic polynomial.

\begin{proposition}\label{prop:charpoltranslation}
If $F_1(w)$ is the translation of $F$ by $u\in\mathbb{R}$
then $\chi_{F_1}(w)=\chi_{F}(w)$.

If $F_2(w)$ is the dilation of $F$ by $c\ne0$
then $\chi_{F_2}(w)=\chi_{F}(cw)$.
\end{proposition}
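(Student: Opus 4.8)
The plan is to verify both statements by direct computation from the defining formulas, exploiting the fact that the characteristic polynomial $\chi_F(w)$ is, up to the issue flagged in Remark~\ref{remark:qmultiple}, essentially the numerator of $F'(w)$. Indeed, writing $F(w)=wP(w)/Q(w)$, one computes
\[
F'(w)=\frac{\big(P(w)+wP'(w)\big)Q(w)-wP(w)Q'(w)}{Q(w)^2}=\frac{\chi_F(w)}{Q(w)^2},
\]
so $\chi_F(w)$ records the zeros of $F'$ together with (possibly) some cancelled zeros coming from multiple roots of $Q$.

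For the translation claim I would start from the explicit form in \eqref{for:translation}, namely $F_1(w)=wP(w)/\big(Q(w)+uwP(w)\big)$, so that $P_1(w)=P(w)$ and $Q_1(w)=Q(w)+uwP(w)$; note these remain a valid $(P_1,Q_1)$ pair with $P_1(0)=Q_1(0)=1$, and they are relatively prime since a common root $w_0$ would force $P(w_0)=0$ hence $Q(w_0)=0$, contradicting that $P,Q$ are coprime. Then I would substitute into the definition \eqref{for:chacharpolynomial}: since $P_1=P$, the term $\big(P_1+wP_1'\big)Q_1$ becomes $\big(P+wP'\big)\big(Q+uwP\big)$, while $wP_1Q_1'=wP\big(Q'+uP+uwP'\big)$. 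Expanding and collecting, the terms involving $u$ are $u\big[(P+wP')wP-wP(P+wP')\big]=0$, and what remains is exactly $\big(P+wP'\big)Q-wPQ'=\chi_F(w)$. Alternatively — and more conceptually — one can argue via the $R$-transform: by \eqref{eq:rtransdil} the translation sends $R(w)$ to $R(w)+uw$, hence by \eqref{for:frtransform} it sends $F$ to $w/\big(1+R(w)+uw\big)$, and a short computation of the derivative shows the added linear term $uw$ in the denominator contributes nothing to the numerator of $F'$; I would present the direct polynomial computation as the main line since it also handles the coprimality bookkeeping cleanly.

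For the dilation claim I would use \eqref{for:dilation}: $F_2(w)=F(cw)/c=wP(cw)/Q(cw)$, so $P_2(w)=P(cw)$, $Q_2(w)=Q(cw)$, again a legitimate coprime pair with value $1$ at $0$. Plugging into \eqref{for:chacharpolynomial} and using the chain rule $P_2'(w)=cP'(cw)$, $Q_2'(w)=cQ'(cw)$, one gets
\[
\chi_{F_2}(w)=\big(P(cw)+wcP'(cw)\big)Q(cw)-wP(cw)\,cQ'(cw),
\]
which upon writing $v=cw$ so that $wc=v$ becomes precisely $\big(P(v)+vP'(v)\big)Q(v)-vP(v)Q'(v)=\chi_F(v)=\chi_F(cw)$.

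Neither step presents a genuine obstacle; the only thing to be slightly careful about is the degenerate case $u z_0=-1$ (for translations) or multiple roots of $Q$, where the bijection between solutions of the characteristic systems of $F$ and $F_1$ breaks down — but since the present Proposition is a statement purely about the \emph{polynomial} $\chi$ and not about the set $\mathcal{N}$, this subtlety does not intervene here, and was already recorded in the preceding Proposition. I would therefore expect the proof to be two short displayed computations, one per assertion, with a parenthetical remark that coprimality of $P,Q$ is preserved under both operations so that $F_1,F_2$ genuinely lie in $\mathcal{F}$ and the formula \eqref{for:chacharpolynomial} applies to them verbatim.
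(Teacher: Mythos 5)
Your proposal is correct and is essentially the computation the paper has in mind: the paper states the result without proof, remarking only that it is ``easy to check, using (\ref{for:translation})'', and your direct substitution of $P_1=P$, $Q_1=Q+uwP$ (resp.\ $P_2(w)=P(cw)$, $Q_2(w)=Q(cw)$) into (\ref{for:chacharpolynomial}), with the cancellation of the $u$-terms and the chain-rule rescaling, is exactly that check, including the sensible verification that coprimality and the normalization at $0$ are preserved.
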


By abuse of notation, we will sometimes write $\chi_t$ for the characteristic polynomial of $F^{\boxplus t}$ if $F\in\mathcal{F}$ is fixed.

\begin{proposition}
If $F(w)=wP(w)/Q(w)\in\mathcal{F}$, $t\in\mathbb{R}$, then the characteristic polynomial $\chi_t(w)$ of $F^{\boxplus t}$ is
\begin{multline}\label{for:chart}
\chi_{t}(w)=
P(w)^2\\
+t\left[\big(P(w)+wP'(w)\big)\big(Q(w)-P(w)\big)-wP(w)\big(Q'(w)-P'(w)\big)\right].
\end{multline}
\end{proposition}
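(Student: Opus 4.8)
The plan is to read off an explicit polynomial presentation of $F^{\boxplus t}$ from its very definition and substitute it into the formula (\ref{for:chacharpolynomial}) for the characteristic polynomial, taking advantage of the fact that all the ingredients are \emph{affine} in the parameter $t$.

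First I would record, directly from (\ref{eq:freepower}), that
\[
F^{\boxplus t}(w)=\frac{wP_t(w)}{Q_t(w)},\qquad
P_t(w):=P(w),\qquad
Q_t(w):=P(w)+t\bigl(Q(w)-P(w)\bigr).
\]
One checks that $Q_t(0)=1$, and that for $t\ne0$ a common zero of $P$ and $Q_t$ would also be a common zero of $P$ and $Q$; since $P$ and $Q$ are relatively prime, so are $P_t$ and $Q_t$. Hence this is the presentation of $F^{\boxplus t}\in\mathcal{F}$ in lowest terms, and (\ref{for:chacharpolynomial}) applies verbatim to give $\chi_t(w)=\bigl(P_t+wP_t'\bigr)Q_t-wP_tQ_t'$. (The value $t=0$ is the trivial case $F^{\boxplus 0}=w$ and can be handled on its own, or simply absorbed at the end by continuity, since both sides of (\ref{for:chart}) are polynomials in $t$.)

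Next I would exploit linearity in $t$. Since $P_t$, $P_t'$, $Q_t$, $Q_t'$ are affine in $t$, so is $\chi_t(w)$, say $\chi_t=A(w)+tB(w)$. Putting $t=0$ gives $Q_0=P$, hence $A=\chi_0=(P+wP')P-wPP'=P^2$. Putting $t=1$ gives $Q_1=Q$ and $F^{\boxplus 1}=F$, hence $A+B=\chi_1=\chi_F=(P+wP')Q-wPQ'$, so that $B=\chi_F-P^2$. It then remains to verify the elementary identity
\[
\chi_F-P^2=\bigl(P+wP'\bigr)\bigl(Q-P\bigr)-wP\bigl(Q'-P'\bigr),
\]
which falls out of expanding the right-hand side once the two copies of $wPP'$ cancel; substituting $A=P^2$ and this form of $B$ into $\chi_t=A+tB$ produces exactly (\ref{for:chart}).

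I do not anticipate any real obstacle: the argument is polynomial bookkeeping, considerably streamlined by the affine interpolation between $t=0$ (where the denominator collapses to $P$) and $t=1$ (where $F^{\boxplus t}$ is $F$ itself). The one point that genuinely needs attention is checking that $wP_t/Q_t$ is already a reduced fraction before invoking (\ref{for:chacharpolynomial}): otherwise the formula would pick up a spurious square factor $\gcd(P_t,Q_t)^2$, and it is precisely the coprimality of $P$ and $Q$ (for $t\ne0$) that prevents this.
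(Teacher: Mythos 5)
Your proof is correct and is essentially the computation the paper leaves to the reader: substitute $P_t=P$, $Q_t=P+t\bigl(Q-P\bigr)$ from (\ref{eq:freepower}) into (\ref{for:chacharpolynomial}) and note the cancellation of the two $wPP'$ terms; the affine-in-$t$ interpolation through $t=0$ and $t=1$ is just a tidy way of organizing that same expansion. The only caveat is the degenerate value $t=0$, where $F^{\boxplus 0}=w$ reduces to $P_0=Q_0=1$ and the characteristic polynomial of the reduced fraction is $1$ rather than $P(w)^2$, so the formula must be read as computed from the presentation $wP/\bigl(P+t(Q-P)\bigr)$ (as the paper implicitly does) — continuity in $t$ does not repair this, though it is immaterial for every $t\ne0$, where your coprimality argument is exactly the right point to check.
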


\subsection{Real-rooted polynomials}\label{subsec:poly}
It will be convenient to understand the topology of real-rooted polynomials.

\newpage
\begin{proposition}
Let $\mathcal{D}_n$ denote the set of all such sequences $(c_1,\ldots,c_n)\in\mathbb{R}^n$
that the polynomial
\[
C(w)=1+c_1 w+\ldots+c_n w^n
\]
has only real roots. Then $\mathcal{D}_n$ is a closed subset of $\mathbb{R}^n$.
Moreover $(c_1,\ldots,c_n)$ belongs to the interior of $\mathcal{D}_n$ if and only if
either
\begin{itemize}
    \item $c_n\ne0$ and all the $n$ roots of $C(w)$ are real and simple, or
    \item $c_n=0$, $c_{n-1}\ne0$ and all the $n-1$ roots of $C(w)$ are real and simple.
\end{itemize}

Consequently, a sequence $(c_1,\ldots,c_n)\in\mathcal{D}_n$ belongs to the boundary of $\mathcal{D}_n$
if and only if either $C(w)$ has a multiple root or $c_n=c_{n-1}=0$.
\end{proposition}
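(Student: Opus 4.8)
The plan is to reduce the entire statement to a question about the monic \emph{reversed polynomial} $\widetilde{C}(w):=w^{n}C(1/w)=w^{n}+c_{1}w^{n-1}+\dots+c_{n}$, which has degree exactly $n$ for every $(c_{1},\dots,c_{n})\in\mathbb{R}^{n}$. Since $C(0)=1$, every root of $C$ is nonzero; the nonzero roots of $\widetilde{C}$ are exactly the reciprocals of the roots of $C$ (with multiplicity), and $0$ is a root of $\widetilde{C}$ of multiplicity $n-\deg C$ (which is $\ge2$ precisely when $c_{n}=c_{n-1}=0$). Hence $C$ has only real roots if and only if $\widetilde{C}$ does, so $(c_{1},\dots,c_{n})\mapsto\widetilde{C}$ identifies $\mathcal{D}_{n}$ with the set of coefficient vectors (below the leading term) of real-rooted monic polynomials of degree $n$; from now on I work only with such polynomials, where the degree is fixed and the roots depend continuously on the coefficients.

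For closedness, take $c^{(k)}\to c$ with each $\widetilde{C}_{k}$ real-rooted and factor $\widetilde{C}_{k}(w)=\prod_{j=1}^{n}(w-\rho_{j}^{(k)})$ with $\rho_{j}^{(k)}\in\mathbb{R}$. Convergence of the coefficients gives a uniform Cauchy-type bound on the $\rho_{j}^{(k)}$, so after passing to a subsequence $\rho_{j}^{(k)}\to\rho_{j}\in\mathbb{R}$ for each $j$, and then $\widetilde{C}=\prod_{j}(w-\rho_{j})$ is real-rooted; thus $c\in\mathcal{D}_{n}$.

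For the interior, I claim a point lies in the interior of $\mathcal{D}_{n}$ if and only if $\widetilde{C}$ has $n$ distinct real roots. If $\widetilde{C}$ has distinct real roots $\rho_{1}<\dots<\rho_{n}$, choose small pairwise disjoint disks centred at the $\rho_{j}$ (these are symmetric under complex conjugation); by Rouch\'e's theorem every monic degree-$n$ polynomial close enough to $\widetilde{C}$ has exactly one root in each disk, and a conjugation-symmetric disk that contains exactly one root of a real polynomial contains a real one, so every nearby polynomial is again real-rooted with simple roots and the point is interior. Conversely, if $\widetilde{C}$ is real-rooted but has a multiple root $\rho\in\mathbb{R}$, write $\widetilde{C}(w)=(w-\rho)^{2}g(w)$ with $g$ monic of degree $n-2$; for $\varepsilon>0$ the polynomial $\widetilde{C}(w)+\varepsilon g(w)=\bigl((w-\rho)^{2}+\varepsilon\bigr)g(w)$ has the non-real roots $\rho\pm\mathrm{i}\sqrt{\varepsilon}$, yet differs from $\widetilde{C}$ only in coefficients of degree at most $n-2$ and converges to $\widetilde{C}$ as $\varepsilon\to0^{+}$, so arbitrarily close points lie outside $\mathcal{D}_{n}$ and the point is not interior.

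It remains to translate ``$\widetilde{C}$ has $n$ distinct real roots'' back to $C$: either $0$ is not a root of $\widetilde{C}$, i.e.\ $c_{n}\ne0$ and $C$ has $n$ distinct real roots, or $0$ is a simple root of $\widetilde{C}$, i.e.\ $c_{n}=0$, $c_{n-1}\ne0$ and $C$ has $n-1$ distinct real roots; this gives the two bullet cases. Likewise ``$\widetilde{C}$ real-rooted with a multiple root'' means either $C$ has a multiple root or $0$ is a multiple root of $\widetilde{C}$, i.e.\ $c_{n}=c_{n-1}=0$, which yields the boundary description. The one genuinely delicate point is the forward implication in the interior argument --- ruling out that a simple real root becomes non-real under a small perturbation --- and this is exactly what the conjugation-symmetric disks in Rouch\'e's theorem take care of (equivalently, one may invoke continuity of roots together with the fact that non-real roots occur in conjugate pairs).
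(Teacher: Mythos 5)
Your proof is correct, and it takes a partly different route from the paper's at the decisive step. The paper parametrizes real-rooted polynomials by writing $C(w)=\prod_{i=1}^{n}(1+v_iw)$ with $v_i\in\mathbb{R}$ and computes the Jacobian $\det\left(\partial c_k/\partial v_i\right)=\prod_{i<j}(v_i-v_j)$, so that near a point with pairwise distinct $v_i$ the root-to-coefficient map is locally invertible (hence open) and the coefficient vector is interior, with the degree-drop case (one $v_i=0$) noted separately; you instead pass to the monic reversed polynomial $\widetilde{C}(w)=w^nC(1/w)$ and use Rouch\'e's theorem with conjugation-symmetric disks to show that simple real-rootedness persists under small perturbations. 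Your reversal trick has the advantage of treating the two bullet cases uniformly, since a drop in the degree of $C$ is just a simple root of $\widetilde{C}$ at $0$, and it replaces the inverse function theorem by continuity of roots. The non-interior direction is essentially the same in both arguments: the paper perturbs a repeated factor $(1+v_1w)^2$ to $1+2v_1w+(v_1^2+t)w^2$, while you perturb $(w-\rho)^2g(w)$ to $\left((w-\rho)^2+\varepsilon\right)g(w)$, which is the same quadratic perturbation viewed through the reversal; your version also covers the $c_n=c_{n-1}=0$ boundary case in one stroke. Finally, you give an explicit compactness argument for the closedness of $\mathcal{D}_n$, which the paper leaves implicit.
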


\begin{proof}
For $(c_1,\ldots,c_n)\in\mathcal{D}_n$ we can write
\[
C(w)=1+c_1 w+\ldots+c_n w^n=\prod_{i=1}^{n}(1+v_i w)
\]
for some $v_i\in\mathbb{R}$. The coefficients $c_k$ can be expressed as the elementary symmetric polynomials of $v_1,\ldots,v_n$:
\[
c_k(v_1,\ldots,v_n)=\sum_{1\le i_1<i_2<\ldots<i_k\le n} v_{i_1}v_{i_2}\ldots v_{i_k}
\]
and we have
\[
\det\left(\frac{\partial c_{k}}{\partial v_i}\right)_{i,k=1}^{n}
=\prod_{1\le i<j\le n}(v_i-v_j).\]
Therefore the function
$F:\mathbb{R}^{n}\to\mathcal{D}_{n}$ defined by
\[
(v_1,\ldots,v_n)\mapsto
\big(c_1(v_1,\ldots,v_n),\ldots,c_{n}(v_1,\ldots,v_n)\big)
\]
is invertible in a neighbourhood of any $(v_1,\ldots,v_n)$
satisfying $v_1<v_2<\ldots<v_n$. If one of $v_i$ is $0$ then
the corresponding polynomial $C(w)$ has degree $n-1$ and has $n-1$ real and simple roots.

On the other hand, if $C(w)=\prod_{i=1}^{n}(1+v_i w)$, with $v_i\in\mathbb{R}$, $v_1=v_2$,
then the coefficient sequence of the polynomial
\[
\left(1+2v_1 w+(v_1^2+t)w^2\right)\prod_{i=3}^{n}(1+v_i w),
\]
with $t>0$, can be arbitrarily close to that of $C(w)$.
\end{proof}

Now define $\mathcal{F}(p,q)$ to be the family of all $F(w)=wP(w)/Q(w)\in\mathcal{F}$ such that $\deg P(w)\le p$, $\deg Q(w)\le q$, with the natural topology inherited from $\mathbb{R}^{p+q}$.
Then $\mathcal{F}_{\mathrm{rr}}^{0}\cap\mathcal{F}(p,q)$,
$\mathcal{F}_{\mathrm{rr}}\cap\mathcal{F}(p,q)$ and
$\mathcal{F}_{\mathrm{dist}}\cap\mathcal{F}(p,q)$ are closed subsets of $\mathcal{F}(p,q)$.

\begin{corollary}\label{cor:topology}
If $F\in\mathcal{F}(p,q)$, $\deg \chi_{F}(w)\ge p+q-1$ and all the roots of $\chi_{F}(w)$ are real and simple then $F$ belongs to the interior of $\mathcal{F}_{\mathrm{rr}}^{0}\cap\mathcal{F}(p,q)$
in $\mathcal{F}(p,q)$.

On the other hand, if $F\in\mathcal{F}(p,q)$ belongs to the boundary of $\mathcal{F}_{\mathrm{rr}}^{0}\cap\mathcal{F}(p,q)$ in $\mathcal{F}(p,q)$ then
either $\deg\chi_{F}(w)\le p+q-2$ or $\chi_{F}(w)$ has a multiple root.
\end{corollary}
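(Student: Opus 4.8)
The plan is to reduce the whole statement to the preceding Proposition on the topology of $\mathcal{D}_n$, by realizing $\mathcal{F}_{\mathrm{rr}}^{0}\cap\mathcal{F}(p,q)$ as the preimage of $\mathcal{D}_{p+q}$ under a continuous map. First I would record the bookkeeping: writing $P(w)=1+a_1w+\dots+a_pw^p$ and $Q(w)=1+b_1w+\dots+b_qw^q$, formula \eqref{for:chacharpolynomial} shows the coefficients of $\chi_{F}(w)$ to be polynomial expressions in $(a_1,\dots,a_p,b_1,\dots,b_q)$; moreover $\chi_{F}(0)=P(0)Q(0)=1$, and since $\deg(P+wP')\le p$ and $\deg(wPQ')\le p+q$ we get $\deg\chi_{F}\le p+q$. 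Hence $F\mapsto\chi_{F}$ defines a continuous map $\Phi\colon\mathcal{F}(p,q)\to\mathbb{R}^{p+q}$ sending $F$ to the coefficient vector $(c_1,\dots,c_{p+q})$ of $\chi_{F}(w)=1+c_1w+\dots+c_{p+q}w^{p+q}$, and by the definition of $\mathcal{F}_{\mathrm{rr}}^{0}$ we have $\mathcal{F}_{\mathrm{rr}}^{0}\cap\mathcal{F}(p,q)=\Phi^{-1}(\mathcal{D}_{p+q})$.

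For the first assertion I would observe that, because $\deg\chi_{F}\le p+q$ always holds, the hypotheses say exactly that $\Phi(F)$ lies in the interior of $\mathcal{D}_{p+q}$: the condition ``$\deg\chi_{F}\ge p+q-1$ and all roots real and simple'' means either $c_{p+q}\ne0$ with $p+q$ real simple roots, or $c_{p+q}=0$, $c_{p+q-1}\ne0$ with $p+q-1$ real simple roots, i.e.\ precisely one of the two cases of the interior criterion of the preceding Proposition. Then $F\in\Phi^{-1}(\operatorname{int}\mathcal{D}_{p+q})$, which is open in $\mathcal{F}(p,q)$ (preimage of an open set under a continuous map) and contained in $\Phi^{-1}(\mathcal{D}_{p+q})=\mathcal{F}_{\mathrm{rr}}^{0}\cap\mathcal{F}(p,q)$; hence $F$ lies in the interior of the latter.

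The second assertion is then the contrapositive combined with closedness. Since $\mathcal{F}_{\mathrm{rr}}^{0}\cap\mathcal{F}(p,q)$ is closed in $\mathcal{F}(p,q)$ (as noted just before the corollary), a boundary point $F$ belongs to it, so all roots of $\chi_{F}$ are real, yet $F\notin\operatorname{int}(\mathcal{F}_{\mathrm{rr}}^{0}\cap\mathcal{F}(p,q))$. By the first assertion the conjunction ``$\deg\chi_{F}\ge p+q-1$ and all roots of $\chi_{F}$ simple'' must fail; as the roots are real, the only remaining possibilities are $\deg\chi_{F}\le p+q-2$ or $\chi_{F}$ having a multiple root, which is the claim.

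I do not expect a genuine obstacle: the substance is already contained in the preceding Proposition, and what remains is soft. The one point needing care is the degree bookkeeping — checking $\chi_{F}(0)=1$ and $\deg\chi_{F}\le p+q$, so that the two cases of the interior criterion for $\mathcal{D}_{p+q}$ match up precisely with $\deg\chi_{F}\in\{p+q-1,\,p+q\}$ — together with the harmless observation that the coprimality condition carving $\mathcal{F}(p,q)$ out of $\mathbb{R}^{p+q}$ amounts to the nonvanishing of the resultant of $P$ and $Q$, which makes $\mathcal{F}(p,q)$ open in $\mathbb{R}^{p+q}$ (so relative and ambient interiors agree) and in any case does not interfere with the preimage argument.
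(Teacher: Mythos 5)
Your argument is correct and is essentially the paper's own (implicit) derivation: the corollary is exactly the pullback of the interior/boundary description of $\mathcal{D}_{p+q}$ under the continuous coefficient map $F\mapsto\chi_F$, using $\chi_F(0)=1$, $\deg\chi_F\le p+q$, and the closedness of $\mathcal{F}_{\mathrm{rr}}^{0}\cap\mathcal{F}(p,q)$ in $\mathcal{F}(p,q)$. The only inaccuracy is the closing aside: coprimality is not equivalent to nonvanishing of the $(p,q)$-resultant once degrees drop, and $\mathcal{F}(p,q)$ need not be open in $\mathbb{R}^{p+q}$ (e.g.\ the point $P=Q=1$ of $\mathcal{F}(1,1)$ is a limit of the non-coprime pairs $P=Q=1+\varepsilon w$), but as you yourself note this is irrelevant, since all interiors and boundaries in the statement are taken relative to $\mathcal{F}(p,q)$.
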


\begin{definition}
We call an element $F\in\mathcal{F}$ \textit{singular}
if $\chi_{F}$ has a multiple real root.
\end{definition}

\subsection{Phase transition}

\begin{definition}
We say that $F\in\mathcal{F}$ admits \textit{phase transition at $t_0>0$} if $F^{\boxplus t_0}\in\mathcal{F}_{\mathrm{rr}}^{0}$ and there is $\delta>0$ such that either $F^{\boxplus t}\notin\mathcal{F}_{\mathrm{rr}}^{0}$ for $t_0-\delta<t<t_0$ or $F^{\boxplus t}\notin\mathcal{F}_{\mathrm{rr}}^{0}$ for $t_0<t<t_0$.
\end{definition}

Assume that $F\in\mathcal{F}$, $\chi_t(w)$ is the characteristic polynomial of $F^{\boxplus t}$, $\chi_{t_0}(w)$ has all real roots and that $\chi_t(w)$ has some complex, not real roots for $t_0-\delta<t<t_0$ (or for $t_0<t<t_0+\delta$). Writing
\[
\chi_{t}(w)=\prod_{i=1}^{r}\big(1-v_i(t)w\big),
\]
we can assume that each $v_i(t)$ depends continuously on $t\in(t_0-\delta, t_0)$ (or on $t\in(t_0,t_0+\delta)$). Moreover, there are $1\le i<j\le r$ such that $v_i(t)=\overline{v_j(t)}\notin \mathbb{R}$. Then $\lim_{t\to t_0^{-}}v_i(t)=\lim_{t\to t_0^{-}}v_j(t)=v_0$ (or, respectively, $\lim_{t\to t_0^{+}}v_i(t)=\lim_{t\to t_0^{+}}v_j(t)=v_0$), $v_0\in\mathbb{R}$, and then either $v_0\ne0$ and $1/v_0$ is a multiple root of $\chi_{t_0}(w)$ or $v_0=0$ and then the $\deg \chi_{t_0}(w)\le\deg \chi_{t}(w)-2$ for $t\ne t_0$. This justifies the following definition and corollary.

\begin{definition}
We call $t_0>0$ \textit{critical} for $F$ if either $\chi_{t_0}(w)$ has a multiple real root or $\deg \chi_{t_0}(w)\le\deg\chi_{t}(w)-2$ for $t\ne t_0$.
\end{definition}

\begin{corollary}
If $F$ admits phase transition at $t_0>0$ then $t_0$ is critical for $F$.
\end{corollary}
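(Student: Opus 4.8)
The plan is to make rigorous the discussion preceding the Corollary. By hypothesis $F^{\boxplus t_0}\in\mathcal{F}_{\mathrm{rr}}^{0}$, so $\chi_{t_0}(w)$ has only real roots, and by the definition of phase transition there is a one-sided interval $I$ — say $I=(t_0-\delta,t_0)$, the case $I=(t_0,t_0+\delta)$ being entirely symmetric — on which $F^{\boxplus t}\notin\mathcal{F}_{\mathrm{rr}}^{0}$, i.e.\ $\chi_t(w)$ has, for every $t\in I$, a non-real root and hence (being a real polynomial) a conjugate pair of non-real roots. From (\ref{for:chart}) the coefficients of $\chi_t(w)$ are affine in $t$; in particular $\chi_t(w)$ is jointly continuous in $(t,w)$, and $\chi_t(0)=P(0)^2=1$ for every $t$, so $0$ is never a root of $\chi_t$. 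Let $d$ be the largest index such that the coefficient of $w^{d}$ in $\chi_t(w)$ is not identically zero as a function of $t$; being affine and not identically zero, this coefficient vanishes at most at one value $t^{\ast}$, so $\deg\chi_t=d$ for all $t\neq t^{\ast}$, while $\deg\chi_{t_0}\le d$.

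First I would pick a sequence $t_n\in I$ with $t_n\to t_0$ and, for each $n$, a conjugate pair $w_n,\overline{w_n}$ of non-real roots of $\chi_{t_n}$; discarding at most one term I may assume $\deg\chi_{t_n}=d$ for all $n$. Viewing $w_n$ in the compact Riemann sphere $\mathbb{C}\cup\{\infty\}$, I pass to a subsequence along which $w_n\to w_{\ast}$.

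If $w_{\ast}\in\mathbb{C}$, joint continuity gives $\chi_{t_0}(w_{\ast})=\lim_n\chi_{t_n}(w_n)=0$, so $w_{\ast}$ is a root of $\chi_{t_0}$ and hence real; consequently $\overline{w_n}\to w_{\ast}$ as well. For large $n$ the two distinct points $w_n\neq\overline{w_n}$ are roots of $\chi_{t_n}$ inside an arbitrarily small disc about $w_{\ast}$, so Hurwitz's theorem (applied to $\chi_t\to\chi_{t_0}$, which converges uniformly on compacta) forbids $w_{\ast}$ from being a simple root of $\chi_{t_0}$: it is a multiple real root, so $t_0$ is critical. If $w_{\ast}=\infty$, then also $\overline{w_n}\to\infty$, and I would pass to the reciprocal polynomials $\widetilde\chi_t(w):=w^{d}\chi_t(1/w)$; their leading coefficient is $\chi_t(0)=1$, so they are monic of degree $d$, they converge to $\widetilde\chi_{t_0}$, and since $\chi_{t_0}(0)=1\neq0$ the polynomial $\widetilde\chi_{t_0}$ has a zero at $0$ of multiplicity exactly $d-\deg\chi_{t_0}$. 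The distinct numbers $1/w_n$ and $1/\overline{w_n}$ are roots of $\widetilde\chi_{t_n}$ tending to $0$, so Hurwitz forces $d-\deg\chi_{t_0}\ge2$. In particular $\deg\chi_{t_0}<d$, whence $t^{\ast}=t_0$ and $\deg\chi_t=d\ge\deg\chi_{t_0}+2$ for every $t\neq t_0$; again $t_0$ is critical.

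The easy points ($\chi_t(0)=1$, joint continuity, affineness of the coefficients) are immediate; the substance is the correct accounting of roots under coefficient perturbation, and in particular the treatment of the possible drop of $\deg\chi_t$ at $t_0$ — exactly the $v_0=0$ alternative in the discussion. That is why I would invoke Hurwitz's theorem both for the finite limit $w_{\ast}$ and, via the reciprocal polynomials, for the limit at infinity, and why the pigeonhole/compactness step isolating a single convergent conjugate pair is preferable to trying to track all the $v_i(t)$ continuously at once. I expect this degree-drop case to be the main obstacle to a fully clean write-up.
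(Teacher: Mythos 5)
Your argument is correct and follows essentially the same route as the paper's own justification (the informal discussion preceding the corollary): follow a conjugate pair of non-real roots of $\chi_t$ as $t$ tends to $t_0$ from the relevant side, and split into collision at a finite real point, giving a multiple real root of $\chi_{t_0}$, versus escape to infinity, giving $\deg\chi_{t_0}\le\deg\chi_t-2$ for $t\ne t_0$. The only difference is in implementation: where the paper writes $\chi_t(w)=\prod_i\bigl(1-v_i(t)w\bigr)$ and tracks the inverse roots $v_i(t)$ continuously (the degree-drop case being $v_0=0$), you use sequential compactness on the Riemann sphere together with Hurwitz's theorem for $\chi_t$ and for the reciprocal polynomials, which in fact supplies the existence and identification of the one-sided limit that the paper's sketch takes for granted.
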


\begin{example}[$A001764$]\label{ex:darkmatter}
Take $P(w)=1-w^2$, $Q(w)=1$. Then
\[
R(w)=\frac{w}{2(1-w)}+\frac{w}{2(1+w)}
\]
so we will see that  $\mu(F)=\mathrm{MP}(1,1/2)\boxplus \mathrm{MP}(-1,1/2)$
in notation of Section~\ref{sec:inf}.
The corresponding moment sequence $s_n(F)$ is aerated $A001764$.

We have
\[
F^{\boxplus t}(w)=\frac{w-w^3}{1+(t-1)w^2}
\]
and the characteristic polynomial of $F^{\boxplus t}$ is
\[
\chi_t(w)=1-(2+t)w^2+(1-t)w^4.
\]
Hence $F^{\boxplus t}\in\mathcal{F}_{\mathrm{rr}}^{0}$ if and only if $0<t\le1$. In fact, $F^{\boxplus t}\notin\mathcal{F}_{\mathrm{rr}}$ for $t>1$
For $t=2$ the solutions of (\ref{eq:charsystem}) are
\begin{align*}
(w,z)&=\left(\pm\sqrt{\sqrt{5}-2},\frac{\pm 1}{2}\left(\sqrt{5}-1\right)\sqrt{\sqrt{5}-2}\right),\\
(w,z)&=\left(\pm\mathbf{i}\sqrt{\sqrt{5}+2},\frac{\pm \mathbf{i}}{2}\left(\sqrt{5}+1\right)\sqrt{\sqrt{5}+2}\right)
\end{align*}
and $s_n(F^{\boxplus2})$ is the aerated sequence $A027307$.
Similarly, $s_n(F^{\boxplus3})$ is aerated $A219535$.
In view of Corollary~\ref{cor:freemonotoneconv}, $F^{\boxplus t}\in\mathcal{F}_{\mathrm{dist}}$,
so that the sequence $s_n\left(F^{\boxplus t}\right)$ is positive definite, for all $t>0$.
Therefore $\mathcal{F}_{\mathrm{rr}}\varsubsetneq
\mathcal{F}_{\mathrm{dist}}$.
\end{example}

\section{Free and monotone convolution}\label{sec:freemonotone}

Let $\mathcal{M}_{\mathrm{c}}$ denote the class of all probability distributions on $\mathbb{R}$ with compact support. For $\mu\in\mathcal{M}_{\mathrm{c}}$ define its \textit{moments}:
\[
s_n(\mu):=\int_{\mathbb{R}}x^n\,\mu(dx),
\]
the \textit{moment generating function}:
\[
M_{\mu}(z):=\sum_{n=0}^{\infty} s_n(\mu) z^n=\int_{\mathbb{R}}\frac{1}{1-xz}\,\mu(dx),
\]
the \textit{upshifted moment generating function} (see \cite{liupego2016}):
\[
D_{\mu}(z):=\sum_{n=0}^{\infty} s_n(\mu) z^{n+1}=\int_{\mathbb{R}}\frac{z}{1-xz}\,\mu(dx),
\]
and the \textit{Cauchy transform}:
\[
G_{\mu}(z):=\sum_{n=0}^{\infty} s_n(\mu) z^{-n-1}=\int_{\mathbb{R}}\frac{1}{x-z}\,\mu(dx).
\]
Then $M_{\mu}(z)$, $D_{\mu}(z)$ are defined on some some neighbourhood of $z=0$ and on $\mathbb{C}\setminus\mathbb{R}$, $G_{\mu}(z)$ is defined on some neighbourhood of $\infty$ and on $\mathbb{C}\setminus\mathbb{R}$. Moreover, $G_{\mu}$ maps $\mathbb{C}^{-}:=\{z\in\mathbb{C}:\mathrm{Im}z<0\}$ into $\mathbb{C}^{+}:=\{z\in\mathbb{C}:\mathrm{Im}z>0\}$ and $\mathbb{C}^{+}$ into $\mathbb{C}^{-}$, $D_{\mu}$ maps $\mathbb{C}^{+}$ into $\mathbb{C}^{+}$ and $\mathbb{C}^{-}$ into $\mathbb{C}^{-}$, which means that $D_{\mu}$ is a \textit{Pick function}. We have also $M_{\mu}(0)=1$, $D_{\mu}(0)=0$, $D_{\mu}'(0)=1$, $D_{\mu}(z)=zM_{\mu}(z)=G_{\mu}(1/z)$.
In addition, we define the \textit{free $R$-transform} of $\mu$ by the equation
\begin{equation}\label{eq:freerm}
1+R_{\mu}\left(zM_{\mu}(z)\right)=M_{\mu}(z)
\end{equation}
for $z$ in a neighbourhood of $0$, equivalently,
\begin{equation}
1+R_{\mu}\left(D_{\mu}(z)\right)=\frac{D_{\mu}(z)}{z}.
\end{equation}
The coefficients $r_n(\mu)$ in the Taylor expansion
\[
R_{\mu}(w)=\sum_{n=1}^{\infty} r_n(\mu) w^n
\]
are called the \textit{free cumulants} of $\mu$.
If $F_{\mu}(w)$ is the composition inverse of $D_{\mu}(z)$ on a neighbourhood of $z=0$ then
\begin{equation}
R_{\mu}(w)=\frac{w}{F_{\mu}(w)}-1,
\end{equation}
we refer to the monographs \cite{VoiDyNi1992,NicaSpeicher2006,MingoSpeicher2017} for more details.

The ``free part'' of the following theorem is a consequence of the theory of free probability, due to Voiculescu, see \cite{VoiDyNi1992,NicaSpeicher2006,MingoSpeicher2017}. The ``monotone part'' comes from the fact that the composition of two Pick functions is again a Pick function, which plays an important role in the theory of monotone independence due to Muraki~\cite{Muraki2001}.

\begin{theorem}
Assume that $\mu,\mu_1,\mu_2\in\mathcal{M}_{\mathrm{c}}$, $t\ge1$. Then there exist unique distributions in $\mathcal{M}_{\mathrm{c}}$, denoted $\mu^{\boxplus t},\mu_1\boxplus\mu_2,\mu_1\lhd\mu_2$, called respectively the additive free power, the additive free convolution and the monotone convolution, such that:
\begin{align*}
R_{\mu^{\boxplus t}}(w)&=t\cdot R_{\mu}(w),\\
R_{\mu_1\boxplus\mu_2}(w)&=R_{\mu_1}(w)+R_{\mu_2}(w),\\
D_{\mu_1\lhd\mu_2}(z)&=D_{\mu_1}\left(D_{\mu_2}(z)\right).
\end{align*}
\end{theorem}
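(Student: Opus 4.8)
The plan is to handle the two ``free'' assertions and the ``monotone'' assertion separately, since they rest on different mechanisms, and to dispose of uniqueness once and for all: whichever probability measures $\mu^{\boxplus t}$, $\mu_1\boxplus\mu_2$, $\mu_1\lhd\mu_2$ we produce will have compact support, hence be uniquely determined by their moment sequences, and those are the Taylor coefficients at $0$ of the associated $D$-transforms, which in turn are prescribed by the stated $R$- or $D$-transform identity. So it suffices, in each case, to exhibit \emph{one} compactly supported probability measure realizing the required transform.

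For the additive free convolution I would invoke Voiculescu's operator model. Realize $\mu_1$ and $\mu_2$ as the spectral distributions, with respect to a faithful tracial state $\tau$, of bounded self-adjoint operators $a_1,a_2$ that are freely independent in a $W^{\ast}$-probability space $(\mathcal{A},\tau)$; the free-product construction makes this possible, and compactness of $\operatorname{supp}\mu_i$ is precisely boundedness of $a_i$. Then $a_1+a_2$ is bounded and self-adjoint, so its $\tau$-distribution $\mu_1\boxplus\mu_2$ is a compactly supported probability measure, and Voiculescu's addition theorem for $R$-transforms gives $R_{\mu_1\boxplus\mu_2}=R_{\mu_1}+R_{\mu_2}$ (see \cite{VoiDyNi1992,NicaSpeicher2006,MingoSpeicher2017}). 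For $\mu^{\boxplus t}$ with $t\ge1$: the case $t\in\mathbb{N}$ is the iterated free convolution just constructed, and for arbitrary real $t\ge1$ one uses the theorem of Nica and Speicher that the partial semigroup $t\mapsto\mu^{\boxplus t}$ extends from $\mathbb{N}$ to $[1,\infty)$ within the class of compactly supported probability measures, still with $R_{\mu^{\boxplus t}}=t\cdot R_{\mu}$ (see \cite{NicaSpeicher2006}).

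For the monotone convolution a short self-contained argument is available, and I would spell it out. Set $D:=D_{\mu_1}\circ D_{\mu_2}$. Since $D_{\mu_2}$ is a non-constant Pick function it maps $\mathbb{C}^{+}$ into $\mathbb{C}^{+}$ and $\mathbb{C}^{-}$ into $\mathbb{C}^{-}$, so the composite $D$ is holomorphic on $\mathbb{C}\setminus\mathbb{R}$ with $D(\mathbb{C}^{\pm})\subseteq\mathbb{C}^{\pm}$; and because $D_{\mu_2}(0)=0$, $D_{\mu_2}'(0)=1$ while $D_{\mu_1}$ is holomorphic near $0$ with $D_{\mu_1}(0)=0$, $D_{\mu_1}'(0)=1$, the composite $D$ is holomorphic near $0$ with $D(0)=0$, $D'(0)=1$. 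By the analytic characterization of $D$-transforms used in the proof of Theorem~\ref{thm:Ndist} --- a Pick function holomorphic at $0$ with value $0$ and derivative $1$ there is $D_{\nu}$ for a unique $\nu\in\mathcal{M}_{\mathrm{c}}$ --- there is a unique compactly supported probability measure, which we call $\mu_1\lhd\mu_2$, with $D_{\mu_1\lhd\mu_2}=D_{\mu_1}\circ D_{\mu_2}$. This is precisely the statement that a composition of Pick functions is a Pick function, the analytic underpinning of Muraki's monotone independence \cite{Muraki2001}.

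The genuinely hard ingredient is the free part, and it is external to the elementary manipulations of this paper: the algebra relating $R$ and $D$ only produces a formal power series, and the fact that $R_{\mu_1}+R_{\mu_2}$, respectively $t\cdot R_{\mu}$ with $t\ge1$, is the $R$-transform of an honest \emph{positive} measure with compact support is the content of Voiculescu's theorem together with the Nica--Speicher extension of the free-convolution semigroup to exponents $t\ge1$. The monotone part, by contrast, needs nothing beyond the Pick-function properties of $D_{\mu_1}$ and $D_{\mu_2}$ recorded just before the theorem, and uniqueness in all three cases is the determinacy of the moment problem on a compact interval. Accordingly the final write-up will consist of precise references for the free assertions and of the three-line analytic argument above for the monotone assertion.
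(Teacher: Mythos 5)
Your proposal is correct and follows essentially the same route as the paper, which likewise treats the free assertions as Voiculescu's theory (with the Nica--Speicher extension for real $t\ge1$) cited from \cite{VoiDyNi1992,NicaSpeicher2006,MingoSpeicher2017} and derives the monotone assertion from the fact that a composition of Pick functions normalized at $0$ is again such a Pick function, hence a $D$-transform, as in Muraki's framework \cite{Muraki2001}. Your added remarks on uniqueness via moment determinacy on compact supports only make explicit what the paper leaves implicit.
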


Applying this theorem to our framework we obtain:

\begin{corollary}\label{cor:freemonotoneconv}
Assume that $F,F_1,F_2\in\mathcal{F}_{\mathrm{dist}}$ and $t\ge1$.
Then
\[
F^{\boxplus t},\,\,F_1\boxplus F_2,\,\, F_2\circ F_1\in\mathcal{F}_{\mathrm{dist}}.
\]
In particular, $(\mathcal{F}_{\mathrm{dist}},\boxplus,w)$ and $(\mathcal{F}_{\mathrm{dist}},\circ,w)$ constitute unital semigroups.
\end{corollary}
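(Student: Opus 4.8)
The plan is to reduce the statement to the free/monotone convolution Theorem above, using the $R$-transform as the dictionary for $\boxplus$ and $\boxplus t$, and the composition inverse as the dictionary for $\circ$. Throughout I write $\mu:=\mu(F)$, $\mu_1:=\mu(F_1)$, $\mu_2:=\mu(F_2)\in\mathcal{M}_{\mathrm{c}}$; these exist by the definition of $\mathcal{F}_{\mathrm{dist}}$ and satisfy $F=F_{\mu}$, $F_i=F_{\mu_i}$ with $R$-transforms $R=R_{\mu}$, $R_i=R_{\mu_i}$. The only uniqueness fact I need is that, by \eqref{for:frtransform}, an element of $\mathcal{F}$ is determined by its $R$-transform.

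For the free power, fix $t\ge1$ and apply the Theorem to obtain $\mu^{\boxplus t}\in\mathcal{M}_{\mathrm{c}}$ with $R_{\mu^{\boxplus t}}=t\cdot R_{\mu}$. By the Proposition immediately following the definition of the operations on $\mathcal{F}$, the $R$-transform of $F^{\boxplus t}\in\mathcal{F}$ is also $t\cdot R=t\cdot R_{\mu}$, so $F^{\boxplus t}=F_{\mu^{\boxplus t}}$; hence $s_n(F^{\boxplus t})=s_n(\mu^{\boxplus t})$ is positive definite and $F^{\boxplus t}\in\mathcal{F}_{\mathrm{dist}}$. The argument for $F_1\boxplus F_2$ is identical: the Theorem provides $\mu_1\boxplus\mu_2\in\mathcal{M}_{\mathrm{c}}$ with $R$-transform $R_{\mu_1}+R_{\mu_2}=R_1+R_2$, which is also the $R$-transform of $F_1\boxplus F_2$, so $F_1\boxplus F_2=F_{\mu_1\boxplus\mu_2}\in\mathcal{F}_{\mathrm{dist}}$.

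For the composition, recall that $F_i$ and $D_{\mu_i}$ are mutually inverse analytic germs at $0$, so the composition inverse of $z\mapsto D_{\mu_1}(D_{\mu_2}(z))$ is $F_{\mu_2}\circ F_{\mu_1}=F_2\circ F_1$. By the Theorem there is a distribution $\nu:=\mu_1\lhd\mu_2\in\mathcal{M}_{\mathrm{c}}$ with $D_{\nu}(z)=D_{\mu_1}(D_{\mu_2}(z))$, and since $F_2\circ F_1\in\mathcal{F}$ (Section~\ref{sec:classf}), both $F_{\nu}$ and $F_2\circ F_1$ are the composition inverse of the same germ, hence $F_{\nu}=F_2\circ F_1$. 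Therefore $s_n(F_2\circ F_1)=s_n(\nu)$ is positive definite and $F_2\circ F_1\in\mathcal{F}_{\mathrm{dist}}$.

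Finally, $w\in\mathcal{F}$ has $R$-transform $0$ and moments $s_n(w)=0^n$, so $\mu(w)=\delta_0$ and $w\in\mathcal{F}_{\mathrm{dist}}$; moreover $w$ is the identity for $\boxplus$ (as $R+0=R$) and for $\circ$. Combined with the closure statements just proved, this shows that $(\mathcal{F}_{\mathrm{dist}},\boxplus,w)$ and $(\mathcal{F}_{\mathrm{dist}},\circ,w)$ are unital semigroups. I expect the only step needing any care to be the bookkeeping of composition inverses in the monotone case, and more generally the verification that the rational functions produced by the explicit formulas defining $\boxplus$, $\boxplus t$, $\circ$ really coincide with the $F$-transforms of the measures supplied by the Theorem; since all identities involved are between analytic germs at $0$ and elements of $\mathcal{F}$ are pinned down by their $R$-transforms, this is routine rather than a genuine obstacle.
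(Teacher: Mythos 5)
Your proposal is correct and follows the same route as the paper: the corollary is obtained by applying the free/monotone convolution theorem to the measures $\mu(F)$, $\mu(F_1)$, $\mu(F_2)$ and matching $R$-transforms (for $\boxplus t$ and $\boxplus$) resp.\ composition inverses of the upshifted moment generating functions (for $\circ$), exactly as the paper intends when it says the corollary follows by applying the theorem to this framework. Your extra bookkeeping (uniqueness of the element of $\mathcal{F}$ with a given $R$-transform, and $\mu(w)=\delta_0$ for the unit) is the routine detail the paper leaves implicit.
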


One can also combine free power with dilation.

\begin{corollary}\label{cor:rdilation}
Assume that $F\in\mathcal{F}_{\mathrm{dist}}$, with the corresponding $R$-transform $R(w)$, and that $t\ge1$, $\tau\in\mathbb{R}\setminus\{0\}$, and let $F_1\in\mathcal{F}$ correspond to the $R$-transform $R_1(w)=t\cdot R(\tau w)$. Then $F_1\in\mathcal{F}_{\mathrm{dist}}$.
\end{corollary}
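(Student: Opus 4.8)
The plan is to realise $F_1$ as the result of two operations on $F$, each of which is already known to preserve $\mathcal{F}_{\mathrm{dist}}$: a dilation by $\tau$, followed by a free power $\boxplus t$ with $t\ge 1$.

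First I would record that dilation by a nonzero real number $c$ maps $\mathcal{F}_{\mathrm{dist}}$ into itself. Indeed, if $F\in\mathcal{F}_{\mathrm{dist}}$ and $\mu:=\mu(F)$, let $\mu_c$ be the pushforward of $\mu$ under $x\mapsto cx$; this is again a compactly supported probability distribution, with moments $s_n(\mu_c)=c^n s_n(\mu)$, so the sequence $s_n(\mu_c)$ is positive definite. By (\ref{for:dilation}) and (\ref{eq:rtransdil}) the element of $\mathcal{F}$ corresponding to $\mu_c$ is precisely the dilation $F_2(w)=F(cw)/c$, whose $R$-transform is $R(cw)$; hence $F_2\in\mathcal{F}_{\mathrm{dist}}$. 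Applying this with $c=\tau$, the element $F_\tau\in\mathcal{F}$ with $R$-transform $R(\tau w)$ lies in $\mathcal{F}_{\mathrm{dist}}$.

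Next I would invoke Corollary~\ref{cor:freemonotoneconv}: since $F_\tau\in\mathcal{F}_{\mathrm{dist}}$ and $t\ge 1$, the free power $F_\tau^{\boxplus t}$ again lies in $\mathcal{F}_{\mathrm{dist}}$. By the proposition computing $R$-transforms of free powers, the $R$-transform of $F_\tau^{\boxplus t}$ is $t\cdot R(\tau w)=R_1(w)$. Since an element of $\mathcal{F}$ is uniquely determined by its $R$-transform via (\ref{for:frtransform}), this forces $F_1=F_\tau^{\boxplus t}$, and therefore $F_1\in\mathcal{F}_{\mathrm{dist}}$, as claimed.

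I do not expect a genuine obstacle: the statement is a direct consequence of the stability of $\mathcal{F}_{\mathrm{dist}}$ under dilation and under free powers $\boxplus t$ with $t\ge 1$. The only points deserving a word of care are that $\tau$ may be negative — harmless, since $x\mapsto\tau x$ still pushes a probability measure forward to a probability measure — and that the two operations must be composed correctly; but both orders (dilate then take the free power, or take the free power then dilate) yield the same $R$-transform $t\cdot R(\tau w)$ and hence the same $F_1$.
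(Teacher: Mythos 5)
Your argument is correct and is exactly the route the paper intends: the corollary is stated right after Corollary~\ref{cor:freemonotoneconv} as a combination of the dilation stability already recorded in Section~\ref{sec:posdef} (the pushforward under $x\mapsto\tau x$ gives the dilated $F$ with $R$-transform $R(\tau w)$) with the free-power stability for $t\ge1$. Your extra care about negative $\tau$ and the uniqueness of $F_1$ given its $R$-transform is sound and fills in the details the paper leaves implicit.
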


\begin{example}\label{ex:rnn1}
Is $r_n=n$ the free cumulant sequence for a certain probability distribution?
We have
\[
R(w):=\sum_{n=1}^{\infty}n w^n=\frac{w}{(1-w)^2},
\]
and the corresponding $F$-function is
\[
F(w)=\frac{w(1-w)^2}{1-w+w^2}.
\]
However the sequence $s_n(F)$:
\[
1, 1, 3, 10, 37, 146, 602, 2563, 11181, 49720, 224540,\ldots,
\]
indexed in OEIS as A109081, is not positive definite, because $\det\big(s_{i+j}(F)\big)_{i,j=0}^{5}=-3374<0$.
Hence $r_n=n$ is not the free cumulant sequence of a distribution. One can also verify that the sequences $s_n\left(F^{\boxplus2}\right)$ and $s_n\left(F^{\boxplus3}\right)$ are not positive definite because $\det\left(s_{i+j}(F^{\boxplus2})\right)_{i,j=0}^{10}<0$
and $\det\left(s_{i+j}(F^{\boxplus3})\right)_{i,j=0}^{31}<0$.

The characteristic polynomial for $F^{\boxplus t}$ is
\[
\chi_t(w)=(1-w)(1-3w+3w^2-2tw^2-w^3),
\]
solving: $\chi_{t}(w)=\chi_{t}'(w)=0$, we get solutions: $(w,t)=(1,0)$ and $(w,t)=(-2,27/8)$. For $t=27/8$ we have
\[
\chi_{27/8}(w)=\frac{1}{4}(1-w)(1-4w)(2+w)^2.
\]
Therefore $F^{\boxplus 27/8}\in\mathcal{F}_{\mathrm{rr}}^{0}$ and the sequence $r_n=27n/8$ is the free cumulant sequence for a certain probability distribution.
We conjecture that $27/8$ is the smallest constant with this property.
\end{example}


\section{Freely infinitely divisible distributions in $\mathcal{M}_{\mathrm{ratio}}$}\label{sec:inf}

A distribution $\mu\in\mathcal{M}_{\mathrm{c}}$ is called \textit{freely infinitely divisible} if for every $n\in\mathbb{N}$ there exists $\mu_{n}\in\mathcal{M}_{\mathrm{c}}$ such that
$\mu=\mu_n^{\boxplus n}$. This implies, that for every real $t>0$ there exists a distribution, denoted $\mu^{\boxplus t}$, whose the $R$-transform is $t\cdot R_{\mu}(w)$. Denote by $\mathcal{M}_{\mathrm{ratio}}^{\mathrm{fid}}$ the class of those $\mu\in\mathcal{M}_{\mathrm{ratio}}$ which are freely infinitely divisible and by $\mathcal{F}_{\mathrm{dist}}^{\mathrm{fid}}$ the class of corresponding $F\in\mathcal{F}$.

Three important families of distributions belong to $\mathcal{M}_{\mathrm{ratio}}^{\mathrm{fid}}$, namely,
the one-point distribution $\delta_u$, $u\in\mathbb{R}$,
the Marchenko-Pastur law:
\[
\mathrm{MP}(v,t):=\max\{1-t,0\}\delta_0+\frac{1}{|v|}f_t\left(\frac{x}{v}\right)\,dx,
\]
$v\in\mathbb{R}\setminus\{\emptyset\}$, $t>0$,
where
\[
f_t(x)=
\left\{\begin{array}{ll}
\frac{\sqrt{4t-(x-1-t)^2}}{2\pi x}&\hbox{if $(x-1-t)^2<4t$},\\
0&\hbox{otherwise},
\end{array}\right.
\]
and the semi-circle Wigner law:
\[
\mathrm{W}(t)=
\frac{1}{2\pi t}\sqrt{4t-x^2}\mathbf{1}_{[-2\sqrt{t},2\sqrt{t}]}(x)\,dx,
\]
$t>0$.
The corresponding $R$-transforms and $F$-functions are the following:
\begin{align*}
R_{\delta_u}(w)&=uw, &F_{\delta_u}(w)&=\frac{w}{1+uw},\\
R_{\mathrm{MP}(v,t)}(w)&=\frac{tvw}{1-vw},
&F_{\mathrm{MP}(v,t)}(w)&=\frac{w(1-vw)}{1-vw+tvw},\\
R_{\mathrm{W}(t)}(w)&=tw^2, &F_{\mathrm{W}(t)}(w)&=\frac{w}{1+tw^2},
\end{align*}
and the characteristic polynomials are:
\begin{align*}
\chi_{\delta_u}(w)&=1,\\
\chi_{\mathrm{MP}(v,t)}(w)&=1-2vw+(1-t)v^2 w^2,\\
\chi_{\mathrm{W}(t)}(w)&=1-t w^2.
\end{align*}
All these distributions belong to $\mathcal{M}_{\mathrm{rr}}^{0}$.
The corresponding moment sequences are:
$s_n(\delta_u)=u^n$,
\[
s_n\big(\mathrm{MP}(v,t)\big)=v^n\sum_{k=1}^{n}\binom{n}{k-1}\binom{n-1}{k-1}\frac{t^k}{k}
\]
for $n\ge1$ (the coefficients $\binom{n}{k-1}\binom{n-1}{k-1}\frac{1}{k}$ are called \textit{Narayana numbers}, $A001263$ in OEIS), and
$s_{2n}\big(\mathrm{W}(t)\big)=\binom{2n+1}{n}\frac{t^n}{2n+1}$, $s_{2n+1}\big(\mathrm{W}(t)\big)=0$, with the Catalan numbers $\binom{2n+1}{n}\frac{1}{2n+1}$, $A000108$, as coefficients.

Now we will provide a version of the L\'evy--Khintchine theorem for the class $\mathcal{M}_{\mathrm{ratio}}^{\mathrm{fid}}$.

\newpage
\begin{theorem}\label{thm:levy}
A distribution $\mu\in\mathcal{M}_{\mathrm{c}}$ belongs to $\mathcal{M}_{\mathrm{ratio}}^{\mathrm{fid}}$ if and only if there exist some $u\in\mathbb{R}$, $c_0\ge0$, $r\ge0$, $c_1,\ldots,c_r>0$ and some distinct $u_1,\ldots,u_r\in\mathbb{R}\setminus\{0\}$ such that
\[
\mu=\delta_u\boxplus\mathrm{W}(c_0)\boxplus\mathrm{MP}(u_1,c_1)\boxplus\ldots
\boxplus\mathrm{MP}(u_r,c_r),
\]
so that
\begin{equation}\label{for:rinfdiv}
R_{\mu}(w)=uw+c_0 w^2 +\sum_{k=1}^{r}\frac{c_k u_k w}{1-u_k w}.
\end{equation}
Such a decomposition is unique.
\end{theorem}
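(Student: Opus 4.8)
The plan is to prove the two implications separately and then address uniqueness. For the "if" direction, suppose $\mu$ has the stated form. Since each of $\delta_u$, $\mathrm{W}(c_0)$, $\mathrm{MP}(u_k,c_k)$ lies in $\mathcal{M}_{\mathrm{ratio}}^{\mathrm{fid}}$ (their $R$-transforms are rational, and they are freely infinitely divisible as recalled above), and since $\mathcal{M}_{\mathrm{ratio}}$ is closed under free additive convolution (the product of the corresponding $F$'s, via $F_1\boxplus F_2$, stays in $\mathcal{F}$, and $\mathcal{F}_{\mathrm{dist}}$ is closed under $\boxplus$ by Corollary~\ref{cor:freemonotoneconv}), it follows that $\mu\in\mathcal{M}_{\mathrm{ratio}}$. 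Free infinite divisibility of $\mu$ follows because the class of freely infinitely divisible distributions is closed under $\boxplus$, or directly: $R_\mu(w)=uw+c_0w^2+\sum_k c_k u_k w/(1-u_k w)$ is of the required additive form and $t\cdot R_\mu$ again has this shape, being the $R$-transform of $\delta_{tu}\boxplus\mathrm{W}(tc_0)\boxplus\mathrm{MP}(u_1,tc_1)\boxplus\cdots\boxplus\mathrm{MP}(u_r,tc_r)$, which lies in $\mathcal{M}_{\mathrm{ratio}}^{0}$ for all $t>0$ (each factor does, and by Proposition~\ref{pro:monotonerr}-type closure, or more simply because one can check directly that $\mathcal{F}_{\mathrm{rr}}^{0}$ is closed under $\boxplus$ when the characteristic polynomials factor nicely; alternatively invoke Theorem~\ref{thm:infdivstars} once it is available). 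So the displayed formula for $R_\mu$ is immediate from \eqref{eq:rtransdil} and the listed $R$-transforms, and the membership claim is a closure statement.

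For the "only if" direction, let $\mu\in\mathcal{M}_{\mathrm{ratio}}^{\mathrm{fid}}$, so $R_\mu(w)=R(w)=(Q(w)-P(w))/P(w)$ is rational with $R(0)=0$, and $t\cdot R(w)$ is the $R$-transform of a genuine distribution for every $t>0$. First I would show $R$ has no pole at $w=0$ and write its partial fraction decomposition over $\mathbb{C}$:
\[
R(w)=uw+c_0w^2+\sum_{\text{poles }\alpha}\sum_{j\ge1}\frac{a_{\alpha,j}}{(w-\alpha)^j}+(\text{polynomial part of degree}\le 2?).
\]
The key structural constraints come from letting $t\to0^{+}$: the measure $\mu^{\boxplus t}$ converges weakly to $\delta_0$ after recentering, and $\mu^{\boxplus t}\in\mathcal{F}_{\mathrm{rr}}^{0}$ for small $t$ (this is exactly the content flagged as Theorem~\ref{thm:infdivstars}, which I would invoke). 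I would then analyze the characteristic polynomial $\chi_t(w)$ from \eqref{for:chart}: requiring all its roots real for all small $t>0$, combined with the variance positivity \eqref{for:necessary} and the necessary condition that $\mu^{\boxplus t}$ be a probability measure, forces: (i) the polynomial part of $R$ has degree at most $2$, with the degree-$2$ coefficient $c_0\ge0$ (this is the Wigner part) and linear coefficient $u$ (the shift); (ii) every pole of $R$ is real, say at $1/u_k$ with $u_k\ne0$; (iii) each pole is simple — a higher-order pole $1/(w-1/u_k)^j$ with $j\ge2$ would make $t\cdot R$ fail to be an $R$-transform of a probability measure, which one sees by examining the local behaviour of the free convolution semigroup or, more concretely, by checking that such a term destroys the Pick-function property of the inverse; and (iv) the residue at each simple pole has the correct sign so that the term can be written as $c_k u_k w/(1-u_k w)$ with $c_k>0$ — the sign of $c_k$ is pinned down by the requirement that the Lévy measure (equivalently, the jump of the imaginary part of the relevant transform) be positive. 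Assembling (i)–(iv) gives $R_\mu(w)=uw+c_0w^2+\sum_k c_k u_k w/(1-u_k w)$, which is \eqref{for:rinfdiv}; matching this with the listed $R$-transforms identifies $\mu$ with the stated free convolution, the identification being legitimate because both sides have the same $R$-transform and $R$-transforms determine compactly supported distributions uniquely.

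For uniqueness: the decomposition is determined by $R_\mu$, since $u$ is the coefficient of $w$, $c_0$ the coefficient of $w^2$, and the pairs $(u_k,c_k)$ are read off from the poles of $R_\mu$ (located at $w=1/u_k$) and the residues there — distinct $u_k$ give distinct poles, and $c_k$ is recovered from the residue. Thus two such decompositions of the same $\mu$ have the same $R$-transform, hence the same $u$, $c_0$, and the same finite set of (pole, residue) data, hence coincide.

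\emph{Remark on the main obstacle.} The hard part is step (iii)–(iv) of the converse: ruling out higher-order poles and fixing the sign of the residues. The clean way is to use the Lévy–Khintchine representation of freely infinitely divisible laws (the free analogue, via the Bercovici–Voiculescu characterization): $R_\mu(w)=uw+\int \frac{w}{1-xw}\,d\sigma(x)$ for a finite positive measure $\sigma$ on $\mathbb{R}$ (with a $w^2$ term corresponding to an atom of $\sigma$ at $0$, yielding the Wigner part). Rationality of $R_\mu$ then forces $\sigma$ to be a finite sum of point masses, $\sigma=c_0\delta_0+\sum_k c_k\delta_{1/u_k}$ with $c_k>0$, which is precisely (i)–(iv) in one stroke. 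I would therefore structure the converse around this integral representation rather than around direct root-counting of $\chi_t$, using the latter only as a consistency check; the only genuinely new verification is that a rational Pick-type transform of this restricted form can have no non-real poles and no repeated poles, which is exactly what the finiteness and positivity of $\sigma$ deliver.
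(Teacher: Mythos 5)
Your final plan (the \emph{Remark}) is exactly the paper's proof: for compactly supported $\mu$, free infinite divisibility is equivalent to positive definiteness of the shifted cumulant sequence $(r_{n+2}(\mu))_{n\ge0}$, hence $R_{\mu}(w)/w$ is a Pick function; since it is also rational, the canonical form of rational Pick functions (Donoghue) gives $R_{\mu}(w)=uw+c_0w^2+\sum_k c_ku_kw/(1-u_kw)$ with $c_0\ge0$, $c_k>0$ and simple real poles, and uniqueness is just partial fractions. So the approach matches, and the closure argument for the ``if'' direction and the pole/residue argument for uniqueness are fine. Two corrections, though. First, your displayed representation is misstated: the kernel $w/(1-xw)$ is the one for $R_{\mu}(w)/w$, i.e. the correct statement is $R_{\mu}(w)/w=u+\int\frac{w}{1-xw}\,d\sigma(x)$, equivalently $R_{\mu}(w)=uw+\int\frac{w^{2}}{1-xw}\,d\sigma(x)$; with the formula as you wrote it (kernel $w/(1-xw)$ applied to $R_{\mu}$ itself) an atom of $\sigma$ at $0$ contributes only a linear term, and no positive $\sigma$ can produce the semicircular term $c_0w^2$, so the Wigner component would be unobtainable --- your parenthetical about the atom at $0$ is right only for the corrected kernel. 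Second, the fallback in your main text of invoking Theorem~\ref{thm:infdivstars} (that $F^{\boxplus t}\in\mathcal{F}_{\mathrm{rr}}^{0}$ for small $t$) to drive a root-counting analysis of $\chi_t$ would be circular in the paper's architecture: the proof of Theorem~\ref{thm:infdivstars} starts from the decomposition (\ref{for:rinfdiv}) established in the present theorem, so you cannot use it here. Structuring the converse around the integral representation, as you ultimately propose, avoids both issues.
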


\begin{proof}
A compactly supported distribution $\mu$ is freely infinitely divisible if and only if the
sequence $\left\{r_{n+2}(\mu)\right\}_{n=2}^{\infty}$ is positive definite,
where $r_n(\mu)$ are the \textit{free cumulants} of $\mu$, i.e. $R_{\mu}(w)=\sum_{k=1}^{\infty} r_{k}(\mu)w^k$. Therefore $R_{\mu}(w)/w$ is a Pick function, so $R_{\mu}(w)$ must be of the form (\ref{for:rinfdiv})
for some $u\in\mathbb{R}$, $c_0\ge0$, $r\ge0$, $c_1,\ldots,c_r>0$ and some distinct $u_1,\ldots,u_r\in\mathbb{R}\setminus\{0\}$, see~\cite{donoghue}.
\end{proof}

For $c_0=0$ the symbol $\mathrm{W}(0)$ denotes $\delta_0$, a term which can be ignored.
Now we will see that the class $\mathcal{F}_{\mathrm{dist}}^{\mathrm{fid}}$ can be characterized in terms of the characteristic polynomials.

\begin{theorem}\label{thm:infdivstars}
Assume that $F\in\mathcal{F}$.
Then $F\in\mathcal{F}_{\mathrm{dist}}^{\mathrm{fid}}$ if and only if there exists $t_0>0$
such that $F^{\boxplus t}\in\mathcal{F}_{\mathrm{rr}}^{0}$ for every $t\in(0,t_0)$.
\end{theorem}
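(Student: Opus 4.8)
The plan is to reduce everything to the two facts already at our disposal: the L\'evy--Khintchine description of $\mathcal{F}_{\mathrm{dist}}^{\mathrm{fid}}$ in Theorem~\ref{thm:levy} (via formula~(\ref{for:rinfdiv})), and the explicit formula~(\ref{for:chart}) for the characteristic polynomial $\chi_t(w)$ of $F^{\boxplus t}$ together with the topological description of real-rootedness from the proposition on $\mathcal{D}_n$ and Corollary~\ref{cor:topology}.

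First I would prove the ``only if'' direction. Suppose $F=F_\mu$ with $\mu\in\mathcal{M}_{\mathrm{ratio}}^{\mathrm{fid}}$. By Theorem~\ref{thm:levy}, $R_\mu(w)=uw+c_0w^2+\sum_{k=1}^r c_k u_k w/(1-u_kw)$, hence $F^{\boxplus t}=F_{\mu^{\boxplus t}}$ has $R$-transform $t\cdot R_\mu(w)$, which is still of the form~(\ref{for:rinfdiv}) (with $u,c_0,c_k$ replaced by $tu,tc_0,tc_k$), so $\mu^{\boxplus t}$ exists for every $t>0$ and $F^{\boxplus t}\in\mathcal{F}_{\mathrm{dist}}^{\mathrm{fid}}$. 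It remains to show $F^{\boxplus t}\in\mathcal{F}_{\mathrm{rr}}^{0}$ for small $t$, i.e. that $\chi_t(w)$ has only real roots for small $t>0$. Here I use~(\ref{for:chart}): $\chi_t(w)=P(w)^2+t\cdot[\cdots]$. As $t\to0^{+}$, $\chi_t(w)\to P(w)^2$. Now $P(w)^2$ is a real-rooted polynomial (a square of a real polynomial need not be real-rooted in general, but here one must observe that $\mu\in\mathcal{M}_{\mathrm{rr}}^{0}$ forces $P$ itself to be real-rooted — indeed from~(\ref{for:rinfdiv}) one reads off $P(w)=\prod_{k=1}^r(1-u_kw)$ after clearing denominators, up to the Wigner factor, so $P$ splits over $\mathbb{R}$). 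The roots of $P(w)^2$ are the $1/u_k$, each doubled, plus we must track the degree: $\deg\chi_t$ may jump down at $t=0$, but for $t$ in a punctured neighborhood of $0$ the degree is constant, say $N$, and the leading coefficient is a polynomial in $t$ nonzero near $0$. I would then argue that each double root $1/u_k$ of $P(w)^2$ splits, as $t$ grows from $0$, into two roots that move along the real axis, not into the complex plane, and any ``new'' roots coming in from infinity (when $\deg\chi_t>2\deg P$) are also real for small $t$; this is a perturbation/continuity argument using that the limiting configuration is a limit of real-rooted polynomials from the \emph{interior} on the $\mathcal{F}_{\mathrm{rr}}^{0}$ side. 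The clean way is: $\mu^{\boxplus t}$ is freely infinitely divisible and in $\mathcal{M}_{\mathrm{ratio}}$, and one checks directly from~(\ref{for:rinfdiv}) that such $F$ lies in $\mathcal{F}_{\mathrm{rr}}^{0}$ once $t$ is small enough that the $r$ simple poles dominate — in fact I claim $F^{\boxplus t}\in\mathcal{F}_{\mathrm{rr}}^{0}$ for \emph{all} $t>0$ when $r=0$ (pure Wigner plus shift, where $\chi_t(w)=1-tw^2$ up to translation), and for small $t>0$ in general by the continuity argument above.

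For the ``if'' direction, suppose there is $t_0>0$ with $F^{\boxplus t}\in\mathcal{F}_{\mathrm{rr}}^{0}$ for all $t\in(0,t_0)$. By Theorem~\ref{thm:Ndist} (via $\mathcal{F}_{\mathrm{rr}}^{0}\subseteq\mathcal{F}_{\mathrm{rr}}$), each such $F^{\boxplus t}$ equals $F_{\mu_t}$ for a genuine probability distribution $\mu_t\in\mathcal{M}_{\mathrm{ratio}}$, and $R_{\mu_t}(w)=t\cdot R(w)$ where $R(w)=w/F(w)-1$. Fix any $t_1\in(0,t_0)$; then $\mu:=\mu_{t_1}$ has $R_\mu(w)=t_1 R(w)$, and for every $s>0$ we have $s\cdot R_\mu(w)=(st_1)R(w)$, which for $st_1<t_0$ is the $R$-transform of the bona fide distribution $\mu_{st_1}$. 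Thus $\mu$ has an $n$-th free convolution root for every $n$ (take $s=1/n$), so $\mu$ is freely infinitely divisible; since $\mu\in\mathcal{M}_{\mathrm{ratio}}$, we get $\mu\in\mathcal{M}_{\mathrm{ratio}}^{\mathrm{fid}}$ and hence $R(w)=t_1^{-1}R_\mu(w)$ is of the form~(\ref{for:rinfdiv}) up to the positive scalar $t_1^{-1}$, which preserves that form. Therefore $F=F_\mu$ for some freely infinitely divisible $\mu$, i.e. $F\in\mathcal{F}_{\mathrm{dist}}^{\mathrm{fid}}$. (One should note the mild subtlety that $\mu_t$ for different $t$ are automatically consistent — $\mu_{s}\boxplus\mu_{t}=\mu_{s+t}$ when all indices are below $t_0$ — which follows from uniqueness of the distribution with a given compactly supported moment sequence, since both sides have $R$-transform $(s+t)R(w)$; this consistency is what legitimately produces the free convolution semigroup and hence infinite divisibility.)

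The main obstacle is the ``only if'' direction, specifically showing real-rootedness of $\chi_t$ for small $t>0$ rather than merely $\mathcal{N}(F^{\boxplus t})\subseteq\mathbb{R}$. The degenerate limit $\chi_0(w)=P(w)^2$ has every root a \emph{double} root, so it sits on the boundary of $\mathcal{D}_N$, and one must rule out that an arbitrarily small perturbation pushes a conjugate pair off the real axis. I expect the cleanest resolution is not a blind perturbation argument but an explicit computation of $\chi_t(w)$ for $F=F_\mu$ with $R_\mu$ given by~(\ref{for:rinfdiv}): clearing denominators, $P(w)=\prod(1-u_kw)$ and $Q(w)=P(w)+P(w)R(w)$, and~(\ref{for:chart}) should factor in a way exhibiting, for small $t$, $r$ real roots near the $1/u_k$ (split from the doubled roots but kept real by a discriminant sign that is $\le 0$ only at $t=0$), plus up to two extra real roots controlled by the $c_0w^2$ term. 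Carrying out that factorization carefully — or invoking Corollary~\ref{cor:topology} to say that since $F^{\boxplus t}\in\mathcal{F}_{\mathrm{rr}}^{0}$ is a closed condition and the $t\to0^{+}$ family stays in the relevant stratum — is the technical heart of the argument.
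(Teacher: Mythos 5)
Your ``if'' direction is fine and essentially the expected argument: $F^{\boxplus t_1/n}\in\mathcal{F}_{\mathrm{rr}}^{0}\subseteq\mathcal{F}_{\mathrm{dist}}$ gives genuine distributions $\mu_{t_1/n}$, whose $n$-fold free powers have $R$-transform $t_1R$, so by uniqueness of compactly supported distributions $\mu_{t_1}=\mu_{t_1/n}^{\boxplus n}$ is freely infinitely divisible, and Theorem~\ref{thm:levy} plus the invariance of the form (\ref{for:rinfdiv}) under positive scaling puts $F$ in $\mathcal{F}_{\mathrm{dist}}^{\mathrm{fid}}$.

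The ``only if'' direction, however, is left with a genuine gap, and you yourself flag it: you never prove that the double roots of $\chi_0(w)=P(w)^2$ split into \emph{real} pairs rather than conjugate complex pairs for small $t>0$. Neither of your two proposed remedies works as stated. The appeal to Corollary~\ref{cor:topology} (or to closedness of $\mathcal{F}_{\mathrm{rr}}^{0}\cap\mathcal{F}(p,q)$) is backwards: $P(w)^2$ has only multiple roots, so the limit point lies on the \emph{boundary} of the real-rooted region, and boundary membership says nothing about which side the one-parameter family $\chi_t=P^2+tB$ lands on; indeed, if some coefficient $c_k$ were negative the same perturbation would push a pair of roots off the axis, so no soft topological or continuity argument can succeed --- the positivity of the weights $c_k$ must enter. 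The missing quantitative input, which is exactly how the paper closes this step, is the sign evaluation of the perturbing polynomial at the roots of $P$: writing $\chi_t(w)=P(w)^2+tB(w)$ as in (\ref{for:chart}), one computes $B(1/u_k)=-c_k\prod_{i\ne k}\bigl(1-u_i/u_k\bigr)^2<0$, so $\chi_t(1/u_k)<0$ for all $t>0$, while at interlacing points $x_k$ (e.g.\ the critical points of $P$ between consecutive $1/u_k$, plus one point on each side) one has $P(x_k)\ne0$, hence $\chi_t(x_k)>0$ for $t$ small; the intermediate value theorem then yields two real roots near each $1/u_k$, and the leading-term analysis ($B$ has negative leading coefficient $-c_0\prod u_k^2 w^{2r+2}$ when $c_0>0$, resp.\ $-(c_1+\cdots+c_r)\prod u_k^2 w^{2r}$ when $c_0=0$) accounts for the remaining degree, forcing either two further real roots from $\chi_t(w)\to-\infty$ at $\pm\infty$ or an exact count $2r=\deg\chi_t$. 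Without this explicit sign and degree bookkeeping your outline does not establish $F^{\boxplus t}\in\mathcal{F}_{\mathrm{rr}}^{0}$ for small $t$, which is the heart of the theorem.
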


\begin{proof}
We can assume that $u=0$ (due to  Proposition~\ref{prop:charpoltranslation}) and that $r\ge1$.
Then
\begin{multline*}
R(w)=c_0 w^2+\sum_{k=1}^{r}\frac{c_k u_k w}{1-u_k w}\\
=\frac{c_0 w^2\prod_{k=1}^{r}(1-u_k w)+\sum_{k=1}^{r} c_k u_k w\prod_{i\ne k}(1-u_i w)}{\prod_{k=1}^{r}(1-u_k w)}
\end{multline*}
for some $c_0\ge0$, $r\ge1$, $c_1,\ldots,c_r>0$ and $u_1,\ldots,u_r\in\mathbb{R}\setminus\{0\}$.
Then
\[
1+R(w)=\frac{(c_0 w^2+1)\prod_{k=1}^{r}(1-u_k w)+\sum_{k=1}^{r} c_k u_k w\prod_{i\ne k}(1-u_i w)}{\prod_{k=1}^{r}(1-u_k w)}
\]
and
\[
F(w)=\frac{wP(w)}{Q(w)},
\]
where
\begin{align*}
P(w)&=\prod_{k=1}^{r}(1-u_k w),\\
Q(w)&=(c_0 w^2+1)\prod_{k=1}^{r}(1-u_k w)+\sum_{k=1}^{r} c_k u_k w\prod_{i\ne k}(1-u_i w).
\end{align*}
Hence the characteristic polynomial $\chi_t(w)$ of $F^{\boxplus t}$ is
\[
\chi_t(w)=P(w)^2+t B(w),
\]
where
\begin{multline*}
B(w):=\left(\prod_{k=1}^{r}(1-u_k w)-w\sum_{k=1}^{r}u_k\prod_{i\ne k}(1-u_i w)\right)\big(Q(w)-P(w)\big)\\
-w \prod_{k=1}^{r}(1-u_k w)\big(Q'(w)-P'(w)\big),
\end{multline*}
and
\[
Q(w)-P(w)=
c_0 w^2\prod_{k=1}^{r}(1-u_k w)+\sum_{k=1}^{r} c_k u_k w\prod_{i\ne k}(1-u_i w),
\]
\begin{multline*}
Q'(w)-P'(w)=2c_0 w\prod_{k=1}^{r}(1-u_k w)\\
+\sum_{k=1}^{r}u_k \left( (c_k-c_0 w^2)\prod_{i\ne k}(1-u_i w)-c_k w \sum_{i\ne k}u_i\prod_{j\ne k,i}(1-u_j w)\right).
\end{multline*}

The leading term of $wP(w)$ is $w^{r+1}\prod_{k=1}^{r}(-u_k)$. If $c_0>0$
then the leading term of $Q(w)-P(w)$ is $c_0 w^{r+2}\prod_{k=1}^{r}(-u_k)$.
In this case the leading term of $B(w)$ is
\[-w^{2r+2}c_0\prod_{k=1}^{r}u_k^2.\]
If $c_0=0$ then the leading term of $Q(w)-P(w)$ is $-w^r c\prod_{k=1}^{r}(-u_k)$, where $c:=c_1+\ldots+c_r$, and the leading term of $B(w)$
is \[-w^{2r}c\prod_{k=1}^{r}u_k^2.\]

Now observe that $P(1/u_k)=0$, and
\[
B\left(\frac{1}{u_k}\right)=-c_k \prod_{i\ne k}\left(1-\frac{u_i}{u_k}\right)^2<0
\]
for $k=1,\ldots,r$.
Put $v_k:=1/u_k$ and assume that $v_1<v_2<\ldots<v_r$. Let $x_k$
be such that $P'(x_k)=0$ and $v_k<x_k<v_{k+1}$, $1\le k<r$. In addition we set $x_0:=v_1-1$, $x_r:=v_r+1$, so that
\[x_0<v_1<x_1<v_2<\ldots<x_{r-1}<v_{r}<x_r.
\]

Now let us choose $t_0>0$ such that $\chi_{t_0}(x_k)=P(x_k)^2+t_0 B(x_k)>0$ for $k=0,\ldots,r$, and fix $t$ such that $0<t<t_0$.
Then $\chi_t(x_k)>0$, $\chi_{t}(v_k)<0$, hence there are $y_k',y_k''$ such that
\[
x_{k-1}<y_k'<v_k<y_k''<x_k\quad
\hbox{and}\quad \chi_t(y_k')=\chi_{t}(y_k'')=0
\]
for $k=1,\ldots,r$.
If, in addition, $c_0>0$ then $\lim_{w\to\pm\infty} \chi_{t_0}(w)=-\infty$ and hence there are $y_{-}<x_0$, $y_{+}>x_r$
such that $\chi_{t}(y_{-})=\chi_{t}(y_{+})=0$. Therefore all the roots of $\chi_{t}(w)$ are real.
\end{proof}

Here is an obvious consequence from the last proof.

\begin{corollary}\label{cor:fidrealsimple}
If $F\in\mathcal{F}_{\mathrm{dist}}$, $F(w)=wP(w)/Q(w)$, and the corresponding distribution is freely infinitely divisible then all the roots of $P(w)$ are real and simple.
\end{corollary}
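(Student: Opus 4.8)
The plan is to invoke the L\'evy--Khintchine description of $\mathcal{M}_{\mathrm{ratio}}^{\mathrm{fid}}$ from Theorem~\ref{thm:levy} together with the explicit computation of $P(w)$ already carried out in the proof of Theorem~\ref{thm:infdivstars}. Since $F\in\mathcal{F}_{\mathrm{dist}}$, the distribution $\mu:=\mu(F)$ lies in $\mathcal{M}_{\mathrm{ratio}}$, and by hypothesis it is freely infinitely divisible, so $\mu\in\mathcal{M}_{\mathrm{ratio}}^{\mathrm{fid}}$. Theorem~\ref{thm:levy} then supplies $u\in\mathbb{R}$, $c_0\ge0$, $c_1,\dots,c_r>0$ and pairwise distinct $u_1,\dots,u_r\in\mathbb{R}\setminus\{0\}$ with
\[
R_{\mu}(w)=uw+c_0w^2+\sum_{k=1}^{r}\frac{c_k u_k w}{1-u_k w}.
\]
If $r=0$ then, by (\ref{for:frtransform}), $P(w)\equiv1$ has no roots and there is nothing to prove, so assume $r\ge1$.

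Next I would rewrite $F_{\mu}(w)=w/\bigl(1+R_{\mu}(w)\bigr)$ over the common denominator $\prod_{k=1}^{r}(1-u_k w)$, exactly as in the proof of Theorem~\ref{thm:infdivstars}, obtaining $F_{\mu}(w)=w\widetilde P(w)/\widetilde Q(w)$ with
\[
\widetilde P(w)=\prod_{k=1}^{r}(1-u_k w),\qquad
\widetilde Q(w)=(1+uw+c_0w^2)\prod_{k=1}^{r}(1-u_k w)+\sum_{k=1}^{r}c_k u_k w\prod_{i\ne k}(1-u_i w),
\]
and $\widetilde P(0)=\widetilde Q(0)=1$. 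To conclude that $\widetilde P=P$ I must check that $\widetilde P$ and $\widetilde Q$ are relatively prime, since the representation $F(w)=wP(w)/Q(w)$ with $P(0)=Q(0)=1$ and $\gcd(P,Q)=1$ is unique. The roots of $\widetilde P$ are $1/u_1,\dots,1/u_r$, and evaluating $\widetilde Q$ at $1/u_j$ kills the first summand and leaves only the $k=j$ term of the sum (every other product $\prod_{i\ne k}(1-u_i/u_j)$ contains the factor $1-u_j/u_j=0$), giving $\widetilde Q(1/u_j)=c_j\prod_{i\ne j}(1-u_i/u_j)\ne0$ because $c_j>0$ and the $u_i$ are distinct. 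Hence no root of $\widetilde P$ is a root of $\widetilde Q$, so the fraction is already in lowest terms and $P(w)=\prod_{k=1}^{r}(1-u_k w)$.

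Finally, the roots $1/u_1,\dots,1/u_r$ of $P(w)$ are real, because the $u_k$ are real, and pairwise distinct, because the $u_k$ are pairwise distinct; thus every root of $P$ is real and simple, which is the assertion. The only point requiring any care is the relative primeness of $\widetilde P$ and $\widetilde Q$ (equivalently, the uniqueness of the pair $(P,Q)$ attached to $F$), and I expect that to be the main, though mild, obstacle; everything else is the bookkeeping already present in the proof of Theorem~\ref{thm:infdivstars}.
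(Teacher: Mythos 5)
Your proposal is correct and follows essentially the same route as the paper, which deduces the corollary directly from the computation in the proof of Theorem~\ref{thm:infdivstars} showing $P(w)=\prod_{k=1}^{r}(1-u_k w)$ with the $u_k$ real, nonzero and distinct. Your explicit verification that $\widetilde P$ and $\widetilde Q$ are relatively prime (via $\widetilde Q(1/u_j)=c_j\prod_{i\ne j}(1-u_i/u_j)\ne0$) is a worthwhile detail that the paper leaves implicit, but it does not change the argument.
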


The following example illustrates Theorem~\ref{thm:infdivstars}.

\begin{example}[$A078623$]  
Take
\[
\mu:=\mathrm{MP}(1,1)\boxplus\mathrm{W}(1)\boxplus\delta_{-1},
\]
with the moment sequence $A078623$, so that
\[
F(w)=\frac{w(1-w)}{1-w+2w^2-w^3}.
\]
Then $\mu\in\mathcal{M}_{\mathrm{ratio}}^{\mathrm{fid}}$, but one can check, that $F\notin\mathcal{F}_{\mathrm{rr}}$.
We have
\[
\chi_{t}(w)=1-2w+w^2-2tw^2+2tw^3-tw^4.
\]
Substituting $t:=(x-1)^2/(4x)$ we obtain
\[
\chi_{t}(w)=\frac{1}{4x}(2x-2xw-w^2+xw^2)(2-2w+w^2-xw^2).
\]
Note that the function $x\mapsto(x-1)^2/(4x)$ is increasing on $(1,+\infty)$
and maps $(1,+\infty)$ onto $(0,+\infty)$. For $x>1$ the polynomial $\chi_{t}(w)$
has only real roots if and only if $1<x\le2$, equivalently, if an only if $0<t\le1/8$.
Therefore, $F^{\boxplus t}\in\mathcal{F}_{\mathrm{rr}}^{0}$ if and only if $0<t\le1/8$.
In particular
\[
\chi_{1/8}(w)=\frac{1}{8}(2-w)^2(2-2w-w^2).
\]
\end{example}

\section{Examples of free deconvolution}\label{sec:deconv}

If $\mu,\mu_1,\mu_2\in\mathcal{M}_{\mathrm{c}}$ and $\mu=\mu_1\boxplus\mu_2$ then we
say that $\mu_2$ is \textit{free deconvolution} of $\mu$ and $\mu_1$ and write $\mu_2=\mu\boxminus\mu_1$. If in addition $\mu,\mu_1$ are freely infinitely divisible
then we say that $\mu_2$ is \textit{freely quasi infinitely divisible}, see \cite{hmsu2021} for thorough studies of this notion.
Here we are going to examine two families of free deconvolutions by looking for singular elements.
Other examples, coming from monotone convolutions, will be found in Section~\ref{sec:monotone}.

\subsection{Wigner law and Marchenko-Pastur law}\label{subsec:deconv:wmp}

\begin{proposition}\label{pro:deconvolutionwmp}
For $u,x\in\mathbb{R}$ there is a probability distribution $\mu$ such that
\[
R_{\mu}(w)=u^2 x^3 w^2+\frac{(1-x)^3 uw}{1-uw}.
\]
\end{proposition}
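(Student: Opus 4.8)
The strategy is to realize the proposed $R$-transform as the free $R$-transform of a concrete free convolution of distributions already known to lie in $\mathcal{M}_{\mathrm{rr}}^{0}$, and then to apply Theorem~\ref{thm:Ndist} (via Proposition~\ref{pro:rr0}) to conclude positive definiteness of $s_n(F)$. Observe that the given function decomposes as
\[
R_{\mu}(w)=u^2 x^3 w^2+\frac{(1-x)^3 u w}{1-uw}
=R_{\mathrm{W}(u^2 x^3)}(w)+R_{\mathrm{MP}(u,(1-x)^3)}(w),
\]
using the formulas $R_{\mathrm{W}(t)}(w)=tw^2$ and $R_{\mathrm{MP}(v,t)}(w)=tvw/(1-vw)$ recorded in Section~\ref{sec:inf}. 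Hence the natural candidate is
\[
\mu=\mathrm{W}(u^2 x^3)\boxplus\mathrm{MP}(u,(1-x)^3).
\]

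The first step is to address the sign/degeneracy issues in the parameters. The Wigner parameter $u^2 x^3$ is nonnegative exactly when $x\ge 0$, and the Marchenko--Pastur parameter $(1-x)^3$ is positive exactly when $x<1$; also we need $u\ne 0$ for the second term to be genuinely rational of that form. So for $0\le x<1$ and $u\ne0$ the distribution $\mu$ above exists as an element of $\mathcal{M}_{\mathrm{ratio}}^{\mathrm{fid}}$ by Theorem~\ref{thm:levy}, and $R_\mu$ is as claimed. The remaining cases must be handled separately: if $u=0$ then $R_\mu\equiv 0$ and $\mu=\delta_0$; if $x=1$ then $R_\mu(w)=u^2 w^2=R_{\mathrm{W}(u^2)}(w)$ and $\mu=\mathrm{W}(u^2)$; if $x<0$ then $u^2x^3<0$ so the Wigner term is no longer directly of semicircle type, and one either rewrites $u^2x^3 w^2$ as $(-u^2x^3)(\mathrm{i}w)^2$ and absorbs it as a dilation by an imaginary factor — which is not allowed for real distributions — or, more honestly, one forms the corresponding $F\in\mathcal{F}$ and invokes Proposition~\ref{pro:rr0} directly by computing $\chi_F(w)$ and checking real-rootedness. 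I expect the cleanest uniform argument to be the latter: write down $F(w)=w/(1+R_\mu(w))$ explicitly with $P(w)=1-uw$ and $Q(w)=(1-uw)(1+u^2x^3 w^2)+(1-x)^3 uw$, compute $\chi_F(w)$ from \eqref{for:chacharpolynomial}, and show it is real-rooted for all real $u,x$.

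The computational heart of the proof is therefore the factorization of $\chi_F(w)$. The plan is to substitute $w=v/u$ (a dilation, which by Proposition~\ref{prop:charpoltranslation} only rescales the roots and preserves real-rootedness) to reduce to a one-parameter family in $x$, then to exhibit $\chi_F$ explicitly as a product of linear and quadratic factors whose discriminants are manifestly nonnegative — most likely $\chi_F$ will factor with a repeated root, reflecting the announced fact that the corresponding $F\in\mathcal{F}_{\mathrm{rr}}^{0}$ is \emph{singular}. I anticipate this is where the bulk of the work lies, and where the specific exponents $x^3$ and $(1-x)^3$ (rather than generic coefficients) become essential: they are presumably chosen precisely so that $\chi_F(w)$ acquires a double root, and the cubes suggest that after the substitution the double root sits at a value like $w=1/(ux)$ or similar.

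**The main obstacle.** The genuine difficulty is not conceptual but algebraic: verifying that $\chi_F(w)$ has only real roots for \emph{all} real $u$ and $x$, including $x\ge 1$ and $x<0$ where the probabilistic $\boxplus$-decomposition into Wigner and Marchenko--Pastur laws breaks down. For the range $0\le x\le1$, $u\ne0$ one gets the result essentially for free from Theorem~\ref{thm:infdivstars} and Theorem~\ref{thm:levy}; outside that range one must fall back on an explicit real-rootedness check of the quartic (or lower-degree) polynomial $\chi_F$. I would organize the final write-up as: (i) exhibit the $\boxplus$-decomposition and note it settles $0\le x\le 1$; (ii) compute $\chi_F(w)$ in closed form and display its factorization into real-rooted pieces, thereby covering all $(u,x)\in\mathbb{R}^2$ via Proposition~\ref{pro:rr0}; (iii) remark that the repeated factor shows $F$ is singular, foreshadowing the deconvolution discussion.
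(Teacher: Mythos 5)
Your plan follows the paper's own route: the paper writes $R(w)=aw^2+\frac{buw}{1-uw}$, computes $\chi_{F}(w)=1-2uw-aw^2+u^2w^2-bu^2w^2+2auw^3-au^2w^4$, and after the substitution $a=u^2x^3$, $b=(1-x)^3$ obtains precisely the factorization you anticipated, $\chi_{F}(w)=(1-uxw)^2\left(1-2uw+2uxw-u^2xw^2\right)$, whose quadratic factor has discriminant $4u^2(x^2-x+1)\ge0$, so Proposition~\ref{pro:rr0} applies uniformly for all real $u,x$ with no case split into $0\le x\le1$ versus the rest. The only gap in your write-up is that this explicit computation, which you defer as the ``computational heart,'' is the entire content of the paper's proof, so it must actually be carried out (expand and match coefficients, then check the discriminant) rather than left as an expectation.
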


We can write symbolically
\[
\mu=\mathrm{W}\left(u^2 x^3\right)\boxplus\mathrm{MP}\left(u, (1-x)^3\right),
\]
for example $\mu=\mathrm{W}(-1)\boxplus\mathrm{MP}(1,8)$ means that $\mu\boxplus\mathrm{W}(1)=\mathrm{MP}(1,8)$.
Note that $\mu$ is freely infinitely divisible if and only if $0\le x\le1$.

\begin{proof}
Take
\[
R(w)=aw^2+\frac{buw}{1-uw},
\]
so that
\[
F(w)=\frac{w(1-uw)}{1-uw+buw+aw^2-au w^3},
\]
\[
\chi_{F}(w)=1 - 2 u w - a w^2 + u^2 w^2 - b u^2 w^2 + 2 a u w^3 - a u^2 w^4.
\]
Putting
\begin{equation}
a:=u^2 x^3,\qquad b:=(1-x)^3
\end{equation}
we get
\[
\chi_{F}(w)=(1-u x w)^2 (1 - 2 u w + 2 u x w - u^2 x w^2),
\]
so all the roots of $\chi_{F}(w)$ are real.
\end{proof}

\begin{example}[$A007297$]
Taking
\[
F(w)=\frac{w(1-w)}{(1+w)^3},
\]
we get sequence $A007297$ as $s_n(F)$. Then
\[
R(w)=-4w-w^2+\frac{8w}{1-w},
\]
$\chi_{F}(w)=(1+w)^2(1-4w+w^2)$, so $s_n(F)$ is positive definite
as the moment sequence of a distribution $\mu$, which can be symbolically denoted as
\[
\mu=\delta_{-4}\boxplus\mathrm{W}(-1)\boxplus\mathrm{MP}(1,8),
\]
translation of $\mathrm{W}(-1)\boxplus\mathrm{MP}(1,8)$ by $-4$.
\end{example}

\subsection{Two Marchenko-Pastur laws}\label{subsec:deconv:mpmp}

\begin{proposition}\label{pro:deconvolutionmpmp}
For $u,v,x\in\mathbb{R}$, with $u,v\ne0$, $u\ne v$, there exists a probability distribution $\mu$ such that
\[
R_{\mu}(w)=\frac{(u-x)^3}{u^2(u-v)}\frac{uw}{1-uw}+
\frac{(v-x)^3}{v^2(v-u)}\frac{vw}{1-vw}.
\]
\end{proposition}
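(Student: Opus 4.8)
The plan is to follow verbatim the strategy of Proposition~\ref{pro:deconvolutionwmp}: write down the element $F\in\mathcal{F}$ whose $R$-transform is the prescribed function, compute its characteristic polynomial $\chi_F$, exhibit an explicit real-rooted factorization, and invoke Proposition~\ref{pro:rr0}. Concretely, set
\[
a:=\frac{(u-x)^3}{u^2(u-v)},\qquad b:=\frac{(v-x)^3}{v^2(v-u)},\qquad
R(w):=\frac{auw}{1-uw}+\frac{bvw}{1-vw},
\]
and $F(w):=w/(1+R(w))=wP(w)/Q(w)$ with $P(w)=(1-uw)(1-vw)$ and $Q(w)=P(w)+auw(1-vw)+bvw(1-uw)$; note $P(0)=Q(0)=1$. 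The only roots of $P$ are $1/u,1/v$, and $Q(1/u)=a(u-v)/u$, $Q(1/v)=b(v-u)/v$, so $P,Q$ are relatively prime precisely when $x\notin\{u,v\}$. In the degenerate case $x=v$ (resp.\ $x=u$) one of $a,b$ vanishes and the other equals $(u-v)^2/u^2\ge0$ (resp.\ $(v-u)^2/v^2\ge0$), so $R$ is the $R$-transform of $\mathrm{MP}\big(u,(u-v)^2/u^2\big)$ (resp.\ $\mathrm{MP}\big(v,(v-u)^2/v^2\big)$) and $\mu$ exists; hence from now on assume $x\notin\{u,v\}$, so that $F\in\mathcal{F}$.

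Next I compute $\chi_F$. From $1+R=Q/P$ one checks the identity $\chi_F(w)=P(w)^2\big(1+R(w)-wR'(w)\big)$ (just clear denominators in $1+R-wR'$, using formula~(\ref{for:chacharpolynomial})), and since $\frac{auw}{1-uw}-\frac{auw}{(1-uw)^2}=\frac{-au^2w^2}{(1-uw)^2}$ and likewise with $v,b$,
\[
1+R(w)-wR'(w)=1-\frac{au^2w^2}{(1-uw)^2}-\frac{bv^2w^2}{(1-vw)^2}.
\]
Inserting $au^2=(u-x)^3/(u-v)$ and $bv^2=(v-x)^3/(v-u)$ gives
\[
\chi_F(w)=(1-uw)^2(1-vw)^2-\frac{(u-x)^3w^2}{u-v}(1-vw)^2-\frac{(v-x)^3w^2}{v-u}(1-uw)^2,
\]
and the crucial point is the factorization
\[
\chi_F(w)=(1-xw)^2\Big[\,1+2(x-u-v)w+\big(3uv-(u+v)x\big)w^2\,\Big],
\]
which is a polynomial identity in $u,v,x,w$ (verify it by comparing the five coefficients, or observe that $w=1/x$ is a double root and divide it out, matching the resulting quadratic quotient).

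Finally, real-rootedness. The factor $(1-xw)^2$ contributes only the real root $1/x$ (and none if $x=0$). For $Q_2(w):=\big(3uv-(u+v)x\big)w^2+2(x-u-v)w+1$ the discriminant equals $4\big[(x-u-v)^2-3uv+(u+v)x\big]=4\big[x^2-(u+v)x+u^2-uv+v^2\big]$; viewed as a quadratic in $x$, the bracket has discriminant $(u+v)^2-4(u^2-uv+v^2)=-3(u-v)^2<0$ (here $u\ne v$), so it is strictly positive, whence $Q_2$ has two distinct real roots whenever $3uv-(u+v)x\ne0$; if that leading coefficient vanishes then $Q_2$ is affine (its two coefficients cannot both vanish, since $u^2-uv+v^2=0$ would force $u=v=0$, excluded as $u,v\ne0$) and still real-rooted. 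Thus all roots of $\chi_F$ are real, Proposition~\ref{pro:rr0} gives that $s_n(F)$ is positive definite, and $\mu=\mu(F)$ is the desired distribution, with $R_\mu=R$.

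The only genuinely non-routine ingredient is discovering the factorization of $\chi_F$; once it is written down, its verification is a finite coefficient check and the real-rootedness of the quadratic factor reduces to the elementary inequality $x^2-(u+v)x+u^2-uv+v^2>0$ for $u\ne v$. The coprimality bookkeeping (the case $x\in\{u,v\}$) is a minor but necessary point, needed only so that Proposition~\ref{pro:rr0} is applicable.
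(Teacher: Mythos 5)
Your proof is correct and follows essentially the same route as the paper: set up $F$ from the prescribed $R$-transform, compute $\chi_F$, exhibit the factorization $\chi_F(w)=(1-xw)^2\bigl(1+2(x-u-v)w+(3uv-(u+v)x)w^2\bigr)$, and conclude via Proposition~\ref{pro:rr0}. Your added details — the explicit discriminant computation $x^2-(u+v)x+u^2-uv+v^2>0$ for the quadratic factor and the coprimality/degenerate cases $x\in\{u,v\}$ — are correct refinements of steps the paper leaves implicit.
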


We can write symbolically
\[
\mu=\mathrm{MP}\left(u,\frac{(u-x)^3}{u^2(u-v)}\right)\boxplus\mathrm{MP}\left(v,\frac{(v-x)^3}{v^2(v-u)}\right).
\]
If $u<v$ then $\mu$ is freely infinitely divisible if and only if $u\le x\le v$.

\begin{proof}
Take
\[
R(w)=\frac{auw}{1-uw}+\frac{bvw}{1-vw},
\]
so that
\[
F(w)=\frac{w(1-uw)(1-vw)}{1+(a+b-u-v)w-(av+bu)w^2+uvw^2}.
\]
The characteristic polynomial is
\begin{multline*}
\chi_{F}(w)=1-2(u+v)w+(u^2+v^2+4uv-au^2-bv^2)w^2\\
-2uv(u+v-au-bv)w^3+u^2v^2(1-a-b)w^4.
\end{multline*}
If we make substitution:
\begin{equation}\label{eq:weightsab}
a:=\frac{(u-x)^3}{u^2(u-v)},\qquad
b:=\frac{(v-x)^3}{v^2(v-u)},
\end{equation}
then
\[
\chi_{F}(w)=(1-xw)^2\left(1+2(x-u-v)w+(3uv-xu-xv)w^2\right),
\]
and all the roots are real.
\end{proof}

Note that in both propositions, \ref{pro:deconvolutionwmp} and \ref{pro:deconvolutionmpmp}, the parameters can be chosen in such a way that the sum of weights $a+b$ is negative.

\begin{example}[$A085614$, $A250886$]
If we take $x=0,u=1,v=2$ then \[F(w)=w(1-w)(1-2w)\] and, symbolically, $\mu=\mathrm{MP}(1,-1)\boxplus\mathrm{MP}(2,2)$,
with the moment sequence $a_n=\frac{2^n\cdot(3n)!!}{(n+1)! n!!}$, indexed as $A085614$ in OEIS.
Note also, that taking $x=0,u=-1,v=2$ we get $\mu=\mathrm{MP}(1/3,-1)\boxplus\mathrm{MP}(2/3,2)$,
a freely infinitely divisible distribution, with $A250886$ as the moment sequence.
\end{example}

\section{Polynomials as the $R$-transforms}\label{sec:rtransform}

Polynomials which are free $R$-transforms of probability distributions were studied by
Chistyakov and G\"otze~\cite{chistyakov2011}, who proved the following theorem:

\begin{theorem}
Let $R(w)$ be a polynomial with real coefficients, with $R(0)=0$, and let $\Omega$
denote the component of
\[
\left\{w\in\mathbb{C}^{+}:\mathrm{Im}\big(w+wR(1/w)\big)>0\right\}
\]
which contains $\infty$,
Then $R(w)$ is the free $R$-transform of some probability distribution if and only if every Jordan curve, contained in $\mathbb{C}^{+}\cup\mathbb{R}$ and connecting $0$ and $\infty$, contains a point from the boundary of $\Omega$.
\end{theorem}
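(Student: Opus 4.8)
The plan is to recast the statement as a question about a single rational map, reduce that to a property of $\Omega$, and close the gap by a planar-topology argument. Put $F(w):=w/(1+R(w))$ and
\[
\phi(w):=w+wR(1/w)=\frac{1}{F(1/w)}.
\]
Near $\infty$ one has $\phi(w)=w+r_{1}+O(1/w)$ with $r_{1}:=R'(0)$, so $\phi$ fixes $\infty$ with local degree $1$; if $\deg R\ge 2$ then also $\phi(0)=\infty$ (the cases $\deg R\le 1$ are trivial). The bridge to free probability is the identity
\[
G_{\mu}^{-1}(w)=\frac{1}{F(w)}=\phi(1/w),
\]
valid whenever $\mu$ is the distribution attached to $F$: the compositional inverse $D_{\mu}$ of $F$ satisfies $D_{\mu}(z)=G_{\mu}(1/z)$, and $F\circ D_{\mu}=\mathrm{id}$ gives $\phi\big(1/G_{\mu}(z)\big)=z$. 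Hence $R$ is the free $R$-transform of a probability distribution if and only if $\phi$ admits a right inverse $\Phi$, defined on a neighbourhood of $\infty$, extending holomorphically to all of $\mathbb{C}^{+}$, mapping $\mathbb{C}^{+}$ into $\mathbb{C}^{+}$, and satisfying $\Phi(z)-z=O(1)$ at $\infty$: for ``if'', set $G:=1/\Phi$, note $G\colon\mathbb{C}^{+}\to\mathbb{C}^{-}$ with $zG(z)\to 1$, apply the Nevanlinna representation to obtain $G=G_{\mu}$ for a probability distribution $\mu$, and read off $R_{\mu}=R$; for ``only if'', $\Phi:=1/G_{\mu}$ works. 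This is the mechanism used in the proof of Theorem~\ref{thm:Ndist}, but now without assuming $\mathcal{N}(F)\subseteq\mathbb{R}$.

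Next I would show that such a $\Phi$ exists if and only if $\phi$ maps $\Omega$ biholomorphically onto $\mathbb{C}^{+}$. If $\Phi$ exists then $\Phi(\mathbb{C}^{+})$ is an open connected subset of $\{w\in\mathbb{C}^{+}:\mathrm{Im}\,\phi(w)>0\}$ containing a neighbourhood of $\infty$ (since $\Phi(z)\sim z$), hence $\Phi(\mathbb{C}^{+})\subseteq\Omega$, and $\phi$ is biholomorphic from $\Phi(\mathbb{C}^{+})$ onto $\mathbb{C}^{+}$. One then checks that $\phi|_{\Omega}\colon\Omega\to\mathbb{C}^{+}$ is proper: a sequence in $\Omega$ that accumulates at a point of $\partial\Omega$ on the Riemann sphere is pushed by $\phi$ toward $\mathbb{R}\cup\{\infty\}$, because a finite accumulation point $w_{0}$ is either a pole of $\phi$ or satisfies $\mathrm{Im}\,\phi(w_{0})=0$ (it cannot have $\mathrm{Im}\,\phi(w_{0})>0$, for then $w_{0}\in\Omega$), while $\phi(\infty)=\infty$. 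A proper holomorphic map onto the simply connected domain $\mathbb{C}^{+}$ is a branched covering of some finite degree $d$, and since $\Omega$ is connected, $d\ge 2$ would force a branch point. So $d=1$, i.e. $\phi$ conformal on $\Omega$, is equivalent to the local inverse of $\phi$ at $\infty$ continuing single-valuedly over all of $\mathbb{C}^{+}$, which is exactly the existence of $\Phi$.

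Finally I would prove that $\phi|_{\Omega}$ has degree one if and only if $\partial\Omega$ separates $0$ from $\infty$ in $\mathbb{C}^{+}\cup\mathbb{R}\cup\{\infty\}$, which is the stated criterion, since a simple arc from $0$ to $\infty$ avoiding $\partial\Omega$ is the same as a connected subset of $(\mathbb{C}^{+}\cup\mathbb{R}\cup\{\infty\})\setminus\partial\Omega$ joining $0$ and $\infty$. As $\phi$ has real coefficients, $\{w:\mathrm{Im}\,\phi(w)=0\}$ is a real-algebraic curve symmetric under conjugation; $\partial\Omega$ is made of arcs of this curve in $\mathbb{C}^{+}$ together with sub-intervals of $\mathbb{R}\cup\{\infty\}$, and $\phi$ is real on $\partial\Omega$. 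Using the argument principle for $\phi$ on $\Omega$ (equivalently, the boundary correspondence and prime ends of the branched cover $\phi|_{\Omega}$) one shows $d=1$ precisely when the ``outer'' part of $\partial\Omega$ is a single Jordan curve through $\infty$ that $\phi$ carries homeomorphically onto $\mathbb{R}\cup\{\infty\}$, and that this holds if and only if $\partial\Omega$ separates $0$ from $\infty$. The point $0$ enters because $\phi(0)=\phi(\infty)=\infty$: near $z=\infty$ the multivalued inverse of $\phi$ has one branch with value near $\infty$, namely $\Phi$, and a distinct branch with value near $0$, so an arc from $0$ to $\infty$ missing $\partial\Omega$ would allow $\Phi$ to be continued into the wrong branch, producing monodromy and hence $d\ge 2$; conversely extra sheets of $\phi|_{\Omega}$ supply such an arc.

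The hard part is this last equivalence: converting ``$d\ge 2$'' into a concrete arc from $0$ to $\infty$ in $(\mathbb{C}^{+}\cup\mathbb{R}\cup\{\infty\})\setminus\partial\Omega$, and conversely. It requires controlling the fine structure of the semialgebraic set $\partial\Omega$ near $\mathbb{R}$ and near the distinguished points $0$ and $\infty$, handling poles of $\phi$ lying on $\mathbb{R}$, and making the boundary correspondence for the (a priori non-Jordan) domain $\Omega$ precise; by comparison, the first two steps are routine, properness of $\phi|_{\Omega}$ being their only mildly technical point. One should also record, on the necessity side, that when $\mu$ exists the domain $\Omega$ is itself a Jordan domain, so the criterion is not vacuously phrased; this too follows from $\phi|_{\Omega}$ being a conformal equivalence with $\mathbb{C}^{+}$.
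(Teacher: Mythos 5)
You should first note that the paper itself contains no proof of this statement: it is quoted verbatim from Chistyakov and G\"otze \cite{chistyakov2011} as background for Section~\ref{sec:rtransform}, so there is no internal argument to compare against; your proposal has to stand on its own. Its preparatory steps do stand: the identity $\phi(w)=1/F(1/w)$, the equivalence (via the Nevanlinna representation and the relation $1+R(G_\mu(z))=zG_\mu(z)$, i.e.\ the paper's (\ref{eq:freerm})) between ``$R$ is a free $R$-transform'' and the existence of a holomorphic right inverse $\Phi\colon\mathbb{C}^{+}\to\mathbb{C}^{+}$ of $\phi$ with $\Phi(z)\sim z$, the properness of $\phi|_{\Omega}\colon\Omega\to\mathbb{C}^{+}$, and the observation that a global section of a proper branched cover over the connected base forces degree one (the image of $\Phi$ is open and closed in $\Omega$) are all correct and can be written out rigorously along the lines you indicate; this is the same mechanism as in the proof of Theorem~\ref{thm:Ndist}.

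The genuine gap is the step you yourself flag as ``the hard part'': the equivalence between $\deg(\phi|_{\Omega})=1$ and the stated topological criterion that every Jordan curve in $\mathbb{C}^{+}\cup\mathbb{R}$ joining $0$ and $\infty$ meets $\partial\Omega$. That equivalence is not a routine tail end — it is the actual content of the theorem once the standard reductions are made, and in your text neither implication is proved. The monodromy heuristic is not yet an argument: a $0$--$\infty$ arc avoiding $\partial\Omega$ lives in the $w$-plane, whereas monodromy obstructing the continuation of $\Phi$ must be produced in the $z$-plane, so one needs a precise boundary correspondence for the (a priori non-Jordan, possibly non-locally-connected on $\mathbb{R}$) domain $\Omega$, control of $\partial\Omega$ near $\mathbb{R}$ and near the two $\phi$-preimages of $\infty$ (namely $\infty$ and, when $\deg R\ge2$, the point $0$), and a converse construction extracting such an arc from the existence of a second sheet. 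None of this is supplied. A small symptom that the local analysis is not yet pinned down: since $\phi(w)=w+r_{1}+r_{2}/w+\cdots+r_{k}/w^{k-1}$, its only finite pole is at $w=0$, so ``handling poles of $\phi$ lying on $\mathbb{R}$'' is a non-issue, while the real difficulty (the boundary behaviour of the degree count at $0$ and $\infty$) is exactly what remains open. As it stands the proposal is a plausible programme, close in spirit to the Chistyakov--G\"otze analysis of univalence of $1/F(1/w)$ on $\Omega$, but it does not prove the theorem.
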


In this section we will assume that $R(w)$ is a polynomial, with $R(0)=0$,
$F$ is the corresponding element of $\mathcal{F}$, so that $F(w)=w/\big(1+R(w)\big)$
and $F\in\mathcal{F}(0,q)$ for some $q\ge1$.
Then the characteristic system (\ref{eq:charsystem}) of equations reduces to
\begin{equation}\label{eq:charsystemrpoly}
\begin{split}
w&=z\left(1+R(w)\right),\\
1&=z R'(w).
\end{split}
\end{equation}
and the characteristic polynomial is
\[
\chi_{F}(w)=1+R(w)-wR'(w).
\]
We assume, with no loss of generality, that $R'(0)=0$.
Moreover, if $R(w)$ is the free $R$-transform of a probability distribution $\mu$ then $R''(0)/2$ is the variance of $\mu$, so we can assume that $R''(0)>0$.
We will study conditions for which $F\in\mathcal{F}_{\mathrm{rr}}^{0}$. We will see that if $\deg R(w)=3$ then $F\in\mathcal{F}_{\mathrm{rr}}^{0}$ if and only if $F\in\mathcal{F}_{\mathrm{dist}}$, however if $\deg R(w)=4$ then it is not the case, as one can see on Figure~\ref{fig:galaxy}.

\subsection{Special case: $\deg R(w)=3$}\label{subsec:r3}

\begin{example}
If $R(w)=3w^2+w^3$ then $\chi_{F}(w)=(1+w)^2(1-2w)$ and $F(w)=w/(1+3w^2+w^3)\in\mathcal{F}_{\mathrm{rr}}^{0}$.
Hence there is a distribution $\mu$ on $\mathbb{R}$ such that $F=F_{\mu}$.
The corresponding sequence $s_n(F)$ is $A120984$.
More generally, taking $R(w)=3uw+3w^2+w^3$, the corresponding distribution $\mu_{u}$ is translation of $\mu$ by $3u$
and the coefficients of the moments of $\mu_{u}$ form array $A120981$.
\end{example}

\begin{proposition}\label{prop:rpolynomial3}
Assume that $R(w)=uw+bw^2+cw^3$, $b>0$. Then $F\in\mathcal{F}_{\mathrm{rr}}^{0}$ if and only if $27c^2\le b^3$.
\end{proposition}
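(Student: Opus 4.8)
The plan is to compute the characteristic polynomial $\chi_F(w)$ explicitly and to recognize the condition ``$\chi_F$ has only real roots'' as a discriminant inequality. First I would substitute $R(w)=uw+bw^2+cw^3$ into the formula $\chi_F(w)=1+R(w)-wR'(w)$ from Section~\ref{sec:rtransform}. Since $R'(w)=u+2bw+3cw^2$, we get $wR'(w)=uw+2bw^2+3cw^3$, and hence
\[
\chi_F(w)=1+(uw+bw^2+cw^3)-(uw+2bw^2+3cw^3)=1-bw^2-2cw^3.
\]
Notice that the linear term $u$ has cancelled, which is consistent with Proposition~\ref{prop:charpoltranslation}: translation (here by $u/?$, the parameter $u$) does not affect $\chi_F$. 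So the question reduces to: for which $b>0$ and $c\in\mathbb{R}$ does the cubic $g(w):=-2cw^3-bw^2+1$ (equivalently $\chi_F(w)$) have three real roots?

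The main step is to analyze real-rootedness of this cubic via its discriminant. If $c=0$ then $\chi_F(w)=1-bw^2$ has the two real roots $\pm 1/\sqrt{b}$ (since $b>0$), and $27c^2=0\le b^3$ holds, so the statement is true in this degenerate case (here $\deg\chi_F\le p+q-2$ style situation, but real-rootedness still holds). For $c\ne0$, I would write the discriminant of the cubic $-2cw^3-bw^2+0\cdot w+1$. Using the standard formula for $\Delta$ of $\alpha w^3+\beta w^2+\gamma w+\delta$ with $\alpha=-2c$, $\beta=-b$, $\gamma=0$, $\delta=1$:
\[
\Delta=18\alpha\beta\gamma\delta-4\beta^3\delta+\beta^2\gamma^2-4\alpha\gamma^3-27\alpha^2\delta^2
=-4(-b)^3-27(-2c)^2=4b^3-108c^2=4(b^3-27c^2).
\]
A real cubic has three real roots (counted with multiplicity) if and only if $\Delta\ge0$, i.e. $b^3-27c^2\ge0$, which is exactly $27c^2\le b^3$. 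This proves the equivalence: $\chi_F$ has only real roots $\iff 27c^2\le b^3$, which by definition means $F\in\mathcal{F}_{\mathrm{rr}}^0$ $\iff 27c^2\le b^3$.

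I do not expect any serious obstacle; the only point requiring mild care is making sure the discriminant sign convention is applied correctly (three real roots $\Leftrightarrow \Delta\ge 0$ for a real cubic), and handling the boundary case $27c^2=b^3$, where $\chi_F$ has a multiple real root — this still lies in $\mathcal{F}_{\mathrm{rr}}^0$ by our definition (only \emph{real}-rootedness is required, not simplicity), so the ``if'' direction includes equality, consistent with the statement. Alternatively, if one prefers to avoid quoting the discriminant formula, one can substitute $w=1/v$ and study $v^3-bv-2c$ (up to sign), or directly examine the critical points of $\chi_F$: $\chi_F'(w)=-2bw-6cw^2=-2w(b+3cw)$ vanishes at $w=0$ and $w=-b/(3c)$, and then the condition that the two local extreme values $\chi_F(0)=1$ and $\chi_F(-b/(3c))=1-b^3/(27c^2)$ have opposite signs (or one is zero) gives $1-b^3/(27c^2)\le 0$, i.e. $27c^2\le b^3$ — the same answer, obtained more transparently.
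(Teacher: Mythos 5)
Your proof is correct, and after the common first step (computing $\chi_F(w)=1+R(w)-wR'(w)=1-bw^2-2cw^3$, with the linear term $u$ cancelling, exactly as in the paper) you settle the real-rootedness of this cubic by a different device than the paper does. The paper writes a hypothetical real-rooted $\chi_F$ as $(1-v_1w)(1-v_2w)(1-v_3w)$, so that $v_1+v_2+v_3=0$, $v_1v_2+v_1v_3+v_2v_3=-b$, $v_1v_2v_3=2c$, and then determines the range of $c$ on this constraint set by a Lagrange-multiplier-type computation (the Jacobian determinant $\tfrac12(v_1-v_2)(v_1-v_3)(v_2-v_3)$ forces extrema at points with a repeated $v_i$, giving the range $\bigl[-\sqrt{b^3/27},\sqrt{b^3/27}\,\bigr]$, which yields both implications at once). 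You instead invoke the discriminant of the explicit cubic, $\Delta=4(b^3-27c^2)$, with the standard criterion $\Delta\ge0$ for all roots real, or equivalently your critical-value argument: $\chi_F'(w)=-2w(b+3cw)$, $\chi_F(0)=1>0$, $\chi_F(-b/(3c))=1-b^3/(27c^2)$, and three real roots occur exactly when the product of the critical values is $\le 0$. Both routes are complete and handle the boundary and the $c=0$ case correctly; yours is more elementary and self-contained for this specific cubic, while the paper's symmetric-function/extremum argument is the kind of computation that transfers to situations where the characteristic polynomial is not given in such a simple closed form (compare the proof of Proposition~\ref{pr:r4c0} and the analysis in Subsection~\ref{subsec:r4}).
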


It follows from Theorem~\ref{thm:cg} that the condition $27c^2\le b^3$ is also necessary for $F$ to belong to $\mathcal{F}_{\mathrm{dist}}$, so that the sets $\mathcal{F}_{\mathrm{dist}}\cap\mathcal{F}(0,3)$ and $\mathcal{F}_{\mathrm{rr}}^{0}\cap\mathcal{F}(0,3)$ coincide.

\begin{proof} We have $\chi_{F}(w)=1-bw^2-2cw^3$.
If $F\in\mathcal{F}_{\mathrm{rr}}^{0}$ then
\[\chi(w)
=(1-v_1 w)(1-v_2 w)(1-v_3 w)\]
for some $v_1,v_2,v_3\in\mathbb{R}$.
Put
\begin{align*}
g_1(v_1,v_2,v_3)&:=v_1+v_2+v_3,\\
g_2(v_1,v_2,v_3)&:=v_1 v_2+v_1 v_3+v_2 v_3+b,\\
f(v_1,v_2,v_3)&:=v_2 v_2 v_3/2.
\end{align*}
Then
\[\det\left(\nabla g_1,\nabla g_2,\nabla f\right)=\frac{1}{2}(v_1-v_2)(v_1-v_3)(v_2-v_3)
\]
and hence it is easy to see that under the condition $g_1(v_1,v_2,v_3)=g_2(v_1,v_2,v_3)=0$ the range of $f$ is $\left[-\sqrt{b^3/27},\sqrt{b^3/27}\right]$.
\end{proof}

For example, taking $F(w)=\frac{w}{(1+w)^2(1+2w)}$ we have $R(w)=4w+5w^2+2w^3$, so the corresponding sequence $s_{n}(F)$,
denoted $A003168$ in OEIS, is positive definite.
Another example, taking $F(w)=\frac{w}{(1+w)(1+w+w^2)}$ we obtain $A106228$ as $s_n(F)$.
This sequence is not positive definite: $R(w)=2w+2w^2+w^3$ and $\det\left(s_{i+j}(F)\right)_{i,j=0}^{5}=-3374<0$.

\subsection{Special case: $\deg R(w)=4$}\label{subsec:r4}

Assume that
\begin{equation}
R(w)=b w^2 +c w^3+dw^4,
 \end{equation}\label{eq:rtransorm4}
 $F(w)=w/(1+R(w))$. Then
\[
\chi_{F}(w)=1-b w^2-2 c w^3-3 d w^4
\]
and
\[
\chi_{F}'(w)=-2w\left(b+3c w+6d w^2\right).
\]
Therefore a necessary condition for $F$ to belong to $\mathcal{F}_{\mathrm{rr}}^{0}$ is
\begin{equation}
3c^2\ge 8b d.
\end{equation}

We will see that on the
borderline of this class there are some elements in $\mathcal{F}_{\mathrm{dist}}$.

\begin{example}
Assume that $3c^2=8bd$ and put $t_0:=27c^2/(2b^3)$. Then
\[
\chi_{t_0}(w)=\frac{1}{16b^4}\left(2b+3cw\right)^3\left(2b-9cw\right).
\]
Hence if $27c^2\le2b^3$  (equivalently: $36d\le b^2$) then $F\in\mathcal{F}_{\mathrm{dist}}$.
\end{example}

\subsubsection{Special subcase: $c=0$}

Let us confine ourselves for a moment to the case $c=0$, so that $R(w)=bw^2+dw^4$ and $\chi_{F}(w)=1-bw^2-3dw^4$. Then $F\in\mathcal{F}_{\mathrm{rr}}^{0}$ if and only if $-b^2\le12b\le0$ and if $-b^2<12b<0$ then all the roots of $\chi(w)$ are simple.

If $4d=b^2$ then
\[
Q(w)=1+R(w)=\left(1+\frac{b}{2}w^2\right)^2,
\]
\[
\chi_{F}(w)=\frac{1}{4}\left(2+bw^2\right)\left(2-3bw^2\right).
\]
The solutions of the characteristic system (\ref{eq:charsystem}) are
\[
(w,z)=\left(\pm\sqrt{\frac{2}{3b}},\pm\sqrt{\frac{27}{128b}}\right)
\]
so $F\in\mathcal{F}_{\mathrm{rr}}$. The point $(c,d)=(0,1/4)$ is exactly the top of the ``baloon'' in Figure~\ref{fig:galaxy}.

In this case we are able to prove the following proposition:

\begin{proposition}\label{pr:r4c0}
Let $R(w)=bw^2+dw^4$, $b>0$, and let $F(w)=w/(1+R(w))$ be the corresponding element of $\mathcal{F}$. Then
\begin{itemize}
    \item if $-b^2\le12d\le3b^2$ then $F\in\mathcal{F}_{\mathrm{dist}}$,
    \item $F\in\mathcal{F}_{\mathrm{rr}}^{0}$ if and only if $-b^2\le12d\le0$,
    \item if $4d=b^2$ then $F\in\mathcal{F}_{\mathrm{rr}}\setminus\mathcal{F}_{\mathrm{rr}}^{0}$.
\end{itemize}
\end{proposition}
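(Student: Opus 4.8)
The plan is to analyze the three claims by studying, via the free power $F^{\boxplus t}$, the characteristic polynomial
\[
\chi_t(w)=P(w)^2+t\,B(w),
\]
where $P(w)=1+\tfrac{b}{2}w^2$ (since $4d=b^2$ corresponds to $Q=P^2$, but in general $Q(w)=1+bw^2+dw^4$, so $P\equiv 1$ and $\chi_t$ must be computed directly from \eqref{for:chart}). Concretely, with $R(w)=bw^2+dw^4$ we have $P(w)=1$, $Q(w)=1+bw^2+dw^4$, and \eqref{for:chart} gives
\[
\chi_t(w)=1+t\big(bw^2+dw^4-2bw^2-4dw^4\big)=1-tbw^2-3tdw^4,
\]
so $\chi_t$ is the characteristic polynomial of $F^{\boxplus t}$ and $\chi_1=\chi_F$. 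Since $\chi_t(w)$ is a biquadratic in $w$, set $s=w^2$; then $\chi_t$ has only real roots in $w$ iff $1-tbs-3tds^2$ has two real roots, both $\ge 0$. The discriminant condition is $t^2b^2+12td\ge 0$, i.e. $tb^2+12d\ge 0$, and the product/sum-of-roots sign conditions (using $b>0$, $t>0$) force $d\le 0$ for both roots in $s$ to be nonnegative. Taking $t=1$ this already yields the second bullet: $F\in\mathcal{F}_{\mathrm{rr}}^0$ iff $-b^2\le 12d\le 0$, and when $-b^2<12d<0$ the discriminant is strictly positive so all four roots in $w$ are simple.

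For the first bullet I would invoke Theorem~\ref{thm:Ndist}: it suffices to show $\mathcal{N}(F)\subseteq\mathbb{R}$ when $-b^2\le 12d\le 3b^2$. By the characteristic system, any $z_0\in\mathcal{N}(F)$ arises from a root $w_0$ of $\chi_F(w)=1-bw^2-3dw^4$ with $z_0=F(w_0)=w_0/Q(w_0)$; since $Q$ has real coefficients, if $w_0$ is real then $z_0$ is real. So the task reduces to: whenever $12d\le 3b^2$ (and $12d\ge -b^2$), every root $w_0$ of $1-bw^2-3dw^4$ that can produce a valid $z_0$ is real — but more simply, I would argue directly that $D(z)=F^{-1}(z)$ extends to a Pick function. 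The cleanest route: note $R(w)/w = bw+dw^3$ is an odd real polynomial, and I would show that for the relevant range of $d$ the function $1+R(w)$ is nonvanishing off $\mathbb{R}$ in a way that makes $D$ a Pick function — or, alternatively, identify $F$ as lying on the boundary/interior of a region already shown to be in $\mathcal{F}_{\mathrm{dist}}$ via a continuity argument (Corollary~\ref{cor:topology}) together with the already-exhibited boundary example at $4d=b^2$. In fact the surgical approach is: the map $d\mapsto F$ traces a segment in $\mathcal{F}(0,4)$; at $d=0$ we get the Wigner law $\mathrm{W}(b)\in\mathcal{M}_{\mathrm{rr}}^0$, and I expect one can show $F\in\mathcal{F}_{\mathrm{dist}}$ for $0<12d\le 3b^2$ by exhibiting $F$ as a free convolution or by an explicit Pick-function verification of $D(1/z)$.

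For the third bullet, at $4d=b^2$ we have $Q(w)=(1+\tfrac{b}{2}w^2)^2$, hence (after cancelling the common factor — but $P=1$ here, so no cancellation issue) $\chi_F(w)=\tfrac14(2+bw^2)(2-3bw^2)$, whose roots $w=\pm\sqrt{2/(3b)}$ are real but the factor $2+bw^2$ contributes a multiple root of $Q$: indeed $w=\pm\mathrm{i}\sqrt{2/b}$ is a double root of $Q$, so by Remark~\ref{remark:qmultiple} it lies in the root set of $\chi_F$ without yielding a point of $\mathcal{N}(F)$. Computing the solutions of the characteristic system as already displayed in the excerpt, $(w,z)=\big(\pm\sqrt{2/(3b)},\pm\sqrt{27/(128b)}\big)$, all lie in $\mathbb{R}$, so $\mathcal{N}(F)\subseteq\mathbb{R}$ and $F\in\mathcal{F}_{\mathrm{rr}}$; but $\chi_F$ has the non-real multiple root $\pm\mathrm{i}\sqrt{2/b}$... wait — for $F\in\mathcal{F}_{\mathrm{rr}}^0$ we need \emph{all} roots of $\chi_F$ real, and $2+bw^2$ has non-real roots, so $F\notin\mathcal{F}_{\mathrm{rr}}^0$. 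This gives $F\in\mathcal{F}_{\mathrm{rr}}\setminus\mathcal{F}_{\mathrm{rr}}^0$, completing the third bullet (and incidentally showing the strictness $\mathcal{F}_{\mathrm{rr}}^0\subsetneq\mathcal{F}_{\mathrm{rr}}$ at this point).

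**Main obstacle.** The routine parts are the biquadratic root analysis (bullets two and three). The genuinely delicate step is the first bullet for the range $0<12d\le 3b^2$, where $\chi_F$ genuinely has non-real roots (so Proposition~\ref{pro:rr0} is unavailable) yet we still claim $F\in\mathcal{F}_{\mathrm{dist}}$: here one must either verify $\mathcal{N}(F)\subseteq\mathbb{R}$ by a careful case check that the non-real roots of $\chi_F$ are precisely those that fail to extend to solutions of the full characteristic system \eqref{eq:charsystem} (likely because they are multiple roots of $Q$ or otherwise excluded), or else prove directly that $D(1/z)$ is a Cauchy transform via a Pick-function argument — and pinning down exactly why $3b^2$ (i.e. $12d=3b^2$, equivalently $d=b^2/4$) is the sharp upper endpoint, matching the $4d=b^2$ borderline, is the crux.
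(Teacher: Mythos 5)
Your treatment of the second and third bullets is fine and agrees with the paper (which in fact regards those as already established in the surrounding text: its proof opens with ``we need to prove only the first statement''). The genuine gap is exactly where you place it, but it is worse than a delicate step left open: your primary route for the first bullet cannot work. For $0<12d<3b^2$ (i.e.\ $0<4d<b^2$) the purely imaginary roots $w_0$ of $\chi_F(w)=1-bw^2-3dw^4$ satisfy $Q'(w_0)\ne 0$ (that degeneracy occurs only at $4d=b^2$, where they become double roots of $Q$), and then $z_0=1/Q'(w_0)$ is a nonzero purely imaginary solution of the characteristic system (\ref{eq:charsystem}); hence $\mathcal{N}(F)\not\subseteq\mathbb{R}$ and $F\notin\mathcal{F}_{\mathrm{rr}}$ on that whole open range. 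So Theorem~\ref{thm:Ndist} is unavailable, and your fallback via Corollary~\ref{cor:topology} does not help either: closedness of $\mathcal{F}_{\mathrm{dist}}\cap\mathcal{F}(0,4)$ lets you pass to limits of distributions, not from the single boundary point $4d=b^2$ to the open range $0<4d<b^2$.

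The paper closes this gap with a short scaling trick that your ``exhibit $F$ as a free convolution'' remark gestures at but does not carry out. Start from the borderline case $R(w)=bw^2+\tfrac{b^2}{4}w^4$, which lies in $\mathcal{F}_{\mathrm{dist}}$ because (third bullet) it is in $\mathcal{F}_{\mathrm{rr}}$ and Theorem~\ref{thm:Ndist} applies. For $0<t\le 1$ set
\[
R_1(w):=\frac{1}{t^{2}}\,R(tw)=b w^{2}+\frac{t^{2}b^{2}}{4}\,w^{4},
\]
which is of the form $s\cdot R(\tau w)$ with $s=1/t^{2}\ge 1$ and $\tau=t\ne 0$; Corollary~\ref{cor:rdilation} (free power at least $1$ combined with dilation) then gives $F_1\in\mathcal{F}_{\mathrm{dist}}$. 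As $t$ runs over $(0,1]$ the coefficient $t^{2}b^{2}/4$ sweeps all $d\in(0,b^{2}/4]$, i.e.\ all $0<12d\le 3b^{2}$; together with your real-rooted analysis for $-b^{2}\le 12d\le 0$ this yields the first bullet. You would need to supply this (or an equivalent positive argument, e.g.\ a direct Pick-function verification) before your proof of the proposition is complete.
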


\begin{proof}
We need to prove only the first statement. For $R(w)=bw^2+bw^4/4$ and $0<t\le1$
take \[R_1(w):=\frac{1}{t^2}R(tw)=bw^2 +\frac{t^2 d^2}{4}w^4.\]
Now we conclude by applying Corollary~\ref{cor:rdilation}.
\end{proof}

\subsubsection{Arbitrary $c$}
Now we come back to arbitrary real $c$.
First we describe the singular elements in this class.

\begin{proposition}\label{pr:r4sing}
For $R(w)=b w^2 +c w^3+dw^4$ and for $F(w)=w/(1+R(w))$ the characteristic function $\chi_{F}(w)$ has a multiple real root if and only if
\[
c=v(b-2v^2),\qquad
d=\frac{1}{3}v^2(b-3v^2)
\]
for some $v\in\mathbb{R}$. In this case
\[
\chi_{F}(w)=(1+vw)^2\left(1-2vw+3v^2 w^2-bw^2\right).
\]
Such $F$ belongs to $\mathcal{F}_{\mathrm{rr}}^{0}$ if and only if $2v^2\le b$.
\end{proposition}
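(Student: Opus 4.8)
The plan is to work directly with the characteristic polynomial and to exploit the fact that it carries no linear term. For a polynomial $R$ of the present shape one has, as recorded in Section~\ref{sec:rtransform},
\[
\chi_{F}(w)=1+R(w)-wR'(w)=1-bw^{2}-2cw^{3}-3dw^{4}.
\]

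For the ``only if'' direction, suppose $\chi_{F}$ has a multiple real root $w_{0}$. Since $\chi_{F}(0)=1$ we have $w_{0}\neq0$, so we may set $v:=-1/w_{0}$ and factor $\chi_{F}(w)=(1+vw)^{2}h(w)$ with $h$ of degree $\le2$ and $h(0)=1$; write $h(w)=1+\beta w+\gamma w^{2}$. Expanding $(1+vw)^{2}h(w)$ and matching coefficients with the display above, the vanishing of the coefficient of $w$ forces $\beta=-2v$, and then the coefficients of $w^{2}$, $w^{3}$, $w^{4}$ give successively $\gamma=3v^{2}-b$, then $c=v(b-2v^{2})$, and finally $d=\tfrac{1}{3}v^{2}(b-3v^{2})$; this also identifies $h(w)=1-2vw+(3v^{2}-b)w^{2}$, the asserted factorization. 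For the ``if'' direction I would simply reverse this: given the two formulas for $c$ and $d$, a direct expansion confirms
\[
(1+vw)^{2}\bigl(1-2vw+(3v^{2}-b)w^{2}\bigr)=1-bw^{2}-2cw^{3}-3dw^{4}=\chi_{F}(w),
\]
so for $v\neq0$ the number $-1/v$ is a root of $\chi_{F}$ of multiplicity at least two. The boundary value $v=0$ forces $c=d=0$ and $\chi_{F}(w)=1-bw^{2}$, a situation in which the top two coefficients of $\chi_{F}$ vanish; I would flag this as the degenerate case already separated off in Section~\ref{sec:posdef} rather than fold it into the statement.

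The last assertion is then read off the factorization: $(1+vw)^{2}$ always contributes the real double root $-1/v$, so all roots of $\chi_{F}$ are real precisely when the quadratic factor $q(w)=(3v^{2}-b)w^{2}-2vw+1$ is real-rooted. If $3v^{2}\neq b$ this is a genuine quadratic with discriminant $4v^{2}-4(3v^{2}-b)=4(b-2v^{2})$, hence real-rooted exactly when $b-2v^{2}\ge0$; if $3v^{2}=b$ then $q$ is linear or constant and, since $b=3v^{2}\ge0$, the inequality $2v^{2}=\tfrac{2}{3}b\le b$ holds automatically. In every case $F\in\mathcal{F}_{\mathrm{rr}}^{0}\iff 2v^{2}\le b$.

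None of this is deep. The one genuine point is the observation that the absent $w^{1}$-term of $\chi_{F}$ pins the second factor down to a one-parameter family, which is exactly what makes a single real parameter $v$ describe the whole singular locus; the only place that calls for care is the bookkeeping around the degenerate cases $3v^{2}=b$ (where $\chi_{F}$ drops degree) and $v=0$.
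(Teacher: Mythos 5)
Your proof is correct and takes essentially the same route as the paper: both parametrize the singular locus by the position $w_0=-1/v$ of the double root, you by matching coefficients in the factorization $\chi_F(w)=(1+vw)^2h(w)$ (using the absent $w$-term), the paper by solving the linear system $\chi_F(w_0)=\chi_F'(w_0)=0$ for $c,d$ and then substituting $w_0=-1/v$. Your variant has the minor merit of yielding the stated factorization directly and of spelling out the discriminant analysis and the degenerate cases ($v=0$ and $3v^2=b$) that the paper leaves as ``straightforward.''
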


\begin{proof}
Solving the system of equations: $\chi(w_0)=0$, $\chi'(w_0)=0$ with respect to $c,d$ we get
\[
c=\frac{2-bw_0^2}{w_0^3},\qquad
d=\frac{bw_0^2-3}{3w_0^4}.
\]
Then substitute $w_0:=-1/v$. The rest of the proof is straightforward.
\end{proof}

Now we are able to describe the elements of $\mathcal{F}_{\mathrm{rr}}^{0}$.

\begin{proposition}
For fixed $b>0$ denote by $\mathcal{L}(b)$ the set of all $(c,d)\in\mathbb{R}^2$ such that
\[
c=v(b-2v^2),\qquad
d=\frac{1}{3}v^2(b-3v^2)
\]
for some $v\in\mathbb{R}$ such that $2v^2\le b$.
Then $\mathcal{L}(b)$ is a Jordan curve in $\mathbb{R}^2$.

For $R(w)=b w^2 +c w^3+dw^4$, with $b>0$ fixed, and for $F(w)=w/(1+R(w))$
we have $F\in\mathcal{F}_{\mathrm{rr}}^{0}$ if and only if $(c,d)$ belongs either to $\mathcal{L}(b)$
or to the interior region of $\mathcal{L}(b)$.
\end{proposition}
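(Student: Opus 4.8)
The plan is to prove the two assertions separately. For the first, parametrise $\mathcal{L}(b)$ by $\phi\colon[-\sqrt{b/2},\sqrt{b/2}]\to\mathbb{R}^{2}$, $\phi(v)=\bigl(v(b-2v^{2}),\,\tfrac{1}{3}v^{2}(b-3v^{2})\bigr)$, and check that $\phi(\sqrt{b/2})=\phi(-\sqrt{b/2})=(0,-b^{2}/12)$, so that $\phi$ descends to the circle obtained by identifying the two endpoints. Its first coordinate $c(v)=v(b-2v^{2})$ is odd and, on the given interval, nonnegative exactly for $v\ge0$; hence the images of $[0,\sqrt{b/2}]$ and of $[-\sqrt{b/2},0]$ meet only along $\{c=0\}$, i.e.\ only at $\phi(0)=(0,0)$ and $\phi(\pm\sqrt{b/2})$. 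It therefore suffices to show that $\phi$ is injective on $[0,\sqrt{b/2}]$: if $\phi(v_{1})=\phi(v_{2})$ with $v_{1}\ne v_{2}$ there, dividing the equation of first coordinates by $v_{1}-v_{2}$ and that of second coordinates by $v_{1}^{2}-v_{2}^{2}$ gives $b=2(v_{1}^{2}+v_{1}v_{2}+v_{2}^{2})$ and $b=3(v_{1}^{2}+v_{2}^{2})$, which together force $(v_{1}-v_{2})^{2}=0$, a contradiction. A continuous injection from a circle into the plane is a homeomorphism onto its image, so $\mathcal{L}(b)$ is a Jordan curve.

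For the second assertion, put $S(b):=\{(c,d)\in\mathbb{R}^{2}:F\in\mathcal{F}_{\mathrm{rr}}^{0}\}$, that is, the set of $(c,d)$ for which $\chi_{F}(w)=1-bw^{2}-2cw^{3}-3dw^{4}$ has only real roots. First I would observe that $S(b)$ is compact: it is the preimage of the closed set $\mathcal{D}_{4}$ under the affine map $(c,d)\mapsto(0,-b,-2c,-3d)$, hence closed, and writing a real-rooted $\chi_{F}$ as $\prod(1-v_{i}w)$ (with $\deg\chi_F\in\{2,3,4\}$, since the $w^{2}$-coefficient is $-b\ne0$) and comparing coefficients of $w$ and $w^{2}$ gives $\sum v_{i}=0$ and $\sum v_{i}^{2}=2b$, so $|v_{i}|\le\sqrt{2b}$ and hence $c,d$ stay bounded. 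Next, by Proposition~\ref{pr:r4sing}, $\chi_{F}$ has a multiple root exactly when $(c,d)$ lies on the extended curve $\widetilde{\mathcal{L}}(b):=\{\phi(v):v\in\mathbb{R}\}$, and then $\chi_{F}(w)=(1+vw)^{2}\bigl(1-2vw+(3v^{2}-b)w^{2}\bigr)$; the quadratic factor is real-rooted iff its discriminant $4(b-2v^{2})$ is nonnegative, so $\phi(v)\in S(b)$ if and only if $2v^{2}\le b$, i.e.\ if and only if $\phi(v)\in\mathcal{L}(b)$. In particular $\mathcal{L}(b)\subseteq S(b)$ and $\widetilde{\mathcal{L}}(b)\cap S(b)=\mathcal{L}(b)$.

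The heart of the argument is the inclusion $\partial S(b)\subseteq\mathcal{L}(b)$. Since the boundary of the preimage of a set under a continuous map is contained in the preimage of the boundary, $\partial S(b)$ maps into $\partial\mathcal{D}_{4}$; by the boundary description of $\mathcal{D}_{n}$ recalled earlier, any $(c,d)\in\partial S(b)$ either makes $\chi_{F}$ have a multiple root or satisfies $-2c=-3d=0$, and in both cases $(c,d)\in\widetilde{\mathcal{L}}(b)$ (note $(0,0)=\phi(0)$). As $\partial S(b)\subseteq S(b)$, the previous paragraph yields $\partial S(b)\subseteq\widetilde{\mathcal{L}}(b)\cap S(b)=\mathcal{L}(b)$. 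Now invoke the Jordan curve theorem: $\mathbb{R}^{2}\setminus\mathcal{L}(b)=I\sqcup E$ with $I$ the bounded (interior) region and $E$ unbounded, both open and connected. Because $\partial S(b)\subseteq\mathcal{L}(b)$, the sets $S(b)\cap E=\mathrm{int}\,S(b)\cap E$ and $(\mathbb{R}^{2}\setminus S(b))\cap E$ are open and partition the connected set $E$; since $E$ is unbounded while $S(b)$ is bounded, $S(b)\cap E=\emptyset$. The same dichotomy applied to $I$ gives either $I\subseteq S(b)$ or $I\cap S(b)=\emptyset$; the second alternative would force $S(b)=\mathcal{L}(b)$, which is false because, e.g., $(0,d)\in S(b)\setminus\mathcal{L}(b)$ for small $d<0$ (the polynomial $1-bw^{2}-3dw^{4}$ being real-rooted, while $\mathcal{L}(b)$ meets the line $c=0$ only at $(0,0)$ and $(0,-b^{2}/12)$). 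Hence $I\subseteq S(b)$ and $S(b)=\mathcal{L}(b)\cup I$, which is $\mathcal{L}(b)$ together with its interior region, as claimed.

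The step I expect to require the most care is precisely this passage $\partial S(b)\subseteq\mathcal{L}(b)$ together with the ensuing topological bookkeeping: one must combine the boundary description of $\mathcal{D}_{4}$ with Proposition~\ref{pr:r4sing} without worrying whether the two-dimensional slice is tangent to $\partial\mathcal{D}_{4}$ anywhere (the ``boundary of a preimage lies in the preimage of the boundary'' principle bypasses this), and then argue by connectedness that the interior region of the Jordan curve is filled by $S(b)$ rather than avoided by it, for which the explicit witness $(0,d)$ with small $d<0$ in $S(b)\setminus\mathcal{L}(b)$ is the decisive ingredient.
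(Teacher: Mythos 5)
Your proof is correct, and it follows the same overall strategy as the paper: the first half (injectivity of the parametrization $v\mapsto(c,d)$ via the two symmetric-function identities) is literally the paper's computation, and the second half rests on the same two pillars, namely the description of singular points in Proposition~\ref{pr:r4sing} and a connectedness argument anchored by a witness point $(0,d)$ with $-b^2<12d<0$. The difference is in execution: the paper compresses the topology into one sentence, citing Corollary~\ref{cor:topology} (boundary points of $\mathcal{F}_{\mathrm{rr}}^{0}\cap\mathcal{F}(p,q)$ have $\chi_F$ with a multiple root or deficient degree) and asserting that the witness lies in the interior region of $\mathcal{L}(b)$, whereas you work directly in the $(c,d)$-slice: you pull back $\partial\mathcal{D}_4$ via ``boundary of a preimage lies in the preimage of the boundary,'' you prove compactness of the admissible set $S(b)$ from $\sum v_i=0$, $\sum v_i^2=2b$ to exclude the unbounded complementary component, and you only need the witness to lie off the curve (not inside it), since the dichotomy on the interior region plus $S(b)\ne\mathcal{L}(b)$ already forces $I\subseteq S(b)$. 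Your version buys completeness on two points the paper leaves implicit: how the exterior component is ruled out (your compactness step), and why ``multiple root'' in the $\mathcal{D}_4$ boundary description may be fed into Proposition~\ref{pr:r4sing}, which speaks of multiple \emph{real} roots (your observation that $\partial S(b)\subseteq S(b)$, where $\chi_F$ is real-rooted, settles this); the paper's version, in exchange, is shorter because Corollary~\ref{cor:topology} already packages the slice-versus-ambient-space bookkeeping.
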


\begin{proof}
Assume that
\[
v_1(b-2v_1^2)=v_2(b-2v_2^2),\qquad
v_1^2(b-3v_1^2)=v_2^2(b-3v_2^2).
\]
Then
\[
(v_1-v_2)\left(b-2v_1^2-2v_1 v_2-2v_2^2\right)=0,\quad
\left(v_1^2-v_2^2\right)\left(b-3v_1^2-3v_2^2\right)=0.
\]
If $v_1\ne v_2$ then either $v_2=-v_1$ and then the first equation implies $2v_1^2=2v_2^2=b$, or
\[
b=2v_1^2+2v_1 v_2+2v_2^2\quad\hbox{and}\quad
b=3v_1^2+3v_2^2,
\]
but this in turn implies $v_1=v_2$. Therefore $\mathcal{L}(b)$ is a Jordan curve.

Observe that if $-b^2<12d<0$ then the point $(0,d)$ lies in the interior region of $\mathcal{L}(b)$,
so the second statement follows from Corollary~\ref{cor:topology}.
\end{proof}

Now we quote a result due to Chistyakov and G\"{o}tze \cite{chistyakov2011}:

\begin{theorem}\label{thm:cg}
A sequence $(\kappa_1,1,\kappa_3,\kappa_4,0,0,\ldots)$ is the free cumulant sequence of some probability measure if and only if $(\kappa_3,\kappa_4)\in\mathcal{D}_1\cup\mathcal{D}_2$, where
$\mathcal{D}_1$ consists of such $(x,y)\in\mathbb{R}^2$ that
\[
|x|\le\frac{1}{3\sqrt{6}}\sqrt{1+\sqrt{1-36y}}\left(2-\sqrt{1-36y}\right),
\qquad\frac{-1}{12}\le y\le\frac{1}{36},
\]
and $\mathcal{D}_2$ consists of such $(x,y)\in\mathbb{R}^2$ that
\[
|x|\le2\sqrt{6y\left(\sqrt{y}-2y\right)},\qquad\frac{1}{36}<y\le\frac{1}{4}.
\]
\end{theorem}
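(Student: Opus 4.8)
The plan is to re‑derive Theorem~\ref{thm:cg} inside the framework of this paper as far as the available tools reach, and to isolate the analytic input that must be borrowed from \cite{chistyakov2011}. By Proposition~\ref{prop:charpoltranslation} translation changes neither $\chi_{F}$ nor membership in $\mathcal{F}_{\mathrm{dist}}$, so I would first assume $\kappa_1=0$; then $(\kappa_1,1,\kappa_3,\kappa_4,0,\dots)$ is the free cumulant sequence of a probability measure if and only if
\[
F(w)=\frac{w}{1+w^2+\kappa_3 w^3+\kappa_4 w^4}
\]
belongs to $\mathcal{F}_{\mathrm{dist}}$, i.e.\ we are exactly in the case $b=1$, $c=\kappa_3$, $d=\kappa_4$ of Section~\ref{subsec:r4} (the degenerate sub‑case $\kappa_4=0$ being covered by Proposition~\ref{prop:rpolynomial3}, where $\mathcal{F}_{\mathrm{rr}}^{0}$ and $\mathcal{F}_{\mathrm{dist}}$ already coincide, with boundary $|\kappa_3|=1/(3\sqrt3)$, consistent with $\mathcal{D}_1$ along $y=0$). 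The claim to prove becomes $F\in\mathcal{F}_{\mathrm{dist}}\iff(\kappa_3,\kappa_4)\in\mathcal{D}_1\cup\mathcal{D}_2$.

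For the implication $(\kappa_3,\kappa_4)\in\mathcal{D}_1\cup\mathcal{D}_2\Rightarrow F\in\mathcal{F}_{\mathrm{dist}}$ I would split the parameter region. On the closed region bounded by the Jordan curve $\mathcal{L}(1)$ of Section~\ref{sec:rtransform} one has $F\in\mathcal{F}_{\mathrm{rr}}^{0}\subseteq\mathcal{F}_{\mathrm{dist}}$ by Proposition~\ref{pro:rr0}; eliminating the parameter $v$ from $c=v(1-2v^2)$, $d=\tfrac13 v^2(1-3v^2)$ under the constraint $2v^2\le1$ --- the first relation being a cubic in $v$, solvable by radicals --- identifies this region as precisely the part of $\mathcal{D}_1$ lying below the upper arc of $\mathcal{L}(1)$, its outer boundary being the arc $|x|=\tfrac{1}{3\sqrt6}\sqrt{1+\sqrt{1-36y}}\,(2-\sqrt{1-36y})$ of $\partial\mathcal{D}_1$ and its topmost value of $\kappa_4$ being $1/36$.

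It then remains to cover $(\mathcal{D}_1\cup\mathcal{D}_2)\setminus\mathcal{F}_{\mathrm{rr}}^{0}$, i.e.\ the region lying above that arc; there $\chi_{F}$ has a conjugate pair of non‑real roots, so $F\notin\mathcal{F}_{\mathrm{rr}}^{0}$, Proposition~\ref{pro:rr0} is silent, and the measures must be produced by hand. The segment $\{\kappa_3=0,\ 0\le\kappa_4\le1/4\}$ is handled by the dilation‑plus‑free‑power argument of Proposition~\ref{pr:r4c0} (via Corollary~\ref{cor:rdilation}), starting from the degenerate point $4\kappa_4=1$, where $1+R(w)=(1+w^2/2)^2$ and $\mathcal{N}(F)\subset\mathbb{R}$. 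For general $\kappa_3$ I would rerun the Pick‑function argument of Theorem~\ref{thm:Ndist} in a refined form that tolerates the single conjugate pair in $\mathcal{N}(F)$, checking that the relevant branch of $D(z)$ still maps $\mathbb{C}^{+}$ into $\mathbb{C}^{+}$ --- equivalently, exhibiting $\mu$ as a density with at most one atom, which is the construction of \cite{chistyakov2011}.

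The genuinely hard direction is $(\kappa_3,\kappa_4)\notin\mathcal{D}_1\cup\mathcal{D}_2\Rightarrow F\notin\mathcal{F}_{\mathrm{dist}}$. The sufficient conditions developed in this paper cannot close this gap, and the only route I see is the Chistyakov--G\"otze criterion recalled at the start of Section~\ref{sec:rtransform}: one must describe the connected component $\Omega\ni\infty$ of $\{w\in\mathbb{C}^{+}:\mathrm{Im}\bigl(w+wR(1/w)\bigr)>0\}$ for the quartic $R(w)=w^2+\kappa_3 w^3+\kappa_4 w^4$, compute its boundary $\partial\Omega$ as a real‑algebraic curve, and decide for exactly which $(\kappa_3,\kappa_4)$ every Jordan arc in $\overline{\mathbb{C}^{+}}$ joining $0$ to $\infty$ is forced to meet $\partial\Omega$. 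Translating this blocking condition into inequalities between $\kappa_3$ and $\kappa_4$ is what carves out the complement of $\mathcal{D}_1\cup\mathcal{D}_2$, and in particular accounts for the ``balloon'' $\mathcal{D}_2$ (see Figure~\ref{fig:galaxy}) seated above the line $\kappa_4=1/36$. I expect this $\Omega$‑analysis to be the main obstacle; within the present setup the clean option is simply to cite \cite{chistyakov2011} for it.
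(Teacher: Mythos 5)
The paper does not prove Theorem~\ref{thm:cg} at all: it is quoted verbatim as a result of Chistyakov and G\"otze, with \cite{chistyakov2011} as the source, and is then used as external input (e.g.\ to draw Figure~\ref{fig:galaxy} and to compare $\mathcal{D}_1\cup\mathcal{D}_2$ with the region bounded by $\mathcal{L}(1)$). Your plan, stripped to its load-bearing parts, does the same thing: both the necessity direction and the sufficiency in the part of $\mathcal{D}_1\cup\mathcal{D}_2$ lying above the real-rooted ``triangle'' are, by your own admission, outsourced to \cite{chistyakov2011}, so in substance your route and the paper's coincide. What you add on top is a correct consistency check of how far the paper's own machinery reaches: the reduction to $\kappa_1=0$ via Proposition~\ref{prop:charpoltranslation}, the agreement with Proposition~\ref{prop:rpolynomial3} on the line $y=0$ (both give $|x|\le 1/(3\sqrt{3})$), the identification of the region bounded by $\mathcal{L}(1)$ with the part of $\mathcal{D}_1$ below the arc parametrized by $v^2\le 1/6$ (the paper records exactly this parametrization of $\partial\mathcal{D}_1$ in the remark following the theorem, with $1/6\le u^2\le 1/2$), and the segment $\{\kappa_3=0,\ 0\le\kappa_4\le 1/4\}$ via Proposition~\ref{pr:r4c0} and Corollary~\ref{cor:rdilation}. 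This is useful as a sanity check, but be aware that it cannot be upgraded to a proof with the tools of this paper: your proposed ``refined Pick-function argument tolerating one conjugate pair in $\mathcal{N}(F)$'' is not supplied by Theorem~\ref{thm:Ndist} (whose continuation argument genuinely needs $\mathcal{N}(F)\subseteq\mathbb{R}$), and deciding which branch of $D$ still maps $\mathbb{C}^{+}$ to $\mathbb{C}^{+}$ when non-real critical values are present is precisely the content of the $\Omega$-analysis in \cite{chistyakov2011}; the same goes for the entire necessity direction. So the honest summary is: partial in-house verification plus citation, versus the paper's pure citation — acceptable, and essentially equivalent.
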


\begin{figure}[htp]
    \centering
    \includegraphics[width=15cm]{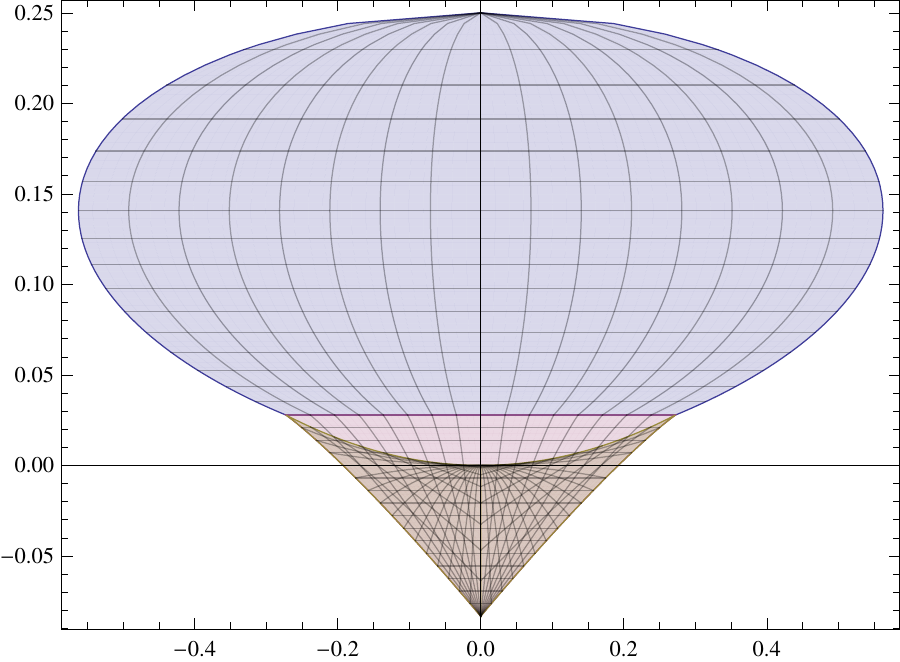}
    \caption{The sets $\mathcal{D}_1$, $\mathcal{D}_2$ and $\mathcal{L}(b)$ with the interior region (dark ``triangle'') for $b=1$}
    \label{fig:galaxy}
\end{figure}

Note that putting
\[
u:=\pm\sqrt{\frac{1+\sqrt{1-36y}}{6}}
\]
the points $(x,y)$ at the left and right border of $\mathcal{D}_1$ can be described as
\[
x=u(2u^2-1),\qquad
y=\frac{1}{3}u^2\left(1-3u^2\right),
\]
with ${1/6}\le u^2\le {1/2}$.


Now we will discuss a family of examples, which includes that corresponding to the top of the domain $\mathcal{D}_2$ on Figure~\ref{fig:galaxy}.

\begin{example}
Put
\begin{equation}\label{eq:rfussr}
R(w):=(1+w^2)^r-1,
\end{equation}
so that
\[
\chi(w)=\left(1+w^2-2rw^2\right)\left(1+w^2\right)^{r-1}
\]
(see Remark~\ref{remark:qmultiple}).
The solutions of (\ref{eq:charsystemrpoly}) are
\[
\left(\frac{\pm1}{\sqrt{2r-1}},\frac{\pm1}{\sqrt{2r-1}}\left(\frac{2r-1}{2r}\right)^{r}\right)
\]
for $r=1,2,\ldots$, so $F\in\mathcal{F}_{\mathrm{rr}}$.
Let $M(z)$ be the corresponding moment generating function. Then, by (\ref{eq:freerm}) and (\ref{eq:rfussr}),
\[
M(z)=\left(1+z^2 M(z)^2\right)^r.
\]
Put $M_1(z):=M(z)^{1/r}$, so that $M_1(z)$ satisfies
\[
M_1(z)=1+z^2 M_1(z)^{2r},
\]
which means, that $M_1(z)=\mathcal{B}_{2r}(z^2)$, where $\mathcal{B}_{p}(z)$ denotes the generating function for the Fuss numbers $\binom{np+1}{n}\frac{1}{np+1}$ (see \cite{concrete,mlotkowski2010doc}) and $M(z)=\mathcal{B}_{2r}(z^2)^{r}$.
From the Lambert formula, (5.60) in~\cite{concrete},
\begin{equation}\label{eq:rfussgf}
M(z)=\sum_{n=0}^{\infty}\binom{2nr+r}{n}\frac{r}{2nr+r}z^{2n}.
\end{equation}
Actually, a more general statement is true, namely for each real $r\ge1/2$ there is a symmetric probability distribution $\mu$ such that the $R$-transform $R_{\mu}(w)$ is given by (\ref{eq:rfussr}) and the moment generating function $M_{\mu}(w)$ is given by (\ref{eq:rfussgf}), c.f. formula~(5.4) in~\cite{mlotkowski2010doc}.
The densities of the corresponding distribution (more precisely, of the distribution $\nu$ on $[0,+\infty)$ which satisfies $\int_{\mathbb{R}}f(x^2)\mu(dx)=\int_{\mathbb{R}}f(x)\nu(dx)$ for any continuous function $f$) can be found in \cite{forrester2015,mpz2013}, see also \cite{liupego2016,MSU2020}.
Putting $r=1,2,3,4,5$ we get sequences $A000108$, $A069271$, $A212072$, $A234463$, $A234528$ respectively.
\end{example}

\section{Eulerian polynomials}\label{sec:euler}

The purpose of this section is to generalize Example~\ref{ex:rnn1}.
It is well known (see~\cite{petersen,concrete}) that for a given integer $k\ge1$ we have
\[
\sum_{n=1}^{\infty} n^k w^n=\frac{w E_k(w)}{(1-w)^{k+1}},
\]
where the \textit{Eulerian polynomials} are given by the following recurrence:
$E_0(w)=1$ and
\begin{equation}\label{eulerianpolynomials}
E_k(w)=\big(kw-w+1\big)E_{k-1}(w)+w(1-w)E'_{k-1}(w)
\end{equation}
for $k\ge1$.
We will need some auxiliary polynomials defined by
\begin{equation}
\widetilde{E}_k(w):=
(k+1)E_{k}(w)+(1-w)E_{k}'(w).
\end{equation}

\begin{lemma}
The polynomials $\widetilde{E}_k(w)$ satisfy the following recurrence:
$\widetilde{E}_0(w)=1$ and
\begin{equation}
\widetilde{E}_k(w)=(kw-w+2)\widetilde{E}_{k-1}(w)+w(1-w)\widetilde{E}'_{k-1}(w)
\end{equation}
for $k\ge1$.
\end{lemma}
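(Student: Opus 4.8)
The plan is to prove the recurrence for $\widetilde{E}_k(w)$ by induction on $k$, feeding the defining relation $\widetilde{E}_k(w)=(k+1)E_k(w)+(1-w)E_k'(w)$ together with the Eulerian recurrence (\ref{eulerianpolynomials}) into a direct computation. The base case $k=0$ is immediate: $E_0(w)=1$ gives $\widetilde{E}_0(w)=1\cdot 1+(1-w)\cdot 0=1$. For the inductive step there is in fact no need to invoke the inductive hypothesis for $\widetilde{E}_{k-1}$ at all: the claimed identity relates $\widetilde{E}_k$ and $\widetilde{E}_{k-1}$, and since both are explicitly expressed through $E_k,E_k'$ and $E_{k-1},E_{k-1}'$, one can hope to verify it as a polynomial identity using only the definitions and (\ref{eulerianpolynomials}). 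So the real content is a single algebraic manipulation, not an induction at all; I would present it as such, perhaps noting the base case only for the sake of anchoring the definition.

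Concretely, I would start from the right-hand side $(kw-w+2)\widetilde{E}_{k-1}(w)+w(1-w)\widetilde{E}'_{k-1}(w)$, substitute $\widetilde{E}_{k-1}=kE_{k-1}+(1-w)E_{k-1}'$ (using the defining formula with index $k-1$, so the factor is $k$, not $k+1$), differentiate that expression to get $\widetilde{E}'_{k-1}=(k-1)E_{k-1}'+(1-w)E_{k-1}''$ (the $-E_{k-1}'$ from differentiating $(1-w)$ combines with $kE_{k-1}'$), and expand everything in terms of $E_{k-1},E_{k-1}',E_{k-1}''$. On the other side I would expand $\widetilde{E}_k=(k+1)E_k+(1-w)E_k'$, using (\ref{eulerianpolynomials}) for $E_k$ and its derivative
\[
E_k'(w)=(k-1)E_{k-1}(w)+(kw-w+1)E_{k-1}'(w)+(1-2w)E_{k-1}'(w)+w(1-w)E_{k-1}''(w)
\]
(again watching the cancellation of $E_{k-1}'$ terms), to reduce it to the same basis $\{E_{k-1},E_{k-1}',E_{k-1}''\}$. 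Then I would match coefficients of $E_{k-1}$, of $E_{k-1}'$, and of $E_{k-1}''$ on the two sides and check they agree as polynomials in $w$.

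The main obstacle is purely bookkeeping: keeping the factors $(kw-w+1)$ from the Eulerian recurrence, $(kw-w+2)$ from the target recurrence, and the various $(1-w)$, $(1-2w)$, $w(1-w)$ weights straight, and making sure each differentiation picks up the right lower-order term. There is no conceptual difficulty and no estimate to control; the risk is an arithmetic slip, so I would organize the computation by collecting the $E_{k-1}''$-coefficient first (it should be $w(1-w)^2$ on both sides, which is a quick sanity check), then the $E_{k-1}'$-coefficient, then the $E_{k-1}$-coefficient, and verify each of the three identities separately. If all three match, the recurrence follows, and together with the base case $\widetilde{E}_0(w)=1$ this determines the sequence $\widetilde{E}_k(w)$, completing the proof.
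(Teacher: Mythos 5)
Your proposal is correct and takes essentially the same route as the paper: a direct algebraic verification using $\widetilde{E}_{k-1}=kE_{k-1}+(1-w)E'_{k-1}$, its derivative, and the differentiated Eulerian recurrence, with no genuine induction needed. The paper merely organizes the same computation by substituting $w(1-w)E''_{k-1}$ back via $E_k'$ to reassemble $(k+1)E_k+(1-w)E_k'$, whereas you compare coefficients in the basis $\{E_{k-1},E'_{k-1},E''_{k-1}\}$ (which indeed match: $k(kw-w+2)$, $2(1-w)(kw-w+1)$, and $w(1-w)^2$ on both sides), so your check goes through.
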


\begin{proof}
The right hand side is
\begin{multline*}
\mathrm{RHS}:
=(kw-w+2)\left[k E_{k-1}(w)+(1-w)E'_{k-1}(w)\right]\\
+w(1-w)\left[k E'_{k-1}(w)-E'_{k-1}(w)+(1-w)E''_{k-1}(w)\right]\\
=k(kw-w+2)E_{k-1}+2(1-w)(kw-w+1)E'_{k-1}(w)+w(1-w)^2 E''_{k-1}(w).
\end{multline*}
From (\ref{eulerianpolynomials}) we can derive
\[
w(1-w)E''_{k-1}(w)=E'_{k}(w)-(k-1)E_{k-1}(w)-(kw-3w+2)E'_{k-1}(w),
\]
which leads to
\begin{multline*}
\mathrm{RHS}=(k+1)(kw-w+1)E_{k-1}(w)+(k+1)w(1-w)E'_{k-1}(w)
+(1-w)E'_{k}(w)\\
=(k+1)E_{k}(w)+(1-w)E'_{k}(w)=\widetilde{E}_{k}(w),
\end{multline*}
by definition.
\end{proof}

It is easy to see that $\deg(E_k)=\deg(\widetilde{E}_k)=k-1$ for $k\ge1$.
Denote by $e(k,i)$, $\widetilde{e}(k,i)$ the coefficients of the polynomials
$E_k(w)$, $\widetilde{E}_{k}(w)$:
\[
E_k(w)=\sum_{k=0}^{n}e(k,i) w^i,\qquad
\widetilde{E}_k(w)=\sum_{k=0}^{n}\widetilde{e}(k,i) w^i.
\]
Then $e(0,0)=\widetilde{e}(0,0)=1$,
$e(0,i)=\widetilde{e}(0,i)=0$ for $i\ne0$, and
\[
e(k,i)=(k-i)e(k-1,i-1)+(k+1)e(k-1,i),
\]
\[
\widetilde{e}(k,i)=(k-i)\widetilde{e}(k-1,i-1)+(k+2)\widetilde{e}(k-1,i)
\]
for $0\le i\le k$, $k\ge1$.
The coefficients $e(k,i)$ are the classical (type $A$) \textit{Eulerian numbers}, namely $e(k,i)$ is the number of such permutations $\sigma\in\mathcal{S}_k$ that $\sigma$ has $i$ descents,  see entry $A123125$ in OEIS. The numbers $\widetilde{e}(k,i)$ are less known and appear in the work of Conger \cite{conger2010,conger2007thesis},
namely $\widetilde{e}(k,i)$ is the number of such permutations $\sigma\in\mathcal{S}_{k+2}$ that $\sigma(1)=2$ and $\sigma$ has $i+1$ descents, see also \cite{kril2019} and entry $A120434$ in OEIS.
It is well known, and proved by Frobenius in 1910, that all the roots of $E_k(w)$ are real, negative, and simple. In view of Theorem~5 in \cite{conger2010}, the same is true for the polynomials $\widetilde{E}_k(w)$.

Now we are ready to prove the main theorem of this section.

\begin{theorem}
For every integer $k\ge1$ there is a constant $C>0$ such that $r_n:=C n^k$ is the free cumulant
sequence for a certain probability distribution.
\end{theorem}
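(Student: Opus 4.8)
The plan is to exhibit $r_n=Cn^k$ as the free cumulants of $\mu(F_0^{\boxplus C})$ for a suitable $F_0\in\mathcal{F}$ and a sufficiently large $C>0$, and then to invoke Proposition~\ref{pro:rr0} and Theorem~\ref{thm:Ndist}. Since $\sum_{n\ge1}n^kw^n=wE_k(w)/(1-w)^{k+1}$, the $R$-transform $R_0(w):=\sum_{n\ge1}n^kw^n$ corresponds to
\[
F_0(w)=\frac{w(1-w)^{k+1}}{(1-w)^{k+1}+wE_k(w)}\in\mathcal{F},
\]
so that $P_0(w)=(1-w)^{k+1}$ and $Q_0(w)-P_0(w)=wE_k(w)$, which are coprime because $Q_0(1)=E_k(1)=k!\ne0$. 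First I would compute the characteristic polynomial $\chi_t(w)$ of $F_0^{\boxplus t}$ from \eqref{for:chart}. Using $P_0(w)+wP_0'(w)=(1-w)^k\big(1-(k+2)w\big)$ together with the identity
\[
\big(1-(k+2)w\big)E_k(w)-(1-w)\big(E_k(w)+wE_k'(w)\big)=-w\widetilde{E}_k(w),
\]
which is merely a rearrangement of the definition $\widetilde{E}_k=(k+1)E_k+(1-w)E_k'$, one obtains after factoring out $(1-w)^k$ that
\[
\chi_t(w)=(1-w)^k\,\psi_t(w),\qquad \psi_t(w):=(1-w)^{k+2}-t\,w^2\,\widetilde{E}_k(w).
\]
(For $k=1$ this reproduces the polynomial $\chi_t(w)$ of Example~\ref{ex:rnn1}.) Thus $F_0^{\boxplus C}\in\mathcal{F}_{\mathrm{rr}}^{0}$ as soon as the degree-$(k+2)$ polynomial $\psi_C(w)$ has only real roots, and in that case $r_n=Cn^k$ is the free cumulant sequence of $\mu(F_0^{\boxplus C})$ by Proposition~\ref{pro:rr0} and Theorem~\ref{thm:Ndist}.

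The heart of the matter is then the claim that $\psi_t$ is real-rooted for every sufficiently large $t>0$. This is where Conger's theorem enters: as recalled above, $\widetilde{E}_k(w)$ has $k-1$ real, negative, \emph{simple} roots $\gamma_1,\ldots,\gamma_{k-1}$, while $\widetilde{E}_k(0)=(k+2)!/2>0$. As $t\to\infty$ we have $\psi_t(w)/t\to-w^2\widetilde{E}_k(w)$ uniformly on compact sets, and $w^2\widetilde{E}_k(w)$ has the roots $0$ (double) and $\gamma_1,\ldots,\gamma_{k-1}$ (simple). By Rouch\'e's theorem, for $t$ large $\psi_t$ has exactly two roots in a small disc about $0$ and exactly one root in a small disc about each $\gamma_i$; since $\psi_t$ has real coefficients and these discs are symmetric about $\mathbb{R}$, the single root near each real $\gamma_i$ must be real. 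Rescaling by $w=u/\sqrt{t}$ gives $\psi_t(u/\sqrt{t})\to1-\widetilde{E}_k(0)u^2$, whose roots $\pm\widetilde{E}_k(0)^{-1/2}$ are real and simple, so the two roots of $\psi_t$ near $0$ are real as well. This accounts for $k+1$ real roots; as $\deg\psi_t=k+2$ and $\psi_t\in\mathbb{R}[w]$, the remaining root is forced to be real too, since non-real roots occur in conjugate pairs. Hence $\psi_t$, and therefore $\chi_t$, is real-rooted for all large $t$; fixing any such $t=:C$ finishes the proof.

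I expect the step of the previous paragraph—verifying that \emph{all} roots of $\psi_t$ become real as $t\to\infty$—to be the main obstacle, and in particular the use of \emph{simplicity} of the roots of $\widetilde{E}_k(w)$: if $\widetilde{E}_k$ had a multiple root $\gamma$, then Rouch\'e would only guarantee two roots of $\psi_t$ near $\gamma$, with no control on whether they are real, and the argument would collapse. It is precisely this simplicity that Conger's Theorem~5 supplies. (A slightly more computational alternative, avoiding Rouch\'e, is to analyse the real level sets $\{w\in\mathbb{R}:\ w^2\widetilde{E}_k(w)/(1-w)^{k+2}=1/t\}$: tracking the signs of $w^2\widetilde{E}_k(w)/(1-w)^{k+2}$ on the intervals cut out by its real zeros $\gamma_{k-1}<\cdots<\gamma_1<0$ and its pole at $w=1$, one produces at least $k+2$ real solutions for small $1/t>0$, which by the degree bound must be all of them.) Finally, although the statement only asks for existence of $C$, the same analysis of the critical values of $t$—where $\psi_t$ acquires a multiple root or drops in degree—should single out the least admissible constant, recovering $C=27/8$ for $k=1$ (Example~\ref{ex:rnn1}).
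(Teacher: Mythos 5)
Your construction is the same as the paper's: the same $F_0(w)=w(1-w)^{k+1}/\big((1-w)^{k+1}+wE_k(w)\big)$, the same factorization $\chi_t(w)=(1-w)^k\big((1-w)^{k+2}-t\,w^2\widetilde{E}_k(w)\big)$, and the same appeal to Conger's theorem that $\widetilde{E}_k$ has only real, negative, simple roots; the genuine difference lies in how you prove that $\psi_t(w)=(1-w)^{k+2}-tw^2\widetilde{E}_k(w)$ is real-rooted for large $t$. The paper works entirely on the real line: it compares $f(w)=(1-w)^{k+2}$ with $t\,g(w)=t\,w^2\widetilde{E}_k(w)$, chooses $t$ so that $f<t\,g$ at the relevant local maxima of $g$, and counts crossings interval by interval (with separate bookkeeping for $k$ even and $k$ odd) to obtain $k+2$ real solutions --- essentially the ``level set'' alternative you sketch in parentheses. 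Your main argument is instead asymptotic: Hurwitz/Rouch\'e localizes exactly one root of $\psi_t$ near each simple root $\gamma_i$ of $\widetilde{E}_k$ (real by conjugation symmetry), the rescaling $w=u/\sqrt{t}$ yields two real roots of order $t^{-1/2}$ emanating from the double root of $w^2\widetilde{E}_k(w)$ at $0$, and parity forces the remaining root of the degree-$(k+2)$ polynomial to be real. This is correct, and you rightly identify simplicity of the roots of $\widetilde{E}_k$ as the load-bearing input in either version (the paper's interlacing of the roots of $g$ and $g'$ needs it just as much as your Rouch\'e count does). Your route avoids the even/odd case analysis and is arguably cleaner, but it only asserts real-rootedness for ``$t$ sufficiently large'', whereas the paper's intermediate-value argument reduces the choice of $t$ to finitely many explicit inequalities $f(y_j)<t\,g(y_j)$, which is what one wants when estimating the optimal constants $C_k$ as in Examples~\ref{ex:rnn1} and~\ref{ex:euler2}. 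One harmless slip: from the definition, $\widetilde{E}_k(0)=(k+1)E_k(0)+E_k'(0)=(k+1)+(2^k-k-1)=2^k$, not $(k+2)!/2$ (the coefficient recurrence printed in the paper has a typo); since only $\widetilde{E}_k(0)>0$ is used, your limit polynomial $1-\widetilde{E}_k(0)u^2$ still has two simple real roots and the argument is unaffected.
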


\begin{proof}
Set
\[
R(w):=\sum_{n=1}^{\infty} n^k w^n=\frac{w E_k(w)}{(1-w)^{k+1}}.
\]
Then
\[
F(w)=\frac{w(1-w)^{k+1}}{(1-w)^{k+1}+w E_k(w)}
\]
and
\[
F^{\boxplus t}(w)=\frac{w(1-w)^{k+1}}{(1-w)^{k+1}+tw E_k(w)}.
\]
The characteristic polynomial for $F^{\boxplus t}$ is
\begin{multline*}
\chi_t(w)=(1-w)^{2k+2}-tw^2(1-w)^k\left[(k+1)E_{k}(w)+(1-w)E_{k}'(w)\right]\\
=(1-w)^{2k+2}-tw^2(1-w)^k \widetilde{E}_{k}(w)\\
=(1-w)^k\left((1-w)^{k+2}-tw^2\widetilde{E}_{k}(w)\right).
\end{multline*}
Now we are going to show, that if $t$ is large enough then $\chi_t(w)$
has only real roots, consequently, $F^{\boxplus t}\in\mathcal{F}_{\mathrm{rr}}^{0}$. It suffices to show, that for large $t>0$ the equation
\[
(1-w)^{k+2}=tw^2 \widetilde{E}_{k}(w)
\]
has $k+2$ real solutions.

Put $f(w):=(1-w)^{k+2}$, $g(w):=w^2 \widetilde{E}_{k}(w)$
and let $x_j$ and $y_j$ denote the roots of $g(w)$ and $g'(w)$, respectively, and
\[
x_1<y_1<x_2<y_2<\ldots<x_{k-1}<y_{k-1}<x_{k}=y_k=0.
\]

If $k=2r$ is even then
$g(w)$ has local maximum at $y_{2j-1}$, local minimum at $y_{2j}$, $1\le j\le r$, and $g(y_{2j-1})>0$. Choose $t>0$ such that $f(y_{2j-1})<t g(y_{2j-1})$, $1\le j\le r$. Then the functions $f(w)$ and $tg(w)$ cross at some point in $(x_{2j-1},y_{2j-1})$, and in $(y_{2j-1},x_{2j})$. Also they cross at some point in $(0,1)$.
Finally, since $\deg f>\deg g$, these functions cross at some point in $(1,+\infty)$, which completes the proof in this case.

Now assume that $k=2r+1$ is odd.
The polynomial $g(w)=w^2\widetilde{E}_k(w)$ has minimum at $y_1,y_3,\ldots,y_{k-2},y_{k}=0$ and maximum at $y_2,y_{4},\ldots,y_{2r}$. Put $y_0:=x_1-1$ and choose $t>0$ such that $f(y_{2j})<t\cdot g(y_{2j})$, $0\le j\le r$. Then the functions $f(w), t\cdot g(w)$ must cross at some point in each of the intervals $(x_{2j},y_{2j})$ and $(y_{2j},x_{2j+1})$, $j=1,2,\ldots,r$.
Moreover they must cross at some point in $(0,1)$. Also, they must cross in $(y_0,x_1)$. Finally, since $f(y_0)<t\cdot g(x_0)$ and $\deg f>\deg g$, these function must cross somewhere on $(-\infty,y_0)$. This completes the proof.
\end{proof}

Let $C_k$ denote the smallest constant such the $r_n:=C_k n^k$ is the free cumulant sequence for a certain probability distribution. In Example~\ref{ex:rnn1} we found that $3<C_1\le 27/8$. Now we will estimate $C_2$.

\begin{example}\label{ex:euler2}
For $r_n:=n^2$ we have
\[
R(w)=\sum_{n=1}^{\infty}n^2 w^n=\frac{w(1+w)}{(1-w)^3},
\]
so that
\[
F(w)=\frac{w(1-w)^3}{1-2w+4w^2-w^3}.
\]
The corresponding sequence $s_n(F)$:
\[
1, 1, 5, 22, 109, 576, 3174, 18047, 105093, 623608, 3757124,\ldots,
\]
is not positive definite: $\det\left(s_{i+j}(F)\right)_{i,j=0}^{4}=-685964$.
Moreover, we have also
\[
\det\left(s_{i+j}(F^{\boxplus 2})\right)_{i,j=0}^{5}<0,\quad
\det\left(s_{i+j}(F^{\boxplus 3})\right)_{i,j=0}^{7}<0,\quad
\det\left(s_{i+j}(F^{\boxplus 4})\right)_{i,j=0}^{10}<0,
\]
\[\det\left(s_{i+j}(F^{\boxplus 5})\right)_{i,j=0}^{17}<0,\quad\hbox{and}\quad
\det\left(s_{i+j}(F^{\boxplus 6})\right)_{i,j=0}^{41}<0.
\]

The characteristic polynomial for$F^{\boxplus t}(w)$ is
\[
\chi_{t}(w)=(1-w)^2\left(1-4w+6w^2-4t w^2-4w^3-2t w^3+w^4\right).
\]
The only solution of: $\chi_{t}(w)=\chi_{t}'(w)=0$, with $t>0$, is
\[
t_0=\frac{165\sqrt{33}-117}{128}\approx 6.49104,\qquad
w_0=\frac{\sqrt{33}-7}{2},
\]
and then
\[
\chi_{t_0}(w)=
\frac{(1-w)^2}{256}\left(\sqrt{33}-7 - 2 w\right)^2
\left(82 +14 \sqrt{33} - \left(587 + 101 \sqrt{33}\right) w + 64 w^2\right),
\]
so that all the roots of $\chi_{t_0}(w)$ are real. Therefore $F^{\boxplus t_0}\in\mathcal{F}_{\mathrm{rr}}^{0}$ and hence there exists a probability distribution for which the free cumulant sequence is
\[
r_n=\frac{165\sqrt{33}-117}{128}\cdot n^2.
\]
\end{example}

\section{Some examples of monotone convolution}\label{sec:monotone}

Now we will examine some examples of monotone convolutions. We will see that monotone convolution of two freely infinitely divisible can be no longer freely infinitely divisible. We particular, we will encounter some further elements of
$\mathcal{F}_{\mathrm{rr}}\setminus\mathcal{F}_{\mathrm{rr}}^{0}$ and some new examples of free deconvolution.

\begin{example}\label{ex:monotwmp}
Take $\mu:=\mathrm{W}(t)\rhd\mathrm{MP}(v,s)$, $s,t>0$, $v\in\mathbb{R}\setminus\{0\}$.
Then
\begin{align*}
F_{\mu}(w)&=F_{\mathrm{MP}(v,s)}\left(F_{\mathrm{W}(t)}(w)\right)
=\frac{w(1-v w+t w^2)}{(1+tw^2)(1-vw+svw+tw^2)},\\
\chi_{\mu}(w)&=\left(1-tw^2\right)\left(1-vw-\sqrt{s}vw+tw^2\right)
\left(1-vw+\sqrt{s}vw+tw^2\right),\\
R_{\mu}(w)&=s v w+tw^2+\frac{sv^2 w^2}{1-v w+tw^2}.
\end{align*}
Hence $\mu\in\mathcal{M}_{\mathrm{rr}}^{0}$ whenever  $4t\le\left(1-\sqrt{s}\right)^2 v^2$, otherwise $\mu\in\mathcal{M}_{\mathrm{rr}}\setminus\mathcal{M}_{\mathrm{rr}}^{0}$.
If $v^2>4t$ then, writing $1-v w+tw^2=(1-v_1 w)(1-v_2 w)$, with $v_1<v_2$,
$v_1+v_2=v$, $v_1 v_2=t$, we obtain
\[
R_{\mu}(w)=s(v_1+v_2)w+v_1 v_2 w^2+\frac{s(v_1+v_2)^2}{v_1(v_1-v_2)}\cdot\frac{v_1 w}{1-v_1 w}
+\frac{s(v_1+v_2)^2}{v_2(v_2-v_1)}\cdot\frac{v_2 w}{1-v_2 w}.
\]
Since $v_1 v_2=t>0$, one of the coefficients
\begin{equation}\label{for:c1c2}
c_1:=\frac{s(v_1+v_2)^2}{v_1(v_1-v_2)},\qquad
c_2:=\frac{s(v_1+v_2)^2}{v_2(v_2-v_1)}
\end{equation}
is positive, one is negative, therefore $\mu$ is not freely infinitely divisible but is
freely quasi infinitely divisible. In another words, for $v_1,v_2\in\mathbb{R}$, with
$v_1<v_2$, $v_1 v_2>0$, $s>0$, we can write symbolically:
\begin{equation}
\mathrm{W}(v_1 v_2)\rhd\mathrm{MP}(v_1+v_2,s)
=\delta_{s(v_1+v_2)}\boxplus\mathrm{W}(v_1 v_2)
\boxplus\mathrm{MP}(v_1,c_1)\boxplus\mathrm{MP}(v_2,c_2),
\end{equation}
where $c_1,c_2$ are given by (\ref{for:c1c2}).
\end{example}

\begin{example}\label{ex:monotmpw}
Now take $\mu:=\mathrm{MP}(v,s)\rhd\mathrm{W}(t)$, $s,t>0$, $v\in\mathbb{R}\setminus\{0\}$. Then
\begin{multline*}
F_{\mu}(w)=F_{\mathrm{W}(t)}\left(F_{\mathrm{MP}(v,s)}(w)\right)\\
=\frac{w (1 - v w) \big(1 - v(1-s) w\big)}
{1-2v(1-s)w+tw^2+v^2(1-s)^2w^2-2tvw^3+tv^2w^4}
\end{multline*}
and
\begin{multline*}
\chi_{\mu}(w)=\left(1-2vw+v^2(1-s)w^2\right)\left(1-\sqrt{t}w - v(1-s) w  + \sqrt{t} v w^2\right)\times\\
\times\left(1+\sqrt{t}w - v(1-s) w  - \sqrt{t} v w^2\right).
\end{multline*}
One can check that $\mu\in\mathcal{M}_{\mathrm{rr}}^{0}$
if an only if \[2|v|(1+s)\sqrt{t}\le t+v^2(1-s)^2,\]
otherwise $\mu\in\mathcal{F}_{\mathrm{rr}}\setminus\mathcal{F}_{\mathrm{rr}}^{0}$.

For $s=1$ we have
\[
R_{\mu}(w)=t w^2 - t v w^3 +\frac{vw}{1-vw},
\]
so if $27v^2\le t$ then, by Proposition~\ref{prop:rpolynomial3}, there is $\nu\in\mathcal{M}_{\mathrm{ratio}}$ such that $R_{\nu}(w)=t w^2 - t v w^3,$
and then $\mu=\nu\boxplus\mathrm{MP}(v,1)$.

Now assume that $0<s\ne1$.
Then the $R$-transform can be decomposed as
\[
R_{\mu}(w)=\frac{stw}{(1-s)^2 v}+\frac{tw^2}{1-s}+\frac{svw}{1-vw}
+\frac{st}{(s-1)^3v^2}\cdot\frac{v(1-s)w}{1-v(1-s)w},
\]
so accordingly we can write
\begin{equation}
\mu=\delta_{c_0}\boxplus\mathrm{W}(c_1)\boxplus\mathrm{MP}(v,s)\boxplus\mathrm{MP}(v(1-s),c_2),
\end{equation}
with
\begin{equation}
c_0=\frac{st}{(1-s)^2 v},\quad
c_1=\frac{t}{1-s},\quad
c_2=\frac{st}{(s-1)^3v^2}.
\end{equation}
Note that $c_1\cdot c_2<0$.
\end{example}

For example, the moment sequence of $\mathrm{MP}(1,1)\rhd\mathrm{W}(1)$
is $A242566$.

\begin{example}\label{ex:monotmpmp}
Now let us examine $\mu:=\mathrm{MP}(u,s)\rhd\mathrm{MP}(v,t)$.
Then
\[
F_{\mu}(w)=\frac{w(1-uw)\left(1-u(1-s)w-vw+uvw^2\right)}
{\big(1-u(1-s)w\big)\big(1-u(1-s)w-v(1-t)w+uv(1-t)w^2\big)},
\]
\begin{multline*}
\chi_{\mu}(w)=\left(1-u(1-\sqrt{s})w\right)\left(1-u(1+\sqrt{s})w\right)\times\\
\times\left(1-u(1-s)w-v(1-\sqrt{t})w+uv(1-\sqrt{t})w^2\right)\times\\
\times\left(1-u(1-s)w-v(1+\sqrt{t})w+uv(1+\sqrt{t})w^2\right),
\end{multline*}
\[
R_{\mu}(w)=\frac{suw}{1-uw}+\frac{tvw+tuv(s-1)w^2}{1-(u+ v-su)w+uv w^2}.
\]
If $(u-su+v)^2>4uv$ then we can write
\[
R_{\mu}(w)=\frac{suw}{1-uw}+\frac{a_{-} u_{-} w}{1-u_{-} w}+\frac{a_{+} u_{+} w}{1-u_{+} w},
\]
where
\[
u_{\mp}=\frac{1}{2}\left(u-su+v\mp\sqrt{(u-su+v)^2-4uv}\right)
\]
so that $u_{-}+u_{+}=u-su+v$, $u_{-} u_{+}=uv$ and $u_{-}<u_{+}$,
and
\[
a_{\mp}=t\frac{(1-s)\sqrt{(u-su+v)^2-4uv}\pm (1-s)^2 u\mp(1+s)v}{2\sqrt{(u-su+v)^2-4uv}}.
\]
Since
\[
(1-s)^2\left(u-su+v)^2-4uv\right)-\left((1-s)^2 u-(1+s)v\right)^2=-4sv^2<0,
\]
we have $a_{-}\cdot a_{+}<0$ and in this case
\[
\mu=\mathrm{MP}(u,s)\boxplus\mathrm{MP}(u_{-},a_{-})\boxplus\mathrm{MP}(u_{+},a_{+}),
\]
with one of $a_{-},a_{+}$ positive, one negative.
\end{example}

\begin{example}\label{ex:monotww}
In the case $\mu:=\mathrm{W}(s)\rhd\mathrm{W}(t)$ we have
\begin{align*}
F_{\mu}(w)&=\frac{w(1+sw^2)}{1+2sw^2+tw^2+s^2 w^4},\\
R_{\mu}(w)&=sw^2+\frac{tw^2}{1+sw^2},\\
\intertext{so that $\mu$ is a symmetric distribution, and}
\chi_{F}(w)&=\left(1-sw^2\right)\left(1-\sqrt{t}w+sw^2\right)\left(1+\sqrt{t}w+sw^2\right).
\end{align*}
Therefore $\mu\in\mathcal{M}_{\mathrm{rr}}^{0}$ if and only if $t\ge4s$.
For
\[
R_1(w)=\frac{tw^2}{1+sw^2}
\]
the corresponding element of $\mathcal{F}$ is
\[
F_1(w)=\frac{w(1+sw^2)}{1+sw^2+tw^2},
\]
and $F_1\in\mathcal{F}_{\mathrm{rr}}^{0}$ if and only if $t\ge8s$.
The distribution $\mu_1$ corresponding to $s=1,t=8$ is described in Example~\ref{ex:poly}.
In particular,
\[
\mathrm{W}(1)\rhd\mathrm{W}(8t)=\mathrm{W}(1)\boxplus\mu_1^{\boxplus t}
\]
for $t\ge1$.

For the distribution $\mu:=\mathrm{W}(1)\rhd\mathrm{W}(1)$, with $A007852$ as the even moment sequence $s_{2n}(\mu)$, the density function was computed in~\cite{crismale2020}.
\end{example}

Note that in these examples the resulting distribution $\mu$ is not freely infinitely divisible.
Let us also write for the record that for $t>0$, $u,v\ne0$, $u\ne v$ we have
\begin{align}
\delta_{u}\rhd\mathrm{W}(t)&=\delta_{(t+u^2)/u}\boxplus\mathrm{MP}\left(-u,t/u^2\right),\\
\delta_{u}\rhd\mathrm{MP}(v,t)&=\delta_{u(v-u-tv)/(v-u)}\boxplus\mathrm{MP}\left(v-u,tv^2/(v-u)^2\right),\\
\delta_{u}\rhd\mathrm{MP}(u,t)&=\delta_{(1+t)u}\boxplus\mathrm{W}\left(tu^2\right),
\end{align}
and that for an arbitrary distribution $\mu$ and for $u\in\mathbb{R}$ we have $\mu\rhd\delta_u=\mu\boxplus\delta_u$, the translation of $\mu$ by $u$.

\section{Polynomials in $\mathcal{F}$}\label{sec:polynomials}

In this section we will study polynomials as elements of $\mathcal{F}$, i.e. the case when $Q(w)=1$, so that $F(w)=wP(w)$,
\begin{align*}
R(w)&=\frac{1-P(w)}{P(w)},\\
F^{\boxplus t}(w)&=\frac{w P(w)}{P(w)+t(1-P(w))},\\
\intertext{and the characteristic polynomial of $F^{\boxplus t}(w)$ is}
\chi_t(w)&=P(w)^2+t\left[P(w)+w P'(w)-P(w)^2\right].
\end{align*}
This class was thoroughly studied in \cite{LM2020}. Here we will also study free powers of such functions.

\subsection{The case $\deg P(w)=1$}
If $P(w)=1-aw$, with $a\ne0$, then
\[
R(w)=\frac{aw}{1-aw},
\]
so that $F^{\boxplus t}\in\mathcal{F}_{\mathrm{rr}}^{0}$ for every $t>0$ and $\mu\left(F^{\boxplus t}\right)=\mathrm{MP}(a,t)$.

\subsection{The case $\deg P(w)=2$}
Take $P(w)=1+a w+b w^2$, $Q(w)=1$.
It is known \cite{LM2020} that in this case $F\in\mathcal{F}_{\mathrm{dist}}$ if and only if $a^2\ge3b$.
The characteristic polynomial for $F^{\boxplus t}$ is
\[
\chi_t(w)=1 + 2 a w + (a^2-a^2t+2b+bt)w^2 + 2 a b(1-t)w^3 + b^2(1-t)w^4.
\]

From now on we substitute:
\[
a:=a(u,v)=\frac{(2 + u)v}{1 + u + u^2},\qquad
b:=b(u,v)=\frac{v^2}{1 + u + u^2},\qquad
t:=u^3,
\]
which leads to
\[
\chi_t(w)=\frac{1}{1+u+u^2}(1+vw)^2\left(1+u+u^2+2v(1-u^2)w+v^2(1-u)w^2\right).
\]
If either $u\le-2$ or $u\ge1$ then all the roots of $\chi_{t}(w)$ are real.
Note that
\[
\frac{b(u,v)}{a(u,v)^2}=\frac{1+u+u^2}{(2+u)^2}
\]
increases with $u\ge1$ and
\[
\left\{b(u,v)/a(u,v)^2:u\ge1\right\}=[1/3,1),
\]
which means that if $a^2/3<b<a^2$ then $F\notin\mathcal{F}_{\mathrm{dist}}$
but $F^{\boxplus t}\in\mathcal{F}_{\mathrm{rr}}^0$ for certain $t>1$.

For the case $a^2=b$, say $a=b=1$, the moments $s_n\left(F^{\boxplus t}\right)$ of $F^{\boxplus t}$
are
\[
1, -t, t^2, t - t^3, -t - 4 t^2 + t^4,\ldots,
\]
and $\det(s_{i+j})_{i,j=0}^{2}=-t^2$, therefore $F^{\boxplus t}\notin\mathcal{F}_{\mathrm{dist}}$ for $t\ne0$.

Now let us now consider $u<-2$.
Then the function $(1+u+u^2)/(2+u)^2$ is increasing on $(-\infty,-2)$
and $\left\{b(u,v)/a(u,v)^2:u<-2\right\}=(1,+\infty)$.
It implies, that if $b>a^2>0$ then there exists $t<-8$ such that $F^{\boxplus t}\in\mathcal{F}_{\mathrm{rr}^{0}}$.

\begin{figure}[htp]
    \centering
    \includegraphics[width=10cm]{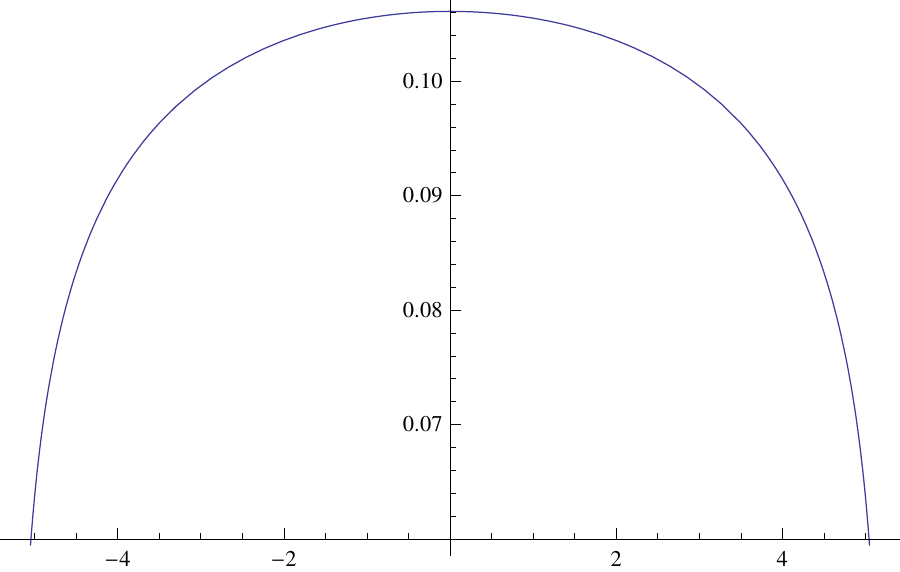}
    \caption{The density function of $\mu(F_1)$ for $F_1(w)=w(1+w^2)/(1+9w^2)$, see Example~\ref{ex:poly}}
    \label{fig:density}
\end{figure}

\begin{example}\label{ex:poly}
Let us examine in details the case $u=-2$, for convenience take $v=\sqrt{3}$,
so that $a=0$, $b=1$, $t=-8$. Then
\[
F(w)=w+w^3\qquad\hbox{and}\qquad
F_1(w):=F^{\boxplus -8}(w)=\frac{w(1+w^2)}{1+9w^2}.
\]
Then $\chi_{F_1}(w)=\left(1-3w^2\right)^2$, $R_1(w)=8w^2/\left(1+w^2\right)$, and the upshifted moment generating function $D_1(z)$ corresponding to $F_1$ is
\begin{equation}
D_1(z)=\frac{\sqrt{3}}{3}\cdot\frac{\sqrt[3]{1+z\sqrt{27}}-\sqrt[3]{1-z\sqrt{27}}}{\sqrt[3]{1+z\sqrt{27}}+\sqrt[3]{1-z\sqrt{27}}}.
\end{equation}
The distribution $\mu(F_1)$ is absolutely continuous with the density function
\begin{equation}
\frac{1}{\pi}\cdot\frac{1}{\sqrt[3]{\frac{\sqrt{27}+x}{\sqrt{27}-x}}+1+\sqrt[3]{\frac{\sqrt{27}-x}{\sqrt{27}+x}}}
\end{equation}
for $x^2\le27$, see Figure~\ref{fig:density}. This measure is symmetric, so the odd moments $s_{2n+1}(F_1)$ are all $0$,
the sequence of even moments $s_{2n}(F_1)$ is:
\[
1, 8, 120, 2184, 43768, 929544, 20524920, 466043784, 10807262712,
\ldots.
\]
\end{example}

\subsection{The case $\deg P(w)=3$}
Assume that $P(w)=1+aw+bw^2+cw^3$, $Q(w)=1$, so that $F(w)=wP(w)$.
By (\ref{for:necessary}) a necessary condition for $F$ to be in $\mathcal{F}_{\mathrm{dist}}$ is $b\le a^2$, see Figure~\ref{fig:deg3}.
Then
\[
\chi_{F}(w)=1+2aw+3bw^2+4cw^3.
\]
The following proposition is a reformulation of Theorem~7.2 in \cite{LM2020} in our notation.

\begin{proposition}
The function $F(w)=w(1+aw+bw^2+cw^3)$ belongs to $\mathcal{F}_{\mathrm{rr}}^{0}$ if and only if
\[
9a^2 b^2-27b^3-32a^3 c+108abc-108c^2\ge0.
\]
\end{proposition}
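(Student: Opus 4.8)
The plan is to reduce membership in $\mathcal{F}_{\mathrm{rr}}^{0}$ to nonnegativity of the discriminant of a cubic. Since $Q(w)=1$, the characteristic polynomial of $F$ is the one already recorded above,
\[
\chi_{F}(w)=1+2aw+3bw^{2}+4cw^{3},
\]
and in the present case $\deg P(w)=3$ forces $c\neq0$, so $\chi_{F}(w)$ is a genuine cubic with real coefficients. By definition, $F\in\mathcal{F}_{\mathrm{rr}}^{0}$ precisely when all three roots of $\chi_{F}(w)$ are real. I would invoke the classical criterion that a real cubic $\alpha_{3}x^{3}+\alpha_{2}x^{2}+\alpha_{1}x+\alpha_{0}$ with $\alpha_{3}\neq0$ has all its roots real (counted with multiplicity) if and only if its discriminant
\[
\Delta=18\,\alpha_{0}\alpha_{1}\alpha_{2}\alpha_{3}-4\alpha_{1}^{3}\alpha_{3}+\alpha_{1}^{2}\alpha_{2}^{2}-4\alpha_{0}\alpha_{2}^{3}-27\alpha_{0}^{2}\alpha_{3}^{2}
\]
is nonnegative, the roots being moreover distinct exactly when $\Delta>0$.

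Next I would substitute $\alpha_{3}=4c$, $\alpha_{2}=3b$, $\alpha_{1}=2a$, $\alpha_{0}=1$ into this formula and simplify; this is routine polynomial algebra and gives
\[
\Delta=4\left(9a^{2}b^{2}-27b^{3}-32a^{3}c+108abc-108c^{2}\right).
\]
Consequently $\chi_{F}(w)$ has only real roots if and only if $9a^{2}b^{2}-27b^{3}-32a^{3}c+108abc-108c^{2}\ge0$, which is the assertion. (Alternatively, one may simply observe that this proposition is Theorem~7.2 of \cite{LM2020} transcribed into the present notation, but the discriminant computation makes it self-contained.)

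The only genuine work is verifying the discriminant identity, and the only thing to guard against is a sign or normalisation error. As a sanity check I would let $c\to0$: the expression then becomes $9b^{2}(a^{2}-3b)$, which for $b\neq0$ has the same sign as the discriminant $4(a^{2}-3b)$ of the quadratic $1+2aw+3bw^{2}$, while for $b=0$ it vanishes and $\chi_{F}(w)=1+2aw$ trivially has only real roots. This matches the degree~$2$ result recalled earlier and confirms both the formula and the consistency of the boundary between the degree~$2$ and degree~$3$ cases.
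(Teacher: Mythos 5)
Your proof is correct: the arithmetic checks out (with $\alpha_0=1$, $\alpha_1=2a$, $\alpha_2=3b$, $\alpha_3=4c$ the discriminant formula indeed yields $4\left(9a^2b^2-27b^3-32a^3c+108abc-108c^2\right)$), and the criterion ``all roots of a real cubic are real iff $\Delta\ge0$'' is the right one. Your route differs slightly from the paper's: the paper does not invoke the discriminant, but instead computes the critical points $w_{\mp}=\bigl(-3b\mp\sqrt{3(3b^2-8ac)}\bigr)/(12c)$ of $\chi_F$ and uses the elementary criterion that the cubic is real-rooted iff $\chi_F(w_-)\chi_F(w_+)\le0$, finding this product equal to $\bigl(-9a^2b^2+27b^3+32a^3c-108abc+108c^2\bigr)/(108c^2)$ --- which is just $-\Delta/(432c^2)$, so the two arguments are algebraically the same fact packaged differently. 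What your version buys is that it avoids the explicit critical points and any case discussion about whether $3b^2-8ac\ge0$ (a point the paper glosses over), at the price of citing the standard discriminant sign criterion rather than deriving real-rootedness from the shape of the cubic; both versions, yours and the paper's, tacitly use $c\ne0$ (you via $\deg P=3$, the paper by dividing by $c$), and your limiting check $c\to0$ giving $9b^2(a^2-3b)$ is a nice consistency verification with the degree-two case.
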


\begin{proof}
The roots of $\chi_{F}'(w)$ are
\[
w_{\mp}=\frac{-3b\mp\sqrt{3(3b^2-8ac)}}{12c}
\]
and $\chi_{F}(w)$ has only real roots if and only if $\chi_{F}(w_{-})\chi_{F}(w_{+})\le0$. Since
\[
\chi_{F}(w_{-})\cdot\chi_{F}(w_{+})=\frac{-9a^2 b^2+27b^3+32a^3 c-108abc+108c^2}{108c^2},
\]
the statement holds.
\end{proof}

\begin{figure}[htp]
    \centering
    \includegraphics[width=10cm]{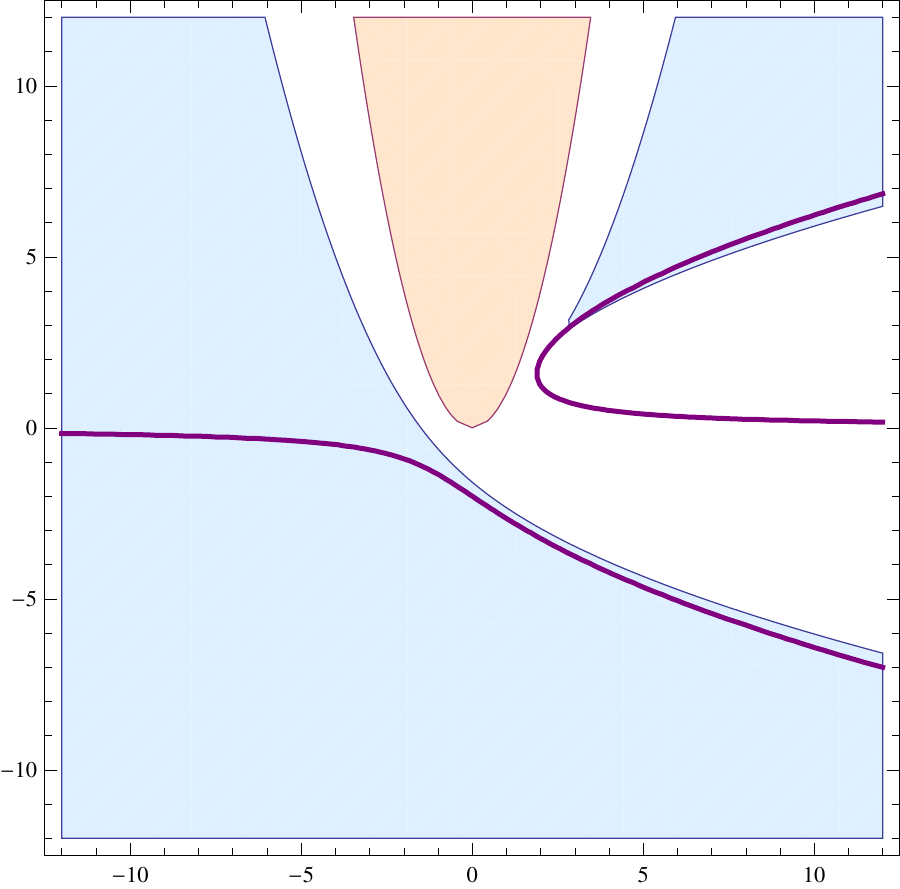}
    \caption{For $F(w)=w(1+aw+bw^2+w^3)$ the blue area corresponds to these $(a,b)$ that $F\in\mathcal{F}_{\mathrm{rr}}^{0}$. If $b>a^2$, the points above the parabola, then $F\notin\mathcal{F}_{\mathrm{dist}}$. The part of the thick purple line which is outside the blue area corresponds to $F\in\mathcal{F}_{\mathrm{rr}}\setminus\mathcal{F}_{\mathrm{rr}}^{0}$.}
    \label{fig:deg3}
\end{figure}

Now let us examine monotone convolutions within the class of polynomials of order~3.

\begin{example}
Take $F_1(w):=w(1+uw)$, $F_2(w):=w(1+vw)$ and $F(w):=F_2\left(F_1(w)\right)$, $u,v\in\mathbb{R}$, $u,v\ne0$. Then
\begin{align*}
F(w)&=w(1+uw)\left(1+vw+uvw^2\right),\\
\chi_{F}(w)&=(1+2uw)\left(1+2vw+2uvw^2\right),\\
R(w)&=\frac{-uw}{1+uw}-\frac{vw}{1+vw+uvw^2}.
\end{align*}
(This is in fact a special case of Example~\ref{ex:monotmpmp}, with $s=t=1$.)
In this way we obtain all $F(w)=w(1+aw+bw^2+cw^3)$ with $a,b,c\in\mathbb{R}$, $c\ne0$ and $b^3+8c^2=4abc$, see the thick purple line on Figure~\ref{fig:deg3}.
If $v^2<2uv$ (i.e. if $b>0$ and $b^3<16b^2$) then $F\in\mathcal{F}_{\mathrm{rr}}\setminus\mathcal{F}_{\mathrm{rr}}^{0}$ and these are all elements of $\mathcal{F}_{\mathrm{rr}}\setminus\mathcal{F}_{\mathrm{rr}}^{0}$ within $\mathcal{F}(3,0)$,
see Proposition~7.3 in~\cite{LM2020}.

\end{example}

\section{Further examples}

In this part we provide three families of integer sequences that appear in OEIS and are of the form $s_n(F)$ for some $F\in\mathcal{F}_{\mathrm{rr}}^{0}$. In particular we encounter all sequences from Figure~11 in \cite{bostan2020},
A selection of examples where $F(w)$ is a polynomial is provided in~\cite{LM2020}.

\begin{example}
Let us examine moment sequences of distributions of the form
\[\mu=\mathrm{MP}(u,s)\boxplus\delta_{v},\]
the Marchenko-Pastur law $\mathrm{MP}(u,s)$, $s>0,u\ne0$ (see Section~\ref{sec:inf}), translated by $v\in\mathbb{R}$. Then
\begin{align*}
R_{\mu}(w)&=\frac{suw}{1-uw}+vw,\\
F_{\mu}(w)&=\frac{w(1-uw)}{1-uw+suw+vw-uvw^2},\\
\chi_{F}(w)&=1 - 2 u w + u^2 w^2 - s u^2 w^2,
\end{align*}
and $F\in\mathcal{F}_{\mathrm{rr}}^{0}$. Several examples of such sequences
can be found in OEIS:
$A000108$ for $(s,u,v)=(1,1,0)$ (Catalan numbers),
$A007317$ for $(1,1,1)$,
$A064613$ for $(1,1,2)$,
$A104455$ for $(1,1,3)$,
$A104498$ for $(1,1,4)$,
$A154623$ for $(1,1,5)$,
$A005043$ for $(1,1,-1)$ (Riordan numbers),
$A126930$ for $(1,1,-2)$,
$A168491$ for $(1,-1,0)$, (signed Catalan numbers),
$A099323$ for $(1,-1,1)$,
$A001405$ for $(1,-1,2)$,
$A005773$ for $(1,-1,3)$,
$A001700$ for $(1,-1,4)$,
$A026378$ for $(1,-1,5)$,
$A005573$ for $(1,-1,6)$,
$A122898$ for $(1,-1,7)$,
$A151374$ for $(1,2,0)$,
$A162326$ for $(1,2,1)$,
$A337168$ for $(1,2,-1)$,
$A060899$ for $(1,-2,4)$,
$A151318$ for $(1,-2,5)$,
$A005159$ for $(1,3,0)$,
$A337167$ for $(1,3,1)$,
$A337169$ for $(1,3,-1)$,
$A151403$ for $(1,4,0)$,
$A156058$ for $(1,5,0)$,
$A006318$ for $(2,1,0)$ (large Schr\"{o}der numbers),
$A174347$ for $(2,1,1)$,
$A052709$ for $(2,1,-1)$,
$A126087$ for $(2,-1,3)$,
$A151282$ for $(2,-1,4)$,
$A151090$ for $(2,-1,5)$,
$A225887$ for $(2,-1,6)$,
$A156017$ for $(2,2,0)$,
$A047891$ for $(3,1,0)$,
$A064641$ for $(3,1,-1)$,
$A129147$ for $(3,1,-2)$,
$A128386$ for $(3,-1,4)$,
$A151292$ for $(3,-1,5)$,
$A082298$ for $(4,1,0)$,
$A062992$ for $(4,1,-1)$,
$A129148$ for $(4,1,-2)$,
$A330800$ for $(4,-1,3)$,
$A121724$ for $(4,-1,5)$,
$A344558$ for $(4,-1,6)$,
$A330799$ for $(4,-1,7)$,
$A194723$ for $(4,-1,9)$,
$A082301$ for $(5,1,0)$,
$A128387$ for $(5,-1,6)$,
$A118376$ for $(1/2,2,1)$,
$A103210$ for $(3/2,2,0)$,
$A306519$ for $(1/2,2,-1)$,
$A151281$ for $(1/2,-2,3)$,
$A129637$ for $(1/2,-2,4)$,
$A151251$ for $(1/2,-2,5)$,
$A134425$ for $(1/2,-2,6)$,
$A103211$ for $(4/3,3,0)$,
$A000957$ for $(1/4,2,-1/2)$ (Fine numbers),
$A133305$ for $(5/4,4,0)$,
$A033321$ for $(1/4,2,1/2)$,
$A344507$ for $(1/4,2,-3/2)$,
$A033543$ for $(1/4,2,3/2)$,
$A054341$ for $(1/4,-2,5/2)$,
$A059738$ for $(1/4,-2,7/2)$,
$A049027$ for $(1/4,-2,9/2)$,
$A133306$ for $(6/5,5,0)$,
$A133307$ for $(7/6,6,0)$,
$A133308$ for $(8/7,7,0)$,
$A133309$ for $(9/8,8,0)$.

Putting $(s,u,v)=(1/k,k,0)$, $k=2,3,\ldots,11$, we get
$A001003$, $A007564$, $A059231$, $A078009$, $A078018$, $A081178$, $A082147$, $A082181$,
$A082148$, $A082173$, respectively.
\end{example}

\begin{example}
Now consider distributions of the form
\[
\mu=\mathrm{W}(s)\boxplus\delta_{u},
\]
the Wigner law with parameter $s>0$ (see Section~\ref{sec:inf}),
translated by $u\in\mathbb{R}$. Then
\[
R_{\mu}(w)=uw+sw^2,\qquad
F_{\mu}(w)=\frac{w}{1+uw+s w^2}.
\]
For $u=0$ we get the aerated sequence $\binom{2n+1}{n}s^n/(2n+1)$, in particular for $s=1,2,3,4,5,6,7,8,9,10$ we get the aerated sequence
$A000108$ (Catalan numbers, see also $A126120$),
$A151374$, $A005159$, $A151403$, $A156058$, $A156128$, $A156266$, $A156270$, $A156273$, $A156275$, respectively.

We also get:
$A001006$ for $(s,u)=(1,1)$,
$A000108$ for $(1,2)$,
$A002212$ for $(1,3)$,
$A005572$ for $(1,4)$,
$A182401$ for $(1,5)$,
$A025230$ for $(1,6)$,
$A025235$ for $(2,1)$,
$A071356$ for $(2,2)$,
$A001003$ (little Schr\"{o}der numbers) for $(2,3)$,
$A068764$ for $(2,4)$,
$A025237$ for $(3,1)$,
$A122871$ for $(3,2)$,
$A107264$ for $(3,3)$,
$A007564$ for $(3,4)$,
$A068765$ for $(3,6)$, 
$A091147$ for $(4,1)$,
$A129400$ for $(4,2)$,
$A003645$ for $(4,4)$,
$A059231$ for $(4,5)$,
$A068766$ for $(4,8)$,
$A091148$ for $(5,1)$,
$A249925$ for $(5,2)$,
$A107265$ for $(5,5)$,
$A078009$, $A127848$ for $(5,6)$,
$A068767$ for $(5,10)$,
$A091149$ for $(6,1)$,
$A269730$ for $(6,5)$,
$A107266$ for $(6,6)$,
$A078018$ for $(6,7)$,
$A068768$ for $(6,12)$,
$A217275$ for $(7,1)$,
$A081178$ for $(7,8)$,
$A068769$ for $(7,14)$,
$A090442$ for $(8,6)$,
$A082147$ for $(8,9)$,
$A068770$ for $(8,16)$,
$A132900$ for $(9,3)$,
$A101600$ for $(9,6)$,
$A082181$ for $(9,10)$,
$A068771$ for $(9,18)$,
$A068772$ for $(10,20)$.

For $s=\gamma(1+\gamma)$, $u=2\gamma+1$, the sequence $s_n(F)$ counts
dimensions of the $\gamma$-polytridendriform operad $\mathrm{TDendr}_\gamma$, see~\cite{giraudo2016}, which leads to $A001003$, $A269730$, $A269731$, $A269732$ for $\gamma=1,2,3,4$ respectively
\end{example}

\begin{example}
Take $F(w):=w(1+w)(1+2w+2w^2)$, then $\chi_{F}(w)=(1+2w)^3$.
In fact, the resulting distribution $\mu$ is the reflection of that corresponding to Example~\ref{ex:A048779}, see entry $A048779$ in OEIS.
Then the moment sequences of $\mu\boxplus\delta_{4}$ and $\mu\boxplus\delta_{6}$ are $A305608$ and $A151323$, respectively.
\end{example}


\end{document}